\documentclass[12pt, a4paper]{amsart}
\usepackage[a4paper, left=2cm, right=2cm, top=2cm, bottom=2cm]{geometry}
\usepackage{amssymb}
\usepackage{amsmath}
\usepackage{stmaryrd}
\usepackage{xcolor}
\makeatother
\usepackage{hyperref}
\def\a{\alpha}

\def\r{\rho}

\def\G{{\Gamma}}

\def\C{{\mathbb C}}
\def\R{{\mathbb R}}
\def\Z{{\mathbb Z}}
\def\Q{{\mathbb Q}}

\def\Ker{\mathop{\rm Ker}\nolimits}

\def\Coker{\operatorname{Coker}}
\def\Id{\operatorname{Id}}
\def\tr{\mathop{\rm tr}\nolimits}
\def\diag{\mathop{\rm diag}\nolimits}

\def\Tr{\operatorname{Tr}}

\def\ln{\operatorname{ln}}

\def\ord{\operatorname{ord}}
\def\T{{\mathbb T}}
\def\1{\mathbf 1}

 \def\rk{\operatorname{rk}}

\def\Coin{\operatorname{Coin}}

\newcommand{\ov}[1]{\overline{#1}}

\newcommand{\wh}[1]{\widehat{#1}}

\newtheorem{theorem}{Theorem}
\newtheorem{lemma}[theorem]{Lemma}
\newtheorem{example}[theorem]{Example}
\newtheorem{remark}[theorem]{Remark}

\DeclareMathOperator{\fix}{Fix}
\def\Fix{\operatorname{Fix}}

\def\N{{\mathbb N}}

\begin{document}

\title[The Reidemeister and the Nielsen numbers]
{The Reidemeister and the Nielsen numbers: growth rate, asymptotic behavior, dynamical zeta functions  and  the Gauss congruences }

\author{Alexander Fel'shtyn and Mateusz Slomiany}

\address{Alexander Fel'shtyn, Instytut Matematyki, Uniwersytet Szczecinski,
ul. Wielkopolska 15, 70-451 Szczecin, Poland}
\email{alexander.felshtyn@usz.edu.pl}

\address{Mateusz Slomiany, Instytut Matematyki, Uniwersytet Szczecinski,
ul. Wielkopolska 15, 70-451 Szczecin, Poland}
\email{mateusz.slomiany@phd.usz.edu.pl}

\subjclass[2010]{Primary 37C25; 37C30; 22D10;  Secondary 20E45; 54H20; 55M20}
\keywords{ Twisted conjugacy class,  Reidemeister coincidence number, coincidence Nielsen number, growth rate,
Gauss congruences}

\begin{abstract}
In the present paper, taking a dynamical point on view,  we    study the growth rate and   asymptotic behavior of the sequences  of  the Reidemeister numbers and the sequences  of  the Reidemeister and the Nielsen coincidence numbers. We also prove the Gauss congruences for  the sequence   $\{R(\varphi^n,\psi^n)\}$ of  the
Reidemeister coincidence numbers of the tame pair $(\varphi,\psi)$ of endomorphisms of a
torsion-free nilpotent group~$G$ of finite Pr\"ufer rank.
Furthermore, we prove the rationality of the Nielsen coincidence zeta function,
the Gauss congruences for  the sequence $\{N(f^n, g^n)\}$ of the Nielsen coincidence numbers
and show that the growth rate exists for the sequence  \{$N(f^n, g^n)\}$
of  tame pair of maps $(f,g)$ of a compact nilmanifold to itself.
\end{abstract}

\maketitle
\setcounter{tocdepth}{1}
\tableofcontents

\section{Introduction}\label{sec:intro}
In topological fixed point theory,  the Reidemeister and  the Nielsen numbers
arise as homotopy invariants associated to  a continuous
self-map of a connected compact polyhedron.  Passing to the
fundamental group of the polyhedron, the Reidemeister numbers admit an
algebraic treatment in terms of twisted conjugacy classes; compare
\cite{FelshB}.
We assume $X$ to be a connected, compact polyhedron and $f:X\rightarrow X$ to be a continuous map. Let $p:\tilde{X}\rightarrow X$ be the universal cover of $X$ and $\tilde{f}:\tilde{X}\rightarrow \tilde{X}$ a lifting
of $f$, {i.e.,} $p\circ\tilde{f}=f\circ p$.
 Two liftings $\tilde{f^\prime}$ and $\tilde{f}$  of $f$ are said to be \emph{conjugate} if there
 exists covering translation $ \gamma \in \Gamma\cong\pi_1(X)$, such that $\tilde{f^\prime}= \gamma \circ \tilde{f} \circ  \gamma^{-1}.$
Lifting classes are equivalence classes by conjugacy. Notation:
$
[\tilde{f}]= \{ \gamma \circ \tilde{f} \circ  \gamma^{-1}  |\gamma \in \Gamma \}.
$
The subset $p(\fix(\tilde{f}))\subset \fix(f)$ is called the \emph{fixed point class} of $f$ determined by the lifting class $[\tilde{f}]$.
 Our definition of a fixed point class is via the universal covering space. It essentially
 says: two fixed point of $f$ are in the same class iff there is a lifting $\tilde f $ of $f$ having
 fixed points above both of them. There is another way of saying this, which does not use covering space explicitly, hence is  very useful in identifying fixed point classes.
 Two fixed points $x_0$ and $x_1$ of $f$ belong to the same fixed point class iff
 there is a path $c$ from $x_0$ to $x_1$ such that $c \cong f\circ c $ (homotopy relative
    endpoints - ``Nielsen disk").
 This fact  can be considered as an equivalent definition (geometrical) of a non-empty fixed point class. Every map $f$  has only finitely many non-empty fixed point classes, each a compact
 subset of $X$.
 A fixed point class is called \emph{essential} if its index is nonzero. The number of lifting classes of $f$ (and hence the number of all fixed point classes) is called the \emph{Reidemeister number} of $f$, denoted {by} $R(f)$. This is a positive integer or infinity. The number of essential fixed point classes is called the \emph{Nielsen number} of $f$, denoted by $N(f)$.
The Nielsen number is always finite. $R(f)$ and $N(f)$ are homotopy invariants. In the category of compact, connected polyhedra the Nielsen number of a map is, apart from in certain exceptional cases, equal to the least number of fixed points of maps with the same homotopy type as $f$.

Let $f, g: X \to Y$ be two maps between two compact connected manifolds.
Let
$\Coin(f,g) := \{ x \in X \mid f(x) = g(x) \}$ be the set of the coincidence  points of $f$ and $g$.
Two points of coincidence $x_1,x_2 \in \Coin(f,g)$ are said to be \emph{Nielsen equivalent} if there is a path
$\alpha:[0,1] \to X$ from $x_1$ to $x_2$ such that $f \circ \alpha$ is homotopic to $g \circ \alpha$.
If $[C]$ denotes the fixed-endpoint homotopy class of $C$
then we can denote it by $[f \circ \alpha] = [g \circ \alpha]$.
Being Nielsen equivalent is obviously an equivalence relation.
We denote the set of the coincidence classes by $\overline{\Coin}(f,g)$.
Let $\{f_t\}$ be a homotopy of $f$ and let $\{g_t\}$ be a homotopy of $g$, $f_0 = f, g_0 = g$.
Two points of coincidence $x_1 \in \Coin(f,g)$ and $x_2 \in \Coin(f_1, g_1)$ are $\{f_t\},\{g_t\}$-related
if there exists a~path $C:[0,1] \to X$ with $C(0) = x_1, C(1) = x_2$ such that
$[\{f_t \circ C(t)\}] = [\{g_t \circ C(t)\}]$.
Two equivalence classes $\alpha \in \overline{\Coin}(f,g)$ and $\beta \in \overline{\Coin}(f_1,g_1)$
are $\{f_t\},\{g_t\}$-related if every point of coincidence in $\alpha$ is $\{f_t\},\{g_t\}$-related
to every point of coincidence in $\beta$.

A coincidence class $\alpha \in \overline{\Coin}(f,g)$ is \emph{essential}
if for any homotopies $\{f_t\}$ of $f$ and $\{g_t\}$ of $g$, there is a coincidence class
in $\overline{\Coin}(f_1,g_1)$ to which $\alpha$ is $\{f_t\},\{g_t\}$-related.
The finite number of essential coincidence classes in $\overline{\Coin}(f,g)$ is called
the \emph{Nielsen coincidence number} of $f,g$ and denoted by $N(f,g)$,
see also \cite{Wong}.

Let $G$ be a  group and $\varphi: G\rightarrow G$ an
endomorphism. Two elements $x,y\in G$ are said to be
$\varphi$-{\em conjugate} or {\em twisted conjugate,}
if and only if there exists $g \in G$ such that $ y=g  x   \varphi(g^{-1}).$
We will write $\{x\}_\varphi$ for the $\varphi$-{\em conjugacy} or
{\em twisted conjugacy} class
of the element $x\in G$.
The number of $\varphi$-conjugacy classes is called the {\em Reidemeister number}
of an  endomorphism $\varphi$ and is  denoted by $R(\varphi)$.
If $\varphi$ is the identity map then the $\varphi$-conjugacy classes are the usual
conjugacy classes in the group $G$.

We have the Reidemeister bijection between  fixed point classes, lifting classes and corresponding twisted conjugacy
classes. Therefore the
Reidemeister number of continuous map  $f$ coincides with Reidemeister number
(the number of twisted conjugacy classes) of induced endomorphism $f_*$ of the fundamental group of $X$: $R(f)=R(f_*)$.
In this paper we take mostly a group-theoretic point of view,
inspired by, but otherwise  independent of the topological
origins of the subject.

Let $G$ be a  group and $\varphi, \psi : G\rightarrow G$
two endomorphisms. Two elements $\alpha,\beta\in G$ are said to be $(\varphi, \psi)$\textit{-conjugate} if and only if there exists $g \in G$ with
$
\beta=\psi(g)  \alpha   \varphi(g^{-1}).
$
The number of $(\varphi,\psi$)-conjugacy classes is called the Reidemeister coincidence
number of   endomorphisms $\varphi $ and $\psi $, denoted by $ R(\varphi,\psi)$. If $\psi$ is the identity map then the $(\varphi,id)$-conjugacy classes are the $\varphi$-conjugacy classes in the group $G$ and $R(\varphi,id) = R(\varphi)$.
The Reidemeister coincidence number $R(\varphi,\psi)$ has useful applications in Nielsen - Reidemeister  coincidence theory.
We call an  endomorphism $\varphi$  \emph{tame} if the
Reidemeister numbers $R(\varphi^n)$ are finite for all $n \in
\mathbb{N}$.
We call the pair $(\varphi,\psi)$ of endomorphisms \emph{tame} if the coincidence
Reidemeister numbers $R(\varphi^n,\psi^n)$ are finite for all $n \in
\mathbb{N}$.

Maps $f,g$ induce homomorphisms $f_\#, g_\#$ between fundamental groups of compact manifolds $X$ and $Y$.
We define the \emph{topological} Reidemeister coincidence number of maps $R(f,g) := R(f_\#,g_\#)$
to be the Reidemeister coincidence number of the~induced homomorphisms.
If $Y=X$ then we are going to say that the pair of maps $f,g$ is \emph{tame} if
 $R(f^n, g^n) < \infty$ for every $n \in \N$.

In the theory of dynamical systems, the Reidemeister coincidence
numbers count the synchronisation points of two
maps, i.e. the points whose orbits intersect under simultaneous
iterations of two maps; see \cite{Mi13}, for instance.

In the present paper, taking a dynamical point on view, we study the growth rate
and asymptotic behavior of the sequences of the Reidemeister, and the Nielsen numbers
$\{R(\varphi^n)\}$, $\{R(\varphi^n,\psi^n)\}$, $\{R(f^n)\}$, $\{N(f^n, g^n)\}$
for a tame endomorphism $\varphi$, and a tame map $f$, and
for a tame pair $(\varphi,\psi)$ of endomorphisms, and a tame pair  $(f,g)$ of maps.
We also  prove the Gauss congruences for  the sequence $\{R(\varphi^n,\psi^n)\}$ of the
Reidemeister coincidence numbers of the tame pair $(\varphi,\psi)$ of endomorphisms of a
torsion-free nilpotent group~$G$ of finite Pr\"ufer rank.
Furthermore, we prove the rationality of the Nielsen coincidence zeta function and
the Gauss congruences for the sequence $\{N(f^n, g^n)\}$ of the Nielsen coincidence numbers
and show that the growth rate exists for the sequence $\{N(f^n, g^n)\}$
for a tame pair of maps $(f,g)$ of a compact nilmanifold to itself.

This work is organized as follows.

In section \ref{sec:preliminary}, we remind definitions, facts and notation from other theories
 which are of  importance in course of the paper.
They regard topological entropy of a map, unitary dual of a group,
isolated lower central series of a~nilpotent group, and $p$-adic numbers.

In section \ref{sec:growth-reidemeister}, we show that for a tame endomorphism of a
torsion-free nilpotent group~$G$ of finite Pr\"ufer rank the growth rate
of the sequence $\{R(\varphi^n)\}$ of the Reidemeister numbers
exists and we provide a closed formula
for the growth rate via the eigenvalues of induced endomorphisms on abelian sections.

In section \ref{sec:growth-entropy}, we establish a connection between the growth rate
of the sequence $\{R(\varphi^n)\}$ of the Reidemeister numbers and topological entropy
of unitary duals maps induced on the unitary duals of abelian sections by a tame endomorphism
of a finitely generated torsion-free nilpotent group~$G$.

In section \ref{sec:coincidence-reidemeister},  we show that for a tame pair
$(\varphi,\psi)$  of endomorphisms of a
torsion-free nilpotent group~$G$ of finite Pr\"ufer rank the growth rate of the sequence
$\{R(\varphi^n,\psi^n)$\} of the Reidemeister coincidence numbers exists
and we provide a closed formula for the growth rate via the eigenvalues
of induced endomorphisms on abelian sections.

In section \ref{sec:congruences}, we prove the Gauss congruences for the sequence
$\{R(\varphi^n,\psi^n)\}$ of the
Reidemeister coincidence numbers of the tame pair $(\varphi,\psi)$ of endomorphisms of a
torsion-free nilpotent group~$G$ of finite Pr\"ufer rank.

In section \ref{sec:coincidence-nielsen}, we prove the rationality of the Nielsen coincidence zeta function
and
the Gauss congruences for the sequence $\{N(f^n, g^n)\}$ of the Nielsen coincidence numbers
and show that the growth rate of the sequence \{$N(f^n, g^n)\}$ of the
Nielsen coincidence numbers exists for a tame pair of maps  $(f,g)$
of a compact nilmanifold to itself.

In section \ref{sec:asymptotic}, we study asymptotic behavior of the sequences
$\{R(\varphi^n,\psi^n)\}$,  $\{N(f^n, g^n)\}$
of the Reidemeister coincidence numbers  of the tame pair $(\varphi,\psi)$ of endomorphisms of a
torsion-free nilpotent group~$G$ of finite Pr\"ufer rank and the Nielsen coincidence  numbers
 for a tame pair of maps  $(f,g)$
of a compact nilmanifold to itself.

\section{Preliminary considerations }\label{sec:preliminary}

\subsection{Topological entropy and  growth rate  of the Nielsen numbers of iterations}\label{EC}

The most widely used measure for the complexity of a dynamical system is the topological
entropy. We remind its definition.
Let $ f: X \rightarrow X $ be a self-map of a compact metric space. For a given $\epsilon > 0 $
and $ n \in \N$, a subset $E \subset X$ is said to be \emph{$(n,\epsilon)$-separated} under $f$ if for
each pair $x \not= y$ in $E$ there is $0 \leq i <n $ such that $ d(f^i(x), f^i(y)) \geq \epsilon$.
Let $s_n(\epsilon,f)$  denote the largest cardinality of any $(n,\epsilon)$-separated subset $E$
under $f$. Thus  $s_n(\epsilon,f)$ is the greatest number of orbit segments ${x,f(x),\cdots,f^{n-1}(x)}$
of length $n$ that can be distinguished one from another provided we can only distinguish
between points of $X$ that are  at least $\epsilon$ apart.
Now let
$$
  h(f,\epsilon):= \limsup_{n} \frac{1}{n}\log \,s_n(\epsilon,f)
$$
$$
  h(f):=\lim_{\epsilon \rightarrow 0} h(f,\epsilon).
$$
The number $0\leq h(f) \leq \infty $, which is independent of the metric $d$ used,
providing the induced topology is the same,
is called the topological entropy of $f$.
If $ h(f,\epsilon)> 0$ then, up to resolution $ \epsilon >0$, the number $s_n(\epsilon,f)$ of
distinguishable orbit segments of length $n$ grows exponentially with $n$. So $h(f)$
measures the growth rate in $n$ of the number of orbit segments of length $n$
with arbitrarily fine resolution.

The following example will be used to show a connection between the growth rate of Reidemeister numbers and topological entropy in Section \ref{sec:growth-entropy}.

\begin{example}[\protect{(see \cite[Example 3.2c]{KatHas})}]\label{ex:entropy-computation}
  We compute topological entropy for the expanding map $E_m: S^1 \to S^1$
  of the unit circle to itself which sends $z \mapsto z^m$ where $m \in \mathbb{Z} \setminus \{-1,0,1\}$.
  We will measure distance between two points $x,y \in S^1$ as a normalized difference of their arguments
  (as complex numbers, let $\iota$ denote the imaginary unit) modulo $1$.
  We are going to compute cardinality of the maximal $(n,\epsilon)$-separeted set.

  Pick $\epsilon = 1/m^{k}$ for some $k \in \mathbb{N}$.
  Then points
  \[
    x_j := \exp \left( \frac{2\pi\iota j}{m^{n+k-1}} \right), \quad j = 0, 1, \dots, m^{n+k-1}-1
  \]
  form an $(n,\epsilon)$-separated set.
  For example for two points next to each other we have
  \[
    d \left(E_m^{n-1} (x_j), E_m^{n-1} (x_{j+1}) \right)
    = \frac{(j+1)m^{n-1} - jm^{n-1}}{m^{n+k-1}}
    = \frac{m^{n-1}}{m^{n+k-1}} = \frac{1}{m^k} = \epsilon.
  \]
  It also means that we cannot add further points to the set so that it remains $(n,\epsilon)$-separated
  therefore $s_n(1/m^k, E_m) = m^{n+k-1}$.
  Now directly from the definition
  \begin{align*}
    h(E_m) = \lim_{k \to \infty} \limsup_{n \to \infty} \frac{1}{n} \log m^{n+k-1} = \log |m|.
  \end{align*}
\end{example}

For a ``hyperbolic" (Axiom A) diffeomorphism of a manifold topological entropy
$h(f)$ measures the growth rate of the number of periodic points, see \cite{KatHas}.

A basic relation between  topological entropy $h(f)$ and Nielsen numbers
was found by N. Ivanov.
\begin{theorem}\cite{I} \label{Iv}
  Let $f$ be a continuous map on a compact connected polyhedron $X$. Then
  $$
    h(f) \geq \limsup_{n} \frac{1}{n}\cdot\log N(f^n) =: {\log N^\infty(f)}
  $$
\end{theorem}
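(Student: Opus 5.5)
The plan is to follow Ivanov's original strategy. I would show directly that essential fixed point classes of $f^n$ yield $(n,\epsilon)$-separated sets of size $N(f^n)$ for one fixed $\epsilon>0$ independent of $n$. Then $s_n(\epsilon,f)\ge N(f^n)$, and taking $\limsup_n \frac1n\log$ gives $h(f)\ge h(f,\epsilon)\ge \log N^\infty(f)$.

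First, fix a metric on $X$ and, using compactness of the polyhedron, choose $\epsilon>0$ small enough for the following: any two points at distance $<\epsilon$ are joined by a canonical short path, for instance a segment in a small star neighbourhood of a fine triangulation. Any loop built from such short paths together with images under $f$ of short paths should then be controllable, meaning that a concatenation of short paths of length $<\epsilon$ at each step is homotopically trivial in the appropriate sense. The cleanest choice is $\epsilon$ less than a Lebesgue number for a cover by contractible open stars, chosen so that $f$ maps $\epsilon$-balls into stars; uniform continuity of $f$ makes this possible.

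Second, since each essential class of $f^n$ is nonempty, pick one fixed point $x_i$ of $f^n$ in each of the $N(f^n)$ essential classes. Suppose $x,y$ are two of them with $d(f^i(x),f^i(y))<\epsilon$ for all $0\le i<n$. Let $c_i$ be the short path from $f^i(x)$ to $f^i(y)$, for $i=0,\dots,n$; here $f^n(x)=x$ and $f^n(y)=y$, so $c_n=c_0$. Since $d(f^i(x),f^i(y))<\epsilon$, the path $f\circ c_i$ and the path $c_{i+1}$ have the same endpoints and lie in a common contractible star, so $f\circ c_i\simeq c_{i+1}$ rel endpoints. Composing these homotopies gives $f^n\circ c_0\simeq c_n=c_0$ rel endpoints. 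By the geometric characterization of fixed point classes recalled in the introduction, via the Nielsen disk, $x$ and $y$ then lie in the same fixed point class of $f^n$, which is a contradiction. Hence $\{x_i\}$ is $(n,\epsilon)$-separated.

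The main obstacle is the first step: choosing $\epsilon$ uniformly so that $f\circ c_i\simeq c_{i+1}$ rel endpoints for every $i$. I would handle it with an open cover $\mathcal U$ of $X$ by sets whose pairwise unions with nonempty intersection are simply connected, for example open stars of vertices in a fine subdivision. I would then take $\delta$ a Lebesgue number of $\mathcal U$, and take $\epsilon<\delta$ such that $d(a,b)<\epsilon$ implies $d(f(a),f(b))<\delta/2$ and the short path $c$ between $a,b$ satisfies $f(c)\subset$ a ball of radius $\delta/2$. Then $f\circ c_i$ and $c_{i+1}$ both lie in one member of $\mathcal U$, which is simply connected, so they are homotopic rel endpoints. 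With this in place the separation argument and the limit computation are routine.
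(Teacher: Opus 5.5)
Your proposal is correct and is essentially Ivanov's original argument from \cite{I}; the paper only cites that result without reproducing a proof. The argument picks one fixed point per essential class of $f^n$ and shows the resulting set is $(n,\epsilon)$-separated for an $\epsilon$ independent of $n$, using the Nielsen-path characterization along the chain $f^n\circ c_0\simeq f^{n-1}\circ c_1\simeq\cdots\simeq c_n=c_0$. The one piece of bookkeeping to make explicit is that the short paths must come from a subdivision much finer than the one defining $\mathcal U$, so that both $c_{i+1}$ and $f\circ c_i$ lie in the $\delta/2$-ball about $f^{i+1}(x)$; uniform local path-connectedness of a compact polyhedron and uniform continuity of $f$ supply this.
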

The number $N^\infty(f)$ is called the \emph{growth rate} of the Nielsen numbers of iterations of $f$.
This inequality is remarkable in that it does not require smoothness of the map and provides a common lower bound for the topological entropy of all maps in a homotopy class.

\subsection{Unitary dual}
By $\wh G$ we denote the \emph{unitary dual} of $G$, i.e.
the space of equivalence classes of
unitary irreducible representations of $G$, equipped with the
\emph{compact-open} topology.
If $\varphi: G\to G$ is an automorphism,
it induces a dual map $\wh\varphi:\wh G\to\wh G$, $\wh\varphi (\r):=\r\circ\varphi$.
This dual map $\wh\varphi$ define a dynamical system  on the unitary dual space $\widehat{G}$.

The unitary dual of any finitely generated abelian group $G$ is an abelian group $\wh G$ which is,
by Pontryagin duality, the direct sum
of a~Torus whose dimension is the rank of $G$, and of a~finite abelian group.
In that case the dual map will be called the \emph{Pontryagin dual map}.

\subsection{Isolated lower central series of a torsion-free nilpotent group}

Let $G$ be a group. For a~subgroup $H \leqslant G$, we define the \emph{isolator} $\sqrt[G]{H}$ of $H$ in $G$ as
$$
  \sqrt[G]{H} := \{g \in G \mbox{ } | \mbox{ } g^n \in H \mbox{ for some } n \in \mathbb{N} \}.
$$
Note that the isolator of a subgroup $H \leqslant G$ doesn't have to be a subgroup in general.

Now let $G$ be a torsion-free nilpotent group with the lower central series
$$
  G = \gamma_1(G) \geq \gamma_2(G) \geq \dots \geq \gamma_c(G) \geq \gamma_{c+1}(G) = \{e\}
$$
where as usual $\gamma_{k+1}(G) := \llbracket G, \gamma_k(G) \rrbracket$ is the $k$-fold commutator group.
It turns out that, by results of \cite[Lem.~1.1.2 and Lem.~1.1.4]{Dek}, we benefit by considering another central series, namely
\begin{equation}\label{equ:isolated-lcs}
  G = \sqrt[G]{\gamma_1(G)} \geq \sqrt[G]{\gamma_2(G)} \geq \dots \geq \sqrt[G]{\gamma_c(G)} \geq \sqrt[G]{\gamma_{c+1}(G)} = \{e\}
\end{equation}
which we call \emph{isolated} (or \emph{adapted}) \emph{lower central series}.
The isolated lower central series terminates iff $G$ is a torsion-free nilpotent group.
The main advantage of using this series over the usual lower central series is that all the factors
$$
  \sqrt[G]{\gamma_k(G)} / \sqrt[G]{\gamma_{k+1}(G)} \cong \mathbb{Z}^{d_k}, \quad d_k \in \mathbb{N}
$$
are torsion-free.

\subsection{$p$-adics}

In this section we remind basic definitions and prove facts about $p$-adic numbers needed later.
Let $p$ be any prime number. For $a \in \Z \setminus \{0\}$ we define the $p$-adic ordinal $\ord_p a$
to be the highest power of $p$ which divides $a$.
For a rational $q = a/b$ we define $\ord_p q := \ord_p a - \ord_p b$.
The $p$-adic norm of $q$ is defined
\[
  \left| q \right|_p := \begin{cases} \frac{1}{p^{\ord_p q}}, \quad &q \neq 0,\\ 0, \quad &q=0 \end{cases}.
\]
The norm is \emph{non-Archimedean} which means that for all $x, y \in \Q$
it satisfies $\left| x+y \right|_p \leq \max\left( |x|_p, |y|_p \right)$.

We will make use of the following property:
\begin{equation}\label{eq:padic-triangle}
  \big( |x|_p < |y|_p \big) \quad \Longrightarrow \quad |x-y|_p = |y|_p.
\end{equation}
The field $\Q$ with the norm $|\cdot|_p$ is not complete but if we consider the set $\Q_p$
of equivalence classes of Cauchy sequences with regard to the relation
\[
  \{a_i\} \sim \{b_i\} \quad \Longleftrightarrow \quad |a_i - b_i|_p \to 0 \ (i \to \infty)
\]
(just as we actually do when we go from $\Q$ to $\R$ with the usual norm)
with the $p$-adic norm of a sequence defined as $\left|\{a_i\}\right|_p := \lim |a_i|_p$
then $\Q_p$ turns out to be a complete field.
Moreover, from (\ref{eq:padic-triangle}) and definition of Cauchy sequences we get
that in fact the sequence $\{|a_i|_p\}$ of $p$-adic norms of elements of a Cauchy sequence $\{a_i\}$
is constant for sufficiently large $i$.
The following remark will be important later.
\begin{remark}\label{thm:possible-values-of-padic-norm}
  Possible values for $|\cdot|_p$ on $\Q_p$ are in the set $\{p^n\}_{n \in \Z} \cup \{0\}$.
  In particular the greatest possible value $<1$ is $1/p$.
\end{remark}

We are going to work with numbers algebraic over $\Q_p$.
Let $\Q_p(\xi)$ be a finite extension of $\Q_p$ generated by $\xi$ which is a root of a monic polynomial
\[
  x^n + a_1 x^{n-1} + \dots + a_{n-1} x + a_n, \quad a_i \in \Q_p.
\]
We define the \emph{norm} of the extension as $\N_{\Q_p(\xi)/\Q_p}(\xi) := (-1)^n a_n$.
The extension of $p$-adic norm from $\Q_p$ to $\Q_p(\xi)$ is defined as
\[
  |\xi|_p := \left| \N_{\Q_p(\xi)/\Q_p}(\xi) \right|_p^{1/n}.
\]
There exist the algebraic closure $\overline{\Q_p}$ and its completion $\Omega$ (which is algebraically
closed as~well) with further extensions of the norm but we will not need them in this work.
The subset $\Z_p := \{ x \in \Q_p \mid |x|_p \leq 1 \}$ is a ring
and its elements are called \emph{$p$-adic integers}

There is the $p$-adic analog of logarithm defined for all $z \in \Omega$ satisfying $|z|_p < 1$:
\begin{equation*}
  \log_p(1+z) := \sum_{m=1}^{\infty} \frac{(-1)^{m+1}}{m} z^m.
\end{equation*}
We are going to make use of the $p$-adic logarithm to prove a very specific inequality.

\begin{lemma}\label{thm:padic-log-geq-1/n}
  Let $\xi$ be an algebraic number over $\Q_p$ such that $|\xi|_p = 1$ but $\xi$ is not a root of unity.
  Then there is a constant $C$ such that for every $n \in \N$ the inequality
  \begin{equation}\label{eq:1/n-leq-x-1}
    \frac{1}{n} \leq C \cdot \left| \xi^n - 1 \right|_p
  \end{equation}
  holds. The constant does not depend on $n$.
\end{lemma}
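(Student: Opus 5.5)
The idea is to reduce to a power of $\xi$ that lies so close to $1$ that the $p$-adic logarithm is an isometry there. The estimate then follows from $|n|_p \geq 1/n$.

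Work in the finite extension $K = \Q_p(\xi)$ with the extended norm $|\cdot|_p$. I will use the standard facts that this norm is multiplicative and non-Archimedean on $K$, that $\mathcal O_K = \{|x|_p \le 1\}$ is a ring, and that its residue field $\mathcal O_K/\mathfrak m_K$ is finite, say of order $q$.

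\emph{Step 1: some power of $\xi$ is close to $1$.} Since $|\xi|_p = 1$, the element $\xi$ is a unit of $\mathcal O_K$. Its image in the finite residue field then satisfies $\bar\xi^{\,q-1}=1$, so $|\xi^{q-1}-1|_p<1$. Raising to $p$-th powers improves this. If $\eta_0 = 1+z$ with $|z|_p<1$, then
\[
(1+z)^p - 1 = pz(\cdots) + z^p,
\]
so $|(1+z)^p-1|_p \le \max(|p|_p|z|_p,\, |z|_p^p)$, which is strictly smaller than $|z|_p$. The values $|z|_p$ lie in a discrete set, because the value group of $K$ is discrete. Hence after finitely many $p$-th powers we obtain an integer $N$ such that $\eta := \xi^N$ satisfies
\[
|\eta - 1|_p < p^{-1/(p-1)}.
\]
Moreover $\eta \ne 1$, because $\xi$ is not a root of unity.

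\emph{Step 2: reduction to $\eta$.} We have
\[
\xi^{nN}-1 = (\xi^n-1)\bigl(1+\xi^n+\dots+\xi^{n(N-1)}\bigr),
\]
and the second factor has norm $\le 1$. Therefore
\[
|\xi^n-1|_p \ \ge\ |\eta^n-1|_p,
\]
and it suffices to prove that $|\eta^n-1|_p \ge c/n$ for some constant $c>0$.

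\emph{Step 3: the logarithm.} On the disc $D=\{1+z : |z|_p<p^{-1/(p-1)}\}$ the series $\log_p$ converges and satisfies $|\log_p(1+z)|_p = |z|_p$. To see this, note that for $m\ge2$ the term $z^m/m$ has strictly smaller norm than $z$, since $|m|_p^{-1}|z|_p^{m-1} < 1$ whenever $|z|_p<p^{-1/(p-1)}$. The disc $D$ is a multiplicative group and $\log_p$ is a homomorphism on it, so $\log_p(\eta^n)=n\log_p\eta$. Also $\eta^n\in D$, because $|\eta^n-1|_p\le|\eta-1|_p$. Putting these together,
\[
|\eta^n-1|_p = |\log_p \eta^n|_p = |n|_p\,|\log_p\eta|_p = |n|_p\,|\eta-1|_p \ \ge\ \frac{|\eta-1|_p}{n},
\]
using $|n|_p = p^{-\ord_p n}\ge 1/n$. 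Since $\eta\neq1$, we may take $C = 1/|\eta-1|_p$, which does not depend on $n$. This gives \eqref{eq:1/n-leq-x-1}.

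The main obstacle is Step 3. One must justify the isometry property and the homomorphism property of $\log_p$ on $D$ inside the extension $K$; both are standard consequences of the term-by-term norm estimate. One must also justify that the norm on $K$ has a discrete value group, which Step 1 uses. If one prefers to avoid the homomorphism property, an alternative is to verify $|\eta^n-1|_p = |n|_p|\eta-1|_p$ directly. This can be done by writing $n = p^k u$ with $p\nmid u$, and checking separately that raising to a power coprime to $p$ preserves $|\cdot -1|_p$ on $D$ and that raising to the $p$-th power multiplies it by exactly $|p|_p$ there.
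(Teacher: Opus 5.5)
Your proof is correct. It shares the paper's core mechanism: the $p$-adic logarithm turns $\xi^n$ into $n\log_p\xi$, and then $|n|_p\ge 1/n$ finishes. The technical route is different, though.

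\textbf{The paper's route.} It works with $\xi$ itself and splits into cases. If $|\xi^n-1|_p=1$, it takes $C=1$. If $|\xi^n-1|_p<1$, it uses only a crude upper bound $|\log_p(1+z)|_p\le M^*|z|_p$ on the whole disc $|z|_p\le p^{-1/k}$, where $k=[\Q_p(\xi):\Q_p]$. This bound comes from maximizing $m\,p^{-(m-1)/k}$ over $m$. Combined with $|\log_p(\xi^n)|_p=|n|_p\,|\log_p\xi|_p$, it gives $C=\max\{1,M^*/|\log_p\xi|_p\}$.

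\textbf{Your route.} You first pass to $\eta=\xi^N$, which lies in the smaller disc $|z|_p<p^{-1/(p-1)}$ where $\log_p$ is an isometry. You then transfer the estimate back through the divisibility $\xi^n-1\mid\xi^{nN}-1$. This yields the exact identity $|\eta^n-1|_p=|n|_p\,|\eta-1|_p$. It needs no case split and no optimization in $m$, and it gives the explicit constant $C=1/|\xi^N-1|_p$.

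\textbf{What your reduction buys.} It handles two points the paper passes over.
\begin{itemize}
\item The series defining $\log_p\xi$ converges only when $|\xi-1|_p<1$, and the hypothesis $|\xi|_p=1$ does not guarantee this.
\item The paper divides by $|\log_p\xi|_p$ without ever checking that $\log_p\xi\neq 0$.
\end{itemize}
In your argument both issues disappear. $\log_p\eta$ is defined by construction. It is nonzero because $|\log_p\eta|_p=|\eta-1|_p$ and $\eta\neq 1$.

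To repair the paper's version, one would set $\log_p\xi:=N^{-1}\log_p(\xi^N)$, which is essentially your Step~1. Because the paper's bound holds on the whole open unit disc, it would only need the power $N=q-1$, without your $p$-th power iterations. But it would then still need a separate argument that $\log_p(\xi^{q-1})\neq0$, for instance that the kernel of $\log_p$ consists of roots of unity. That argument is usually proved by passing to the isometry disc anyway. So your organization is the cleaner and more self-contained of the two.
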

\begin{proof}
  For $|1|_p = 1 = |\xi|_p$, from (\ref{eq:padic-triangle}) we have $|\xi^n-1|_p \leq 1$ for all $n \in \N$.
  If for some $n$ there is equality $|\xi^n-1|_p = 1$ then (\ref{eq:1/n-leq-x-1}) holds for $C=1$

  Now consider $n$ for which $|\xi^n-1|_p < 1$.
  Denote the degree $\left[ \Q_p(\xi) : \Q_p \right] =: k$.
  From remark \ref{thm:possible-values-of-padic-norm} and
  the~definition of the $p$-adic norm for algebraic numbers we have
  \begin{equation}\label{eq:bound-for-padic-norm-of-x-1}
      |\xi^n-1|_p \leq \frac{1}{\sqrt[k]{p}}.
  \end{equation}
  Therefore we can compute the $p$-adic logarithm and
  \begin{align*}
    |n|_p \cdot |\log_p (\xi)|_p &= |\log_p (\xi^n)|_p \\
    &= \left|(\xi^n - 1) - \frac{(\xi^n - 1)^2}{2} + \frac{(\xi^n - 1)^3}{3} - \dots \right|_p \\
    &\leq \max_m \left( \left|\frac{(\xi^n - 1)^m}{m} \right|_p \right) \\
  \end{align*}
  Note that using (\ref{eq:bound-for-padic-norm-of-x-1}) we get
  \begin{equation}\label{eq:padic-maximum-term}
    0 \leq \left| \frac{(\xi^n - 1)^m}{m} \right|_p
    = p^{\ord_p m} \cdot \left| \xi^n - 1 \right|_p^m
    \leq m \cdot \left| \xi^n - 1 \right|_p^m
    \leq m \cdot \left( \frac{1}{\sqrt[k]{p}} \right)^m.
  \end{equation}
  For a constant $q \in (0, 1/\sqrt[k]{p}]$ the derivative
  $\frac{\partial}{\partial x} xq^x = q^x(1+x\ln q)$ vanishes only for
  $x = -1/\ln q$, is positive for smaller $x$ and negative for greater ones.
  Note that $-1/\ln q$ is an increasing function therefore in our situation $-1/\ln q \ \leq \ k/\ln p$.
  It is possible then that for a finite number of initial $m$s the terms over which we take the maximum
  actually grow, but eventually there is a maximum $M = M(n)$ and the sequence goes to $0$ as $m \to \infty$.

  In other words there exists $m_0 = m_0(n)$ satisfying $1 \leq m_0 \leq k/\ln p$, for which
  \begin{equation*} 
    \max_m \left( \left|\frac{(\xi^n - 1)^m}{m} \right|_p \right)
    = |\xi^n-1|_p \cdot \frac{\left| \xi^n-1 \right|_p^{m_0-1}}{|m_0|_p}
  \end{equation*}
  We can bound from above the right side similarly as above:
  \begin{align*}
    |\xi^n-1|_p \cdot \frac{\left| \xi^n-1 \right|_p^{m_0-1}}{|m_0|_p}
    &=  |\xi^n-1|_p \cdot \Big( p^{\ord_p m_0} \cdot \left| \xi^n-1 \right|_p^{m_0-1} \Big) \\
    &\leq |\xi^n-1|_p \cdot \Big( m_0 \cdot \left| \xi^n-1 \right|_p^{m_0-1} \Big)
  \end{align*}
  For $x \in [1, k/\ln p]$ the derivative $\frac{\partial}{\partial q} xq^{x-1} = x(x-1)q^{x-2} \geq 0$ so
  \begin{align*}
    |\xi^n-1|_p \cdot \Big( m_0 \cdot \left| \xi^n-1 \right|_p^{m_0-1} \Big)
    \ \leq \  |\xi^n-1|_p \cdot \left( m_0 \cdot \frac{1}{(\sqrt[k]{p})^{m_0-1}} \right)
  \end{align*}
  Note that
  \[
    M^* := \max_{1 \ \leq \ m \ \leq \ k/\ln p} \left\{ m \cdot p^{-(m-1)/k} \right\}
    \ \geq \ m_0 \cdot p^{-(m_0-1)/k}
  \]
  for all possible $m_0$, therefore $M^*$ does not depend on $n$. We got
  \[
    \frac{1}{n} \ \leq \ |n|_p \ \leq \ |\xi^n-1|_p \cdot \frac{M^*}{|\log_p(\xi)|_p}
  \]
  and (\ref{eq:1/n-leq-x-1}) holds for $C = M^* / |\log_p(\xi)|_p$.

  In general (\ref{eq:1/n-leq-x-1}) holds for all $n \in \N$ regardless of actual values of $|\xi^n-1|_p$
  if we put $C := \max \left\{1, M^* / |\log_p(\xi)|_p \right\}$.
\end{proof}

\bigskip
\section{Growth rate of the sequence $\{R(\varphi^n)\}$ of the Reidemeister numbers}\label{sec:growth-reidemeister}

Let $\varphi: G \to G$ be a tame endomorphism of a group $G$.
We introduce the growth rate  $R^{\infty}(\varphi)$ and  upper and lower growth rates
$R^{\infty}_{+}(\varphi)$  and  $R^{\infty}_{-}(\varphi)$ of the Reidemeister numbers
of iterations as follows
\begin{gather*}
  R^\infty_{+}(\varphi):=\limsup_{k\to\infty}R(\varphi^k)^{1/k}, \qquad
  R^\infty_{-}(\varphi):=\liminf_{k\to\infty}R(\varphi^k)^{1/k},\\
    R^\infty(\varphi):=\lim_{k\to\infty}R(\varphi^k)^{1/k}.
\end{gather*}
In general there is no reason to expect the upper (lower) limit above to be a limit,
therefore the growth rate $R^\infty(\varphi)$ may not exist.

\begin{example}\label{ex:compute-growth}
  Let $G = \Z$ and $\varphi(x) = dx, d \in \Z \setminus  \{-1, 0, 1\}$.
  Two elements $x, y \in G$ are in the same $\varphi^n$-conjugacy class if there exists an $a \in G$ such that
  \begin{equation*}
      y = a + x - d^na = x + a(1-d^n)
  \end{equation*}
  which is the same as to say that $x$ and $y$ are in the same coset of $(1-d^n)\Z$.
  It is clear that $R(\varphi^n) = |\Z / (1-d^n)\Z| = |d^n - 1|$.
  Now we can compute the growth rate
  \begin{equation*}
      R^{\infty}(\varphi) = \lim_{k\to\infty} \sqrt[k]{|d^k - 1|} = |d|.
  \end{equation*}
\end{example}

Let $\varphi \colon G \to G$ be an endomorphism of a group $G$.
Then $\varphi$  induces a continuous endomorphism
$\overline{\varphi} \colon \overline{G} \to \overline{G}$
of the profinite completion $\overline{G}$ of $G$, and a
natural map on the set of the Reidemeister classes
$$
\mathcal{R}(\varphi) \to
\mathcal{R}(\overline{\varphi}), \quad
[x]_{\varphi} \mapsto [ \iota x
]_{\overline{\varphi}},
$$
where $\iota: G \to  \overline{G}$ is the completion map.

\begin{lemma}[\protect{\cite[Sec.~5.3.2]{FelshB}, \cite{FelsKlop}}]
  \label{lem:prof-compl}
  In the situation described above, the following properties hold.
  \begin{enumerate}
  \item[(i)] If $R(\varphi) < \infty$, then the natural map
    $\mathcal{R}(\varphi) \to
    \mathcal{R}(\overline{\varphi})$ is surjective.
  \item[(ii)] If $G$ is abelian and $R(\varphi) < \infty$ then
    the natural map
    $\mathcal{R}(\varphi) \to
    \mathcal{R}(\overline{\varphi})$ is bijective.
  \end{enumerate}
\end{lemma}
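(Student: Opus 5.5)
The plan is to identify the Reidemeister classes with the orbits of a twisted action and pass to finite quotients. For a normal subgroup $N$ of finite index with $\varphi(N)\subseteq N$, write $\varphi_N$ for the induced endomorphism of $G/N$. The profinite completion is $\overline{G}=\varprojlim G/N$, and we may restrict to the cofinal family of \emph{characteristic} finite-index subgroups. In a finitely generated group these exist cofinally, since each finite-index normal subgroup contains the intersection of all subgroups of the same index, which is characteristic and of finite index. In general one instead uses the $\varphi$-invariant ones obtained by intersecting preimages $\varphi^{-k}(N)$. We will assume what \cite{FelshB} needs, namely that the relevant finite quotients form a cofinal invariant system. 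Then $\overline{\varphi}$ is the inverse limit of the $\varphi_N$, and $\mathcal{R}(\overline{\varphi})=\varprojlim \mathcal{R}(\varphi_N)$, because twisted conjugacy classes in a profinite group are closed orbits of a compact group action.

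For (i), the first step is to note that each map $\mathcal{R}(\varphi)\to\mathcal{R}(\varphi_N)$ is surjective, since $G\to G/N$ is onto. Next, because $R(\varphi)<\infty$, the surjections $\mathcal{R}(\varphi)\to\mathcal{R}(\varphi_N)$ stabilize. The fibres partition the finite set $\mathcal{R}(\varphi)$, and refining $N$ only refines this partition, so beyond some $N_0$ the partition is constant. Then for $N\subseteq N_0$ every map $\mathcal{R}(\varphi_N)\to\mathcal{R}(\varphi_{N_0})$ is a bijection. Given an element $(c_N)\in\varprojlim\mathcal{R}(\varphi_N)$, choose $x\in G$ whose class maps to $c_{N_0}$. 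By compatibility of the $c_N$ and bijectivity below $N_0$, the class of $x$ maps to $c_N$ for all $N$, so $[\iota x]_{\overline{\varphi}}=(c_N)$. This proves surjectivity. The point needing care is the identification of $\mathcal{R}(\overline{\varphi})$ with the inverse limit. Surjectivity onto the limit uses compactness: a nonempty inverse limit of the nonempty finite sets of lifts.

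For (ii), with $G$ abelian, $\mathcal{R}(\varphi)=\Coker(1-\varphi)=G/(1-\varphi)G$, and this is finite. Then $H:=(1-\varphi)G$ is a finite-index subgroup. It is $\varphi$-invariant, because $\varphi$ commutes with $1-\varphi$. Taking $N=H$ gives $\mathcal{R}(\varphi_H)=G/H=\mathcal{R}(\varphi)$, so the composite $\mathcal{R}(\varphi)\to\mathcal{R}(\overline{\varphi})\to\mathcal{R}(\varphi_H)$ is a bijection. Hence the natural map is injective, and combined with (i) it is bijective. The main obstacle is the general setup in (i), namely the inverse-limit description of $\mathcal{R}(\overline{\varphi})$ and the existence of enough invariant finite-index normal subgroups. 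Part (ii) is then immediate.
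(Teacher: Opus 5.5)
\textbf{Gap in the generality of (i).} The lemma is stated for an arbitrary group $G$. Your proof of (i) assumes, without proof, that the $\varphi$-invariant finite-index normal subgroups are cofinal, so that $\mathcal{R}(\overline{\varphi})=\varprojlim\mathcal{R}(\varphi_N)$. This fails in general. Your remedy $\bigcap_{k\ge 0}\varphi^{-k}(N)$ has finite index only when it is a finite intersection. That is guaranteed if $G$ has only finitely many subgroups of each index, which holds for finitely generated groups and for groups of finite Pr\"ufer rank, but not for all groups.

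For a counterexample, take $G=\bigoplus_{i\ge 0}\mathbb{Z}/2$, let $\varphi$ be the left shift $(x_0,x_1,\dots)\mapsto(x_1,x_2,\dots)$, and let $N=\{x : x_0=0\}$. If $M\subseteq N$ and $\varphi(M)\subseteq M$, then every $x\in M$ satisfies $x_k=(\varphi^k x)_0=0$ for all $k$, so $M=0$. Thus no invariant finite-index subgroup lies below $N$ at all. Yet $1-\varphi$ is surjective: given $y$, put $x_i=\sum_{j\ge i}y_j$. So $G$ is abelian with $R(\varphi)=1$, both parts of the lemma apply, and your inverse-limit description is unavailable. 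A smaller slip: for an endomorphism, ``characteristic'' is not enough. In the finitely generated case you want the fully invariant subgroup $\bigcap_{[G:K]\le n}K$.

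As written, you have proved the lemma only under an extra hypothesis. That hypothesis does hold where the paper uses the lemma. Only (ii) is invoked, for the torsion-free abelian factors $A_k$ of finite rank, and there the fully invariant subgroups $nA_k$ have finite index and are cofinal. It is nevertheless not part of the statement.

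\textbf{The fix.} The detour through inverse limits is unnecessary. You already note that twisted classes in $\overline{G}$ are orbits of the continuous action $g\cdot x=g\,x\,\overline{\varphi}(g)^{-1}$ of a compact group, hence closed. Use this directly. If $x_1,\dots,x_r$ represent $\mathcal{R}(\varphi)$, then $\iota(G)\subseteq\bigcup_{i}[\iota x_i]_{\overline{\varphi}}$. The right-hand side is a finite union of closed sets containing the dense subset $\iota(G)$, so it equals $\overline{G}$. This proves (i) for every $G$, with no inverse system and no stabilization argument.

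Your injectivity argument in (ii) needs no cofinality and is correct as it stands: you map further to $\mathcal{R}(\varphi_H)=G/H$, where $H=(1-\varphi)G$ is $\varphi$-invariant of finite index. Combined with the density argument for surjectivity, it gives (ii) in full generality.
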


A group $G$ is of \emph{finite (Pr\"ufer) rank} $\rk(G)$ if each its finitely generated subgroup has a generating set of cardinality $\le \rk(G)$ and
$\rk(G)$ is a minimal such number.

Denote by $\mathbb{P}$
the set of all rational primes; for $p \in \mathbb{P}$, the field of
$p$-adic numbers is denoted by $\mathbb{Q}_p$, the ring of $p$-adic
integers by $\mathbb{Z}_p$, and the $p$-adic absolute value (as well
as its unique extension to  a fixed  algebraic closure
$\overline{\mathbb{Q}}_p$) by $\lvert \cdot \rvert_p$.  The absolute
value on $\mathbb{C}$ is denoted by $\lvert \cdot \rvert_\infty$.

\begin{theorem} \label{thm:main-result-nilpotent}
  Let $\varphi \colon G \to G$ be a tame endomorphism of a
  torsion-free nilpotent group~$G$ of finite Pr\"ufer rank.  Let $c$
  denote the nilpotency class of~$G$ and, for $1 \le k \le c$, let
  $\a_k: A_k \to A_k$ denote the induced
  endomorphisms of the torsion-free abelian factor groups
  $A_k=\sqrt[G]{\gamma_k(G)} / \sqrt[G]{\gamma_{k+1}(G)}$ of finite
  rank, $d_k\geq 1$ say, that arise from the isolated lower central
  series  of~$G$.  Then the following hold.

  \smallskip
  \rm{(a)} For $1 \le k \le c$, let
  $$
    \a_{k,\mathbb{Q}} \colon
    A_{k,\mathbb{Q}} \to
    A_{k,\mathbb{Q}}
  $$
  denote the extensions of $\a_k $ to the divisible hull
  $A_{k,\mathbb{Q}} = \mathbb{Q} \otimes_\mathbb{Z} A_k \cong
  \mathbb{Q}^{d_k}$ of~$A_k$.  Let $\xi_{k,1},\ldots, \xi_{k,d_k}$  be the eigenvalues of
  $\a_{k,\mathbb{Q}}$  in a fixed
  algebraic closure of the field~$\mathbb{Q}$, including
  multiplicities.  Set
  $L_k = \mathbb{Q}(\xi_{k,1}, \ldots, \xi_{k,d_k})$; for each $p \in \mathbb{P}$, fix
  some embeddings $L_k \hookrightarrow \overline{\mathbb{Q}}_p$ and  $L_k \hookrightarrow \mathbb{C}$.

  Then there exist subsets $I_k(p) \subseteq \{1,\ldots,d_k\}$, for
  $p \in \mathbb{P}$, such that the following hold.
  \begin{enumerate}
  \item[(i)] For each $p \in \mathbb{P}$, the polynomial
    $\prod_{i \in I_k(p)} (X - \xi_{k,i})$ has coefficients in
    $\mathbb{Z}_p$; in particular,
    $\lvert \xi_{k,i} \rvert_p  \le 1$ for
    $i \in I_k(p)$.
  \item[(ii)] For each $n \in \mathbb{N}$,
  \begin{align} \label{equ:key-formula}
  \begin{split}
    R(\varphi^n) &= \prod_{k=1}^c\prod_{p \in \mathbb{P}}
    \prod_{i \in I_k(p)} \lvert \xi_{k,i}^{\, n} - 1
    \rvert_p^{\, -1} \\
    &= \prod_{k=1}^c\prod_{i=1}^{d_k } \left(\lvert \xi_{k,i}^{\, n} - 1 \rvert_\infty \cdot \prod_{p \in \mathbb{P}}
    \prod_{i \not \in I_k(p)} \lvert \xi_{k,i}^{\, n} - 1 \rvert_p\right);
  \end{split}
  \end{align}
    as this number is a positive integer,
    $\lvert \xi_{k,i}^{\, n} - 1 \rvert_p = 1$ for
    $1 \le i \le d_k$ for almost all~$p \in \mathbb{P}$.
  \end{enumerate}

  \smallskip

 \rm{(b)} Suppose that, for each $k \in \{1, \ldots, c\}$, the cardinality of the set
$$
\mathbf{P}_k:=\left\{p\in \mathbb{P}\colon \prod_{i \not\in I_k(p)} \lvert \xi_{k,i}^{\, n} - 1 \rvert_p \neq 1\right\}
$$
is finite  and that
  $| \xi_{k,i} |_\infty \neq 1$ for $1 \le i \le d_k$.

  Then the growth rate of the  Reidemeister numbers of iterations exists and is given by
  \begin{align}\label{eq:key-formula-growth}
  \begin{split}
    R^{\infty}(\varphi) &=R^{\infty}_{+}(\varphi)=R^{\infty}_{-}(\varphi)= \\
    &= \prod_{k=1}^{c}
    \prod_{i=1}^{d_k} \max\{|\xi_{k,i}|_\infty,1\}\cdot\prod_{k=1}^{c}\prod_{p \in \mathbf{P}_k}
    \prod_{i \in S^*_k(p)} \max \{ \lvert \xi_{k,i} \rvert_p, 1\},
  \end{split}
  \end{align}
  where  $S^*_k(p) = \{i \in \{ \{1, \ldots, d_k\} \setminus I_k(p)  \}\mid \lvert \xi_{k,i} \rvert_p \neq 1\}$
\end{theorem}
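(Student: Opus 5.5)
Part (a) is essentially the known product formula for Reidemeister numbers of tame endomorphisms of torsion-free nilpotent groups of finite Pr\"ufer rank, and I would deduce it in two stages. First reduce to the abelian factors. The isolated lower central series \eqref{equ:isolated-lcs} is $\varphi$-invariant and central, so each factor gets an induced map $\a_k$. For a central extension, tameness of $\varphi^n$ forces $R(\a_k^n)<\infty$. An induction on $c$, using a fibration/addition argument for Reidemeister classes over a central subgroup, then gives the multiplicativity $R(\varphi^n)=\prod_{k=1}^c R(\a_k^n)$. Here finiteness gives $\Coker(\a_k^n-\Id)$ finite, and the centralizer contributions cancel; for the torsion-free nilpotent case this is the standard Dekimpe-type product formula.

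Second, for an endomorphism $\a$ of a torsion-free abelian group $A$ of finite rank $d$, we have $R(\a^n)=|\Coker(\a^n-\Id)|$. By Lemma \ref{lem:prof-compl}(ii) this equals the number of Reidemeister classes of $\overline{\a^n}$ on the profinite completion $\overline A=\prod_p A_p$, so it equals $\prod_p|\Coker(\overline{\a}_p^n-\Id)|$. Each $A_p$ is a $\Z_p$-module of the form $\Z_p^{r_p}\oplus(\text{divisible part})$. Let $I(p)$ index the eigenvalues of $\a_{\Q}$ on the $\Z_p$-lattice part, so that the characteristic polynomial there is $\prod_{i\in I(p)}(X-\xi_i)\in\Z_p[X]$; this gives (i). The cokernel on that part has order $|\det(\a_p^n-\Id)|_p^{-1}=\prod_{i\in I(p)}|\xi_i^n-1|_p^{-1}$. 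On the divisible part the cokernel is trivial, since finiteness forces it to be. Multiplying over $p$ gives the first line of \eqref{equ:key-formula}. The second line follows from the product formula $\prod_{v}|\prod_i(\xi_i^n-1)|_v=1$ applied to the rational number $\det(\a_{\Q}^n-\Id)$. Integrality then forces almost all local factors to equal $1$.

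For (b), I would take $n$-th roots in the second line of \eqref{equ:key-formula}. Since the number of eigenvalues is finite, it suffices to compute $\lim_n|\xi^n-1|_v^{1/n}$ for each archimedean or $p$-adic factor that occurs, with $p\in\mathbf P_k$ for the finitely many relevant primes. Two cases are routine. If $|\xi|_\infty\ne1$, then $|\xi^n-1|_\infty^{1/n}\to\max\{|\xi|_\infty,1\}$. If $|\xi|_p\neq1$, the ultrametric property \eqref{eq:padic-triangle} gives $|\xi^n-1|_p=\max\{|\xi|_p^n,1\}$ exactly. The delicate case is $p\in\mathbf P_k$, $i\notin I_k(p)$ and $|\xi_{k,i}|_p=1$. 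Here $|\xi^n-1|_p\le1$ but could be small. Lemma \ref{thm:padic-log-geq-1/n} gives $1/(Cn)\le|\xi^n-1|_p\le1$ when $\xi$ is not a root of unity, so the $n$-th root tends to $1$. If $\xi$ were a root of unity, then $\xi^n=1$ for some $n$, and the archimedean factor would give $R(\varphi^n)=0$, contradicting tameness; in any case $|\xi|_\infty=1$ is excluded by hypothesis.

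Combining the cases, every factor has an honest limit. Finiteness of $\mathbf P_k$ lets us interchange the limit with the product over primes. This yields the product $\prod\max\{|\xi_{k,i}|_\infty,1\}\cdot\prod_{p\in\mathbf P_k}\prod_{i\in S_k^*(p)}\max\{|\xi_{k,i}|_p,1\}$, and hence $R^\infty=R^\infty_+=R^\infty_-$. I expect the main obstacle to be the reduction in (a) to the abelian sections, in particular checking that the tame hypothesis transfers to every $\a_k^n$ and that the count is exactly multiplicative in the finite-rank, non-finitely-generated setting. In (b) the hardest step is controlling the unit $p$-adic factors, which Lemma \ref{thm:padic-log-geq-1/n} is designed to handle.
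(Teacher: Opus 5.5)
Your proposal is correct and follows the paper's proof. In (a) you use the product formula $R(\varphi^n)=\prod_k R(\a_k^n)$, pass to pro-$p$ completions, and apply the adelic product formula. The step you leave implicit is the paper's identification of the pro-$p$ completion with $(\Z_p\otimes A_k)/D$: it is this that makes the eigenvalues there a sub-multiset $\{\xi_{k,i}\}_{i\in I_k(p)}$. (Note that the pro-$p$ completion itself has no divisible part.) In (b) you handle the $p$-adic unit factors with Lemma~\ref{thm:padic-log-geq-1/n}, exactly as the paper does. The only real difference is in the archimedean part: the paper expands $\prod_i|\xi_{k,i}^n-1|_\infty$ as $\sum_j c_jw_j^n$ and extracts a dominant term, whereas you compute $\lim_n|\xi^n-1|_\infty^{1/n}=\max\{|\xi|_\infty,1\}$ factor by factor, which is slightly more direct.
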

\begin{proof}
 For a tame endomorphism $\varphi$, Theorem 6 in \cite{polyc} implies that
 $$
   R(\varphi^n)= \prod_{k=1}^c R(\a_k^{\, n}), \qquad \text{for $n \in \mathbb{N}$.}
 $$
 To prove the assertion (a) and the formula  (\ref{equ:key-formula})
 we follow closely the proofs of Proposition 3.4 and Theorem 1.4 in \cite{FelsKlop}.
 Using Lemma \ref{lem:prof-compl}(2), we
 may pass to the profinite completion of $\alpha_k$.
 For $1 \le k \le c$, the profinite completion $\overline{A_k}$ of the abelian group $A_k$ and its
 endomorphism $\overline{\a_k}$ decompose as direct products
 \[
 \iota \colon A_k \to \overline{A_k} = \prod_{p \in \mathbb{P}}
 (\overline{A_k})_p \qquad \text{and} \qquad \overline{\a_k} = \prod_{p
   \in \mathbb{P}} (\overline{\a_k})_p,
 \]
 where, for each prime $p$, the Sylow pro-$p$ subgroup of
 $\overline{A_k}$ is the pro-$p$ completion $(\overline{A_k})_p$
 of $A_k$, equipped with the endomorphism
 $(\overline{\a_k})_p \colon (\overline{A_k})_p \to
 (\overline{A_k})_p$. Lemma~\ref{lem:prof-compl}(2) shows that
 $$
 R(\a_k^n) = \prod_{p \in \mathbb{P}}
 R((\overline{\a_k})_p^{\, n}), \qquad
 \text{for $n \in \mathbb{N}$;}
 $$
 in particular, $R(\a_k^n) < \infty$ implies that
 $R((\overline{\a_k})_p^{\, n}) = 1$, for
 almost all $p \in \mathbb{P}$.  Hence the product is only formally
 infinite.

 Fix a prime $p \in \mathbb{P}$.
 On the one hand the pro-$p$ group $(\overline{A_k})_p$ is
 torsion-free, abelian and of rank at most~$d_k$, hence
 $(\overline{A_k})_p \cong \mathbb{Z}_p^{\, d_k(p)}$, where
 $d_k(p) = \mathrm{rk}((\overline{A_k})_p) \le d_k$.
 On the other hand we can arrive at $(\overline{A_k})_p$ in a way which says us more
 about eigenvalues of $(\overline{\a_k})_p$ and their relation to eigenvalues of $\alpha_{k, \Q}$.
 We observe that the group $A_k$ has the same pro-$p$ completion as $A_{k,\Z_p} := \Z_p \otimes_{\Z} A_k$
 (see \cite{FelsKlop} and \cite{RibZal}).
 The $\Z_p$-module $A_{k,\Z_p}$ decomposes as a direct sum of its maximal divisible submodule
 $D \cong \Q_p^{d-d_k(p)}$ and a free $Z_p$-module $A \cong \Z_p^{d_k(p)}$ for $d_k(p) \leq d_k$
 (see \cite[Theorem 20]{Kap}).

 The endomorphism $\alpha_{k,\Q}$ induces an endomorphism $\alpha_{k,\Z_p}: A_{k,\Z_p} \to A_{k,\Z_p}$
 The divisible submodule $D$ is invariant under $\alpha_{k,\Z_p}$ (see \cite[Chapter 5]{Kap})
 thus we obtain an endomorphism $\overline{\alpha}_{k,\Z_p}: A_{k,\Z_p}/D \to A_{k,\Z_p}/D$
 where the quotient group $A_{k,\Z_p}/D \cong A \cong \Z_p^{d_k(p)}$.
 Consequently, there is an isomorphism $\sigma: (\overline{A_k})_p \to A_{k,\Z_p}/D$ such that
 $\overline{\alpha}_{k,\Z_p} \circ \sigma = \sigma \circ (\overline{\a_k})_p$.

 We conclude that
 there exists a subset $I_k(p) \subseteq \{1,\ldots,d_k\}$
 such that the endomorphism $(\overline{\a_k})_p$ has eigenvalues $\xi_{k,i}$, $i \in I_k(p)$
 (i.e. a part of eigenvalues of $\a_{k,\mathbb{Q}}$,
 depending on which eigenvalues are acting on the divisible submodule $D$ and which are on the free module $A$).
 In particular, the coefficients of the characteristic polynomial
 $\prod_{i \in I_k(p)} (X - \xi_{k,i})$ of  $(\overline{\a_k})_p$
 belong to  $\mathbb{Z}_p$,  in particular, $\lvert \xi_{k,i} \rvert_p  \le 1$ for
 $i \in I_k(p)$  (see \cite[Prop. 3.4]{FelsKlop} for more details).

 Finally,
 \[
   R((\overline{\a_k})_p^{\, n})
   = \lvert \Coker((\overline{\a_k})_p^{\, n} - 1)  \rvert
   = \lvert \det( (\overline{\a_k})_p^{\, n} - 1) \rvert_p^{-1}
   = \prod_{i \in I_k(p)} \lvert \xi_{k,i}^{\, n}-1 \rvert_p^{\,-1}.
 \]
 Taking the product over all primes $p$ and then over $1 \le k \le c$, we arrive to the first equality
 in~(\ref{equ:key-formula}).  Using the adelic formula
 $\lvert a \rvert_\infty \cdot \prod_{p \in \mathbb{P}} \lvert a \rvert_p =
 1$,
 for $a \in \mathbb{Q} \setminus \{0\}$, we obtain the second
 equality in~(\ref{equ:key-formula}).
 An assumption that endomorphism $\varphi$ is   tame  implies that eigenvalues $\xi_{k,i}$ are not roots of unity.

 Thus it remains to prove the assertion (b).

 For $1 \le k \le c$ and $p \in \mathbf{P}_k$ we write
 $S_k(p) = \{1, \ldots, d_k\} \setminus I_k(p)$ and
 $S^*_k(p) = \{i \in S_k(p) \mid \lvert \xi_{k,i} \rvert_p \neq 1\}$.  We set
 $$
   b = \prod_{k=1}^c \prod_{p \in \mathbf{P}_k} \prod_{i \in
     S^*_k(p)} \max \{ \lvert \xi_{k,i} \rvert_p, 1\}.
 $$
 Then for $i \in S^*_k(p)$, $|\xi_{k,i}^{\, n}|_p=|\xi_{k,i}|_p^{\, n} \ne 1$ and
 $|\xi_{k,i}^{\, n}-1|_p=\max\{|\xi_{k,i}^{\, n}|_p,1\}
  =\max\{|\xi_{k,i}|_p^{n},1\}
  =\max\{\xi_{k,i},1\}^{\, n}$ .
 From this  and from (\ref{equ:key-formula})  we deduce  that, for $n \in \mathbb{N}$,
  $$
    R(\varphi^n) = g(n) \cdot f(n),
  $$
 where
 \begin{align}\label{eq:gif}
 \begin{split}
    g(n) &= \prod_{k=1}^c \prod_{i=1}^{d_k}
    \lvert \xi_{k,i}^{\, n} - 1 \rvert_\infty \cdot  b^n,\\
     f(n) &= \prod_{k=1}^c \prod_{p \in \mathbf{P}_k}
    \prod_{i \in S_k(p) \setminus S^*_k(p)} \big\vert \xi_{k,i}^n - 1 \big\vert_p.
 \end{split}
 \end{align}

 Now we will rearrange the product
  $\prod_{i=1}^{d_k} \lvert \xi_{k,i}^{\, n} - 1
  \rvert_\infty$.  Complex eigenvalues $\xi_{k,i}$ in the spectrum of
  $\a_{k,\mathbb{Q}}$  appear in pairs with their complex conjugate
  $\ov{\xi_{k,i}}$.  For a complex number $\lambda$ one has
$$
|\lambda^n-1| \cdot |\overline\lambda^n-1|=|\lambda^n-1| ^2=(\lambda^n-1)\cdot (\overline\lambda^n-1).
$$
If $\xi_{k,i}$  are real
  eigenvalues of $\a_{k,\mathbb{Q}}$   then
  we have
  $| \xi_{k,i}^{\, n} - 1 |_\infty= \delta_{1,k,i}^{\,
    n} - \delta_{2,k,i}^{\, n}$, where
  $\delta_{1,k,i} = \max\{\lvert \xi_{k,i} \rvert_\infty,1\} $ and
  $\delta_{2,k,i}=\frac{\xi_{k,i}}{\delta_{1,k,i}}$.
Suppose that $\lambda_1, \overline{\lambda_1},\dots,\lambda_s, \overline{\lambda_s}$ are all complex eigenvalues
and $\xi_{k,i(t)}$, $t=1,\dots,T$, are the real ones.
Then the above two observations show that
\begin{equation}\label{eq:form_of_w}
\prod_{i=1}^{d_k} \lvert \xi_{k,i}^{\, n} - 1
  \rvert_\infty=\sum \pm (\mu_1 \nu_1 \cdots \mu_s \nu_s  \delta_{\epsilon(1),k,i(1)} \cdots  \delta_{\epsilon(T),k,i(T)})^n,
\end{equation}
where $\mu_i$ is $\lambda_i$ or $1$, $\nu_i$ is $\overline{\lambda_i}$ or $1$, $\epsilon(i)$ is $1$ or $2$.
Hence, taking the product over $k$ and incorporating $b$ we obtain
  \begin{equation}\label{eq:gn_w}
    g(n) = \sum_{j \in J} c_jw_j^{\, n},
  \end{equation}
  where $J$ is a finite index set, $c_j =\pm 1$ and
  $ w_j \in \mathbb{C} \setminus \{0\}$, $j\in J$.

  If $S_k(p)\setminus S_k^*(p)=\varnothing$ for all $p\in \mathbf{P}_k $ and all $k=1, . . . , c$,
then~$f(n)\equiv 1$.

 Now suppose that~$S_k(p)\setminus S_k^*(p)\neq\varnothing$ for
some $k\in \{1, \dots, c\}$, $p\in \mathbf{P}_k $.
Then \linebreak $\lim_{n\rightarrow\infty}f(n)^{1/n}=1$.
Indeed for the eigenvalues involved in $f(n)$ we can derive the bounds
\begin{equation}\label{eq:fn_bounds}
  \frac{1}{n} \leq C \cdot | \xi_{k,i}^n - 1 |_p \quad \mbox{ and } \quad | \xi_{k,i}^n - 1 |_p \leq 1
\end{equation}
where $C$ is a constant independent of $n$.
Note that $|\xi_{k,i}|_p = 1$ by construction of $S_k(p)\setminus S_k^*(p)$.
The right inequality follows from (\ref{eq:padic-triangle}) whereas
the left one is proved in lemma \ref{thm:padic-log-geq-1/n}.
The squeeze theorem gives us $\lim_{n\rightarrow\infty}f(n)^{1/n}=1$.

Since~$|\xi_{k,i}|_{\infty}\neq 1 $ for  $i=1,\dots,d_k$,  $k=1, \dots, c $,
 the equality (\ref{eq:form_of_w}) and the explicit form of $\delta_{1,k,i}$ and
  $\delta_{2,k,i}$ imply that
there is a dominant term $w_m$ in the expansion (\ref{eq:gn_w}) for which
$|w_m|_{\infty}=\prod_{k=1}^{c} \prod_{i=1}^{d_k} \max\{|\xi_{k,i}|_\infty,1\}\cdot b $, such that
$|w_m|_{\infty}>|w_j|_{\infty}$ for all $j\neq m$.

Then the growth rate of the  Reidemeister numbers of iterations exists and is given by
  $$R^{\infty}(\varphi)=|w_m|_{\infty}=\prod_{k=1}^{c} \prod_{i=1}^{d_k} \max\{|\xi_{k,i}|_\infty,1\}\cdot\prod_{k=1}^{c}\prod_{p \in \mathbf{P}_k} \prod_{i \in
      S^*_k(p)} \max \{ \lvert \xi_{k,i} \rvert_p, 1\}.$$
\end{proof}

\begin{remark}\label{thm:growth-reidemeister-finitely-generated}
  In the proof of the first part of the theorem \ref{thm:main-result-nilpotent} we passed
  to the profinite completions $(\overline{A_k})_p$ of the factor groups
  and stated that because they are torsion-free, abelian and of rank at most $d_k$
  they are isomorphic to $\mathbb{Z}_p^{d_k(p)}$ where $d_k(p) \leq d_k$.

  In case the group $G$ is not only of finite Pr\"ufer rank but is actually finitely-generated,
  we have equality $d_k(p) = d_k$ and the set $I_k(p) = \{1, \dots, d_k\}$.
  It is because if torsion-free abelian factor groups are finitely generated $A_k \cong \Z^{d_k}$
  then the maximal divisible submodule $D$ of $A_{k,\Z_p}$ is trivial.
  It results in disappearing the $p$-adic part of the formula (\ref{equ:key-formula})
  and consequently (\ref{eq:key-formula-growth}) becomes
  \begin{equation}\label{eq:growth-reideiester-finitely-generated}
    R^{\infty}(\varphi) = \prod_{k=1}^{c} \prod_{i=1}^{d_k} \max\{|\xi_{k,i}|_\infty,1\}.
  \end{equation}
  For more details consult \cite[Theorem 1.4]{FelsKlop}.
\end{remark}

\bigskip
\section{Growth rate of the sequence $\{R(\varphi^n)\}$ of the Reidemeister numbers and topological entropy}\label{sec:growth-entropy}

The following example leads us to results on a connection between the growth rate
of the Reidemeister numbers and the topological entropy.
\begin{example}\label{ex:connection-growth-entropy}
  Consider the group $G = \mathbb{Z}$ and the endomorphism $\varphi: x \mapsto dx$
  for $d \in \mathbb{Z} \setminus \{-1, 0, 1\}$ as in Example \ref{ex:compute-growth}.
  We computed the growth rate $R^\infty(\varphi) = |d|$.

  On the Pontryagin dual $\hat{G} \cong U(1)$ our endomorphism $\varphi$ induces the dual endomorphism $\hat{\varphi}(\chi) = \chi^d$.
  In Example \ref{ex:entropy-computation} we computed topological entropy
  $h_{top}(\hat{\varphi}) = \log |d|$.
  We observe that
  \begin{equation*}
    R^{\infty}(\varphi) = \exp(h_{top}(\hat{\varphi})).
  \end{equation*}
\end{example}

A homeomorphism $f$ of a compact metric space $(X,d)$ is \emph{expansive} if there exists a constant $\epsilon > 0$
such that given any two distinct points $x, y \in X$, there exists $ n \in \Z$ such that $d(f^n(x), f^n(y)) >\epsilon$.

We will need also a notion of the Bowen's \emph{specification} \cite{Bow}.
We cite the definition from \cite{Buzzi}.
The formal definition can seem vague at first so it is good to have the following intuition in mind.
We say that a self-map $f: X \to X$ of a compact metric space has the \emph{specification property}
if, given a precision $\epsilon>0$ and a~number of segments of orbits, one is able to find a~periodic orbit
which, with the precision $\epsilon$, follows each one of them and is moving from one segment to another
in a fixed amount of time which depends only on $\epsilon$.

That being said, let $(x_1, l_1),\dots,(x_r, l_r) \in X \times \mathbb{N}$ where $0 \leq r < \infty$ be given.
For a constant $D \in \mathbb{N}$ we will denote the partial length
$$
  L(s) := l_1 + \dots + l_{s-1} + (s-1)D, \quad 1 \leq s \leq r+1.
$$
The map $f$ has the \emph{specification property} if for every $\epsilon>0$ there exist a constant $D<\infty$
and $z \in X$ such that $f^{L(r+1)}(z) = z$ and
$$
  d(f^{L(s)+k}(z), f^k(x_s)) < \epsilon, \quad 0 \leq k < l_s
$$
for all $s = 1,\dots,r$.

\begin{lemma}[\protect{\cite{Bow}}]\label{B}
 For an expansive and having the specification property homeomorphism $f$ of a compact metric space $X$ the growth rate $F^\infty(f)$ of the number of periodic points exists and is given by
$$ F^\infty(f):=\lim_{k\to\infty} \#(\Fix(f^k))^{1/k} = \exp(h(f)), $$
where $h(f)$ denotes the topological entropy.
\end{lemma}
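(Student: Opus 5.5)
The plan is to prove the two inequalities
\[
\limsup_{k\to\infty}\tfrac1k\log\#\Fix(f^k)\le h(f)\le\liminf_{k\to\infty}\tfrac1k\log\#\Fix(f^k)
\]
separately: expansiveness gives the upper bound and specification gives the lower bound. Together they show that the limit exists and equals $h(f)$, hence $F^\infty(f)=\exp(h(f))$.

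\emph{Upper bound.} Let $\delta>0$ be an expansive constant of $f$. Take distinct $x,y\in\Fix(f^n)$. By expansiveness there is $m\in\Z$ with $d(f^m(x),f^m(y))>\delta$. Since both points are $f^n$-periodic, we may replace $m$ by its residue modulo $n$ and assume $0\le m<n$. So $\Fix(f^n)$ is $(n,\delta)$-separated, and $\#\Fix(f^n)\le s_n(\delta,f)$. This gives $\limsup_n\frac1n\log\#\Fix(f^n)\le h(f,\delta)$. For an expansive homeomorphism, $h(f,\epsilon)$ is constant for $\epsilon$ below the expansive constant, since any two points which are $\delta$-close along a long two-sided orbit segment are $\epsilon$-close in the middle of it. Hence $h(f,\delta)=h(f)$. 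Alternatively, it suffices to note $h(f,\delta)\le h(f)$ because $h(f,\cdot)$ is monotone in $\epsilon$; this is all the upper bound needs.

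\emph{Lower bound.} Fix $\epsilon>0$ and let $D=D(\epsilon)$ be the constant from the specification property. Let $E$ be an $(n,3\epsilon)$-separated set with $\#E=s_n(3\epsilon,f)$. For each $x\in E$ apply specification with $r=1$ and $(x_1,l_1)=(x,n)$. This gives $z_x$ with $f^{n+D}(z_x)=z_x$ and $d(f^k(z_x),f^k(x))<\epsilon$ for $0\le k<n$. If $x\ne x'$, the triangle inequality at a time where $x,x'$ are $3\epsilon$ apart shows $z_x\ne z_{x'}$. Hence
\[
\#\Fix(f^{n+D})\ge s_n(3\epsilon,f).
\]
The main obstacle is that $h(f,3\epsilon)$ is defined via a $\limsup$ in $n$, while we need a $\liminf$ of the periodic-point counts. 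I would handle this by gluing several segments with specification. For $x_1,\dots,x_r\in E$, specification gives a point of period $r(n+D)$ that shadows $x_s$ on the $s$-th block. The same separation argument shows that different tuples give different periodic points, so
\[
\#\Fix(f^{r(n+D)})\ge s_n(3\epsilon,f)^r .
\]
Now choose $n$ along a subsequence where $\frac1n\log s_n(3\epsilon,f)$ is close to $h(f,3\epsilon)$. For an arbitrary large $m$, write $m=r(n+D)+q$ with $0\le q<n+D$. The remaining $q$ steps can be absorbed by adding one more short segment, of length $q$ minus the gap time, to the specification. This still yields at least $s_n(3\epsilon,f)^r$ distinct points of period exactly $m$, and for small remainders I would simply allow one more gap of length $D$. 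Letting $m\to\infty$ with $n$ fixed, and then $n\to\infty$ along the subsequence, gives
\[
\liminf_m\tfrac1m\log\#\Fix(f^m)\ge \frac{\log s_n(3\epsilon,f)}{n+D}\longrightarrow h(f,3\epsilon).
\]
Finally let $\epsilon\to0$. This bookkeeping with the remainder $q$ is the step I expect to need the most care.
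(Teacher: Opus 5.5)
The paper gives no proof of this lemma and simply cites Bowen, so there is nothing internal to compare against. Your argument is correct and is essentially Bowen's. Expansiveness makes $\Fix(f^n)$ an $(n,\delta)$-separated set, which also shows $\#\Fix(f^n)<\infty$, a fact the statement tacitly needs. Specification turns $(n,3\epsilon)$-separated sets into distinct periodic points.

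Where you differ from the usual presentation is the passage from $\limsup$ to $\liminf$. The standard route uses only the single-segment bound $\#\Fix(f^{n+D})\ge s_n(3\epsilon,f)$, valid for every $n$. This gives $\liminf_m\frac1m\log\#\Fix(f^m)\ge\liminf_n\frac1n\log s_n(3\epsilon,f)$. One then quotes the general fact $\lim_{\epsilon\to0}\liminf_n\frac1n\log s_n(\epsilon,f)=h(f)$, proved by sandwiching $s_n(\epsilon,f)$ between cover numbers of $\bigvee_{i=0}^{n-1}f^{-i}\mathcal{U}$, which are submultiplicative in $n$.

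Your concatenation estimate $\#\Fix(f^{r(n+D)})\ge s_n(3\epsilon,f)^r$ replaces that fact with a direct supermultiplicativity argument. It therefore works verbatim with the paper's $\limsup$ definition of $h(f,\epsilon)$ and its rigid-gap version of specification, at the cost of the remainder bookkeeping.

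That bookkeeping is easier than you expect. Rather than appending an extra segment, lengthen the last one to $(x_r,n+q)$. The period becomes $rn+q+rD=m$ for every $0\le q<n+D$. The first $n$ times of the last block still separate distinct tuples, and no case split or zero-length segment is needed.

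Finally, both routes need $D$ to depend on $\epsilon$ only, not on the chosen orbit segments. You use it that way, and that is the intended meaning of specification, even though the paper's wording places the quantifier on $D$ after the data.
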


\begin{theorem}\label{thm:reidem-entropy}
  Let $\varphi: G\to G$ be  a tame  automorphism of a finitely generated discrete Abelian group $G$.
  Suppose that the Pontryagin dual map  $\wh\varphi$ is expansive and having the specification property
  homeomorphism of the Pontryagin dual group $\wh G$.
  Then the growth rate of the  Reidemeister numbers of iterations of $\varphi$  exists and is given by
  \[
    R^{\infty}(\varphi)= \exp(h(\wh\varphi)).
  \]
\end{theorem}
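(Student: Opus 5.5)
The plan is to identify, for each $n$, the Reidemeister number $R(\varphi^n)$ with the number of fixed points of $\wh\varphi^{\,n}$ on $\wh G$. Then Lemma \ref{B} finishes the argument, since it gives
\[
  \lim_{n\to\infty} \#\Fix(\wh\varphi^{\,n})^{1/n} = \exp(h(\wh\varphi)).
\]

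\textbf{Step 1: reduce to a cokernel.} Since $G$ is abelian, two elements $x,y$ are $\varphi^n$-conjugate iff $y-x \in \operatorname{Im}(\varphi^n-\Id)$; this is the additive computation of Example \ref{ex:compute-growth}. Hence
\[
  R(\varphi^n) = |\Coker(\varphi^n-\Id)| = |G/(\varphi^n-\Id)G|,
\]
and this is finite by tameness.

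\textbf{Step 2: Pontryagin duality.} A character $\chi\in\wh G$ satisfies $\wh\varphi^{\,n}(\chi)=\chi$ iff $\chi\circ\varphi^n=\chi$. This holds iff $\chi$ is trivial on $(\varphi^n-\Id)G$, i.e. iff $\chi$ factors through the finite group $G/(\varphi^n-\Id)G$. So
\[
  \Fix(\wh\varphi^{\,n}) \cong \wh{\Coker(\varphi^n-\Id)}.
\]
A finite abelian group is (non-canonically) isomorphic to its dual, so
\[
  \#\Fix(\wh\varphi^{\,n}) = |\Coker(\varphi^n-\Id)| = R(\varphi^n).
\]
This equality of cardinalities is the fact already implicit in Example \ref{ex:connection-growth-entropy}. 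The only point needing care is that the annihilator of a subgroup $H$ of a discrete abelian group is exactly $\wh{G/H}$. This is standard duality, and finiteness of the cokernel rules out any topological subtlety.

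\textbf{Step 3: conclude.} By hypothesis $\wh\varphi$ is an expansive homeomorphism of the compact metric group $\wh G$ with the specification property. Since $G$ is finitely generated, $\wh G$ is a torus times a finite group, hence compact metrizable. It is a homeomorphism because $\varphi$ is an automorphism. Lemma \ref{B} therefore gives
\[
  \lim_{n\to\infty} \#\Fix(\wh\varphi^{\,n})^{1/n} = \exp(h(\wh\varphi)).
\]
Substituting Step 2 shows that $\lim_n R(\varphi^n)^{1/n}$ exists and equals $\exp(h(\wh\varphi))$, so $R^\infty(\varphi)=R^\infty_+(\varphi)=R^\infty_-(\varphi)=\exp(h(\wh\varphi))$.

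The main obstacle is essentially bookkeeping in Step 2: the duality between fixed characters and the cokernel. Everything else is a direct invocation of Lemma \ref{B}. As a consistency check, Remark \ref{thm:growth-reidemeister-finitely-generated} expresses $R^\infty(\varphi)$ as $\prod\max\{|\xi_i|,1\}$, which agrees with the known entropy formula for toral endomorphisms.
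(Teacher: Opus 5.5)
Your proposal is correct and follows essentially the same route as the paper. The paper also writes $R(\varphi^n)=\#\Coker(1-\varphi^n)$, uses Pontryagin duality to identify the dual of this finite cokernel with $\Ker\widehat{(1-\varphi^n)}=\Fix(\wh\varphi^{\,n})$, and then applies Lemma~\ref{B}; your annihilator formulation in Step 2 is just a more explicit version of the paper's remark that the dual of the cokernel is canonically the kernel of the dual map.
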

\begin{proof}
  The Pontryagin dual of  the cokernel of $(1-\varphi):G\rightarrow G$ is canonically isomorphic to the
  kernel of the dual map $\widehat{(1-\varphi)}:\hat{G}\rightarrow \hat{G}$.
  Since $R(\varphi^n)=\#\Coker (1-\varphi^n)$ is finite, we have
  \[
    \#\Coker (1-\varphi^n) =\#\widehat{\Coker(1-\varphi^n)}= \#\Ker \widehat{(1-\varphi^n)} .
  \]
  The map $\widehat{1-\varphi^n}$ is equal to $\hat{1}-\hat{\varphi^n}$.
  Its kernel is thus the set of fixed points of the
  map $\hat{\varphi^n}:\hat{G}\rightarrow \hat{G}$.
  We therefore have:
  \begin{equation}
    R(\varphi^n) = \#\fix\left(\hat{\varphi^n}:\hat{G}\rightarrow \hat{G}\right)
  \end{equation}
  Then, Lemma \ref{B} implies the result.
\end{proof}

\begin{theorem}
  Let $G$ be a finitely generated torsion-free nilpotent group with the isolated lower central series
  \rm{\eqref{equ:isolated-lcs}}.
  Let $\varphi: G \to G$ be its automorphism such that all induced automorphisms
  \[
    \varphi_k: \sqrt[G]{\gamma_k(G)} / \sqrt[G]{\gamma_{k+1}(G)}
    \to \sqrt[G]{\gamma_k(G)} / \sqrt[G]{\gamma_{k+1}(G)}
  \]
  fulfill assumptions of \rm{Theorem \ref{thm:reidem-entropy}}. Then
  \[
    R^{\infty}(\varphi) = \exp \left( \sum_{k=1}^c h(\wh{\varphi_k}) \right).
  \]
\end{theorem}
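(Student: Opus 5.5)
The plan is to reduce to the abelian factors via the product formula and then apply Theorem~\ref{thm:reidem-entropy} factor by factor. First, since each $\varphi_k$ satisfies the assumptions of Theorem~\ref{thm:reidem-entropy}, each $\varphi_k$ is tame. Then $\varphi$ itself is tame, because Theorem~6 of \cite{polyc}, already used in the proof of Theorem~\ref{thm:main-result-nilpotent}, gives
\[
  R(\varphi^n)=\prod_{k=1}^c R(\varphi_k^{\,n}), \qquad n\in\N,
\]
and every factor on the right is finite. If that theorem is only stated under the hypothesis that $\varphi$ is tame, I would instead argue directly. The isolated lower central series is $\varphi$-invariant and the factors are finitely generated free abelian, so by induction on $c$ the exact sequences of Reidemeister sets give $R(\varphi^n)\le \prod_k R(\varphi_k^n)<\infty$. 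Once tameness is established, the displayed product formula holds for all $n$.

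Next, each factor $A_k=\sqrt[G]{\gamma_k(G)}/\sqrt[G]{\gamma_{k+1}(G)}\cong\Z^{d_k}$ is a finitely generated abelian group, and $\varphi_k$ is a tame automorphism whose Pontryagin dual $\wh{\varphi_k}$ is expansive and has the specification property. Theorem~\ref{thm:reidem-entropy} then says that the limit $\lim_n R(\varphi_k^n)^{1/n}$ exists and equals $\exp(h(\wh{\varphi_k}))$. This limit is finite, since $h$ of a toral automorphism is finite, and it is at least $1$, since $R\ge 1$.

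Finally, taking $n$-th roots in the product formula, the limit of a finite product of convergent sequences of positive reals is the product of the limits. Hence
\[
  R^\infty(\varphi)=\lim_{n\to\infty}\prod_{k=1}^c R(\varphi_k^{\,n})^{1/n}=\prod_{k=1}^c \exp\bigl(h(\wh{\varphi_k})\bigr)=\exp\Bigl(\sum_{k=1}^c h(\wh{\varphi_k})\Bigr).
\]
As a consistency check, this agrees with Remark~\ref{thm:growth-reidemeister-finitely-generated}. There, $R^\infty(\varphi)=\prod_{k,i}\max\{|\xi_{k,i}|_\infty,1\}$, and by the classical entropy formula for toral automorphisms this equals $\exp(\sum_k h(\wh{\varphi_k}))$.

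The main obstacle is the first step: making sure the product formula for $R(\varphi^n)$ is available. In particular, we must check that tameness of $\varphi$ follows from the hypotheses on the $\varphi_k$, and that the formula holds as an equality rather than only as an inequality. The rest is a direct application of Theorem~\ref{thm:reidem-entropy}.
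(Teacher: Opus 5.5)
Your proposal is correct and follows the paper's own argument. Both use the Roman'kov product formula $R(\varphi^n)=\prod_{k=1}^c R(\varphi_k^{\,n})$, take $n$-th roots and pass to the limit, and then apply Theorem~\ref{thm:reidem-entropy} to each abelian factor. You also check explicitly that $\varphi$ itself is tame, via the central-extension bound $R(\varphi^n)\le\prod_k R(\varphi_k^{\,n})$; the paper passes over this silently, so it is a worthwhile addition.
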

\begin{proof}
  For endomorphisms of finitely generated nilpotent group we have the Roman'kov formula \cite{Rom}
  (for generalisations see also \cite[Theorem 6]{polyc} and \cite[Proposition 2.1]{FelsKlop})
  \[
    R(\varphi) = \prod_{k=1}^{c} R(\varphi_k).
  \]
  For each iteration $n$ of $\varphi_k^n$ the equality (\ref{equ:key-formula}),
  and therefore the whole Theorem \ref{thm:main-result-nilpotent}, holds.
  Hence, we can take the n-th root on both sides of the Roman'kov formula
  and go to the infinity with $n$.
  We get
  \[
    R^\infty(\varphi) = \prod_{k=1}^{c} R^\infty(\varphi_k)
  \]
  and the thesis follows from theorem \ref{thm:reidem-entropy}.
\end{proof}

\bigskip
\section{ Growth rate of the sequence $\{R(\varphi^n,\psi^n)\}$ of the  Reidemeister coincidence numbers}\label{sec:coincidence-reidemeister}

By analogy to the growth rates of the Reidemeister numbers of iterations of one endomorphism
we define the growth rate, upper growth rate and lower growth rate of the Reidemeister coincidence numbers of iterations
\begin{align*}
  R^\infty(\varphi, \psi) &:= \lim_{k\to\infty} R(\varphi^k, \psi^k)^{1/k}\\
  R_+^\infty(\varphi, \psi) &:= \limsup_{k\to\infty} R(\varphi^k, \psi^k)^{1/k}\\
  R_-^\infty(\varphi, \psi) &:= \liminf_{k\to\infty} R(\varphi^k, \psi^k)^{1/k}
\end{align*}
respectively.

The following Theorem is essentially based on \cite[Thm.~1.4]{FelsKlop}

\begin{theorem}\label{thm:growth-coinc}
  Let $\varphi, \psi \colon G \to G$ be a tame pair of endomorphisms of a
  torsion-free nilpotent group~$G$ of finite Pr\"ufer rank.  Let $c$
  denote the nilpotency class of~$G$ and, for $1 \le k \le c$, let
  $\varphi_k ,\psi_k \colon G_k \to G_k$ denote the induced
  endomorphisms of the torsion-free abelian factor groups
  $G_k=\overline{\gamma}_k(G) / \overline{\gamma}_{k+1}(G)$ of finite
  rank, $d_k\geq 1$ say, that arise from the isolated lower central
  series~\eqref{equ:isolated-lcs} of~$G$.  Then the following hold.

  \smallskip

  \rm{(a)} For each $n \in \mathbb{N}$, there is a bijection
  between the set $\mathcal{R}(\varphi^n, \psi^n)$ of
  $(\varphi,\psi)$-Reidemeister coincidence classes and the cartesian
  product $\prod_{k=1}^c \mathcal{R}(\varphi_k^{\, n}, \psi_k^{\, n})$;
  consequently,
  \begin{equation}\label{eq:reidemeister-coincidence-romankov}
    R(\varphi^n, \psi^n) = \prod_{k=1}^c R(\varphi_k^{\, n},
    \psi_k^{\, n}) \qquad \text{for $n \in \mathbb{N}$.}
  \end{equation}

  \rm{(b)} For $1 \le k \le c$, let
  \[
    \varphi_{k,\mathbb{Q}}, \psi_{k,\mathbb{Q}} \colon
    G_{k,\mathbb{Q}} \to
    G_{k,\mathbb{Q}}
  \]
  denote the extensions of $\varphi_k, \psi_k$ to the divisible hull
  $G_{k,\mathbb{Q}} = \mathbb{Q} \otimes_\mathbb{Z} G_k \cong
  \mathbb{Q}^{d_k}$ of~$G_k$.  Suppose that each pair of endomorphisms
  $\varphi_{k,\mathbb{Q}}, \psi_{k,\mathbb{Q}}$ is simultaneously
  triangularisable.  Let $\xi_{k,1}, \ldots, \xi_{k,d_k}$ and
  $\eta_{k,1}, \ldots, \eta_{k,d_k}$ be the eigenvalues of
  $\varphi_{k,\mathbb{Q}}$ and $\psi_{k,\mathbb{Q}}$ in a fixed
  algebraic closure of the field~$\mathbb{Q}$, including
  multiplicities, ordered so that, for $n \in \mathbb{N}$, the
  eigenvalues of
  $\varphi_{k,\mathbb{Q}}^{\, n} - \psi_{k,\mathbb{Q}}^{\, n}$ are
  $\xi_{k,1}^{\, n} - \eta_{k,1}^{\, n}, \ldots, \xi_{k,d_k}^{\, n} -
  \eta_{k,d_k}^{\,n}$.  Set
  $L_k = \mathbb{Q}(\xi_{k,1}, \ldots, \xi_{k, d_k}, \eta_{k,1},
  \ldots, \eta_{k, d_k})$; for each $p \in \mathbb{P}$ fix an
  embedding $L_k \hookrightarrow \overline{\mathbb{Q}}_p$ and choose
  an embedding $L_k \hookrightarrow \mathbb{C}$.

  Then there exist subsets $I_k(p) \subseteq \{1,\ldots,d_k\}$, for
  $p \in \mathbb{P}$, such that the following hold.
  \begin{enumerate}
  \item[(i)] For each $p \in \mathbb{P}$, the polynomials
    $\prod_{i \in I_k(p)} (X - \xi_{k,i})$ and
    $\prod_{i \in I_k(p)} (X - \eta_{k,i})$ have coefficients in
    $\mathbb{Z}_p$; in particular,
    $\lvert \xi_{k,i} \rvert_p , \lvert \eta_{k,i} \rvert_p \le 1$ for
    $i \in I_k(p)$.
  \item[(ii)] For each $n \in \mathbb{N}$,
  \begin{align} \label{equ:key-formula-2}
  \begin{split}
    R(\varphi_k^{\, n},\psi_k^{\, n}) &= \prod_{p \in \mathbb{P}}
    \prod_{i \in I_k(p)} \lvert \xi_{k,i}^{\, n} - \eta_{k,i}^{\, n}
    \rvert_p^{\, -1} =\\
    &= \prod_{i=1}^{d_k }\lvert \xi_{k,i}^{\, n} -
    \eta_{k,i}^{\, n} \rvert_\infty \cdot \prod_{p \in \mathbb{P}}
    \prod_{i \not \in I_k(p)} \lvert \xi_{k,i}^{\, n} - \eta_{k,i}^{\,
      n} \rvert_p;
    \end{split}
  \end{align}
    as this number is a positive integer,
    $\lvert \xi_{k,i}^{\, n} - \eta_{k,i}^{\, n} \rvert_p = 1$ for
    $1 \le i \le d_k$ for almost all~$p \in \mathbb{P}$.
  \end{enumerate}

  \smallskip
  \rm{(c)} Suppose that, for each $k \in \{1, \ldots, c\}$, the cardinality of the set
  $$
    \mathbf{P}_k:=\left\{p\in \mathbb{P}\colon \prod_{i \not\in I_k(p)} \lvert \xi_{k,i}^{\, n}
      - \eta_{k,i}^{\, n} \rvert_p \neq 1\right\}
  $$
  is finite  and that $| \xi_{k,i} |_\infty \neq |\eta_{k,i}|_\infty$ for $1 \le i \le d_k$.

  Then the growth rate of the  Reidemeister coincidence numbers of iterations exists and is given by
  \begin{align}\begin{split}\label{eq:growth-coinc}
    R^{\infty}(\varphi, \psi) &= R^{\infty}_{+}(\varphi, \psi) = R^{\infty}_{-}(\varphi, \psi) = \\
    &= \prod_{k=1}^c \prod_{p \in \mathbf{P}_k} \prod_{i \in S^*_k(p)} \max \{ \lvert \xi_{k,i} \rvert_p,  |\eta_{k,i}|_p\} \cdot \\
    & \hspace{3em}
     \cdot \prod_{k=1}^c \prod_{p \in \mathbf{P}_k} \, \prod_{i \in S_k(p)\setminus S^*_k(p)}\lvert \eta_{k,i} \rvert_p  \cdot
     \prod_{k=1}^c \prod_{i=1}^{d} \max\{|\xi_{k,i}|_\infty, |\eta_{k,i}|_\infty\},
  \end{split}\end{align}
  where for $p \in \mathbf{P}_k$ we write $S_k(p) = \{1,\dots,d_k\}\setminus I_k(p)$
  and $S_k^*(p) = \big\{i \in S_k(p) \mid \lvert \xi_{k,i} \rvert_p \neq |\eta_{k,i}|_p \big\}$
\end{theorem}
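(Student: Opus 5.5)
The plan follows the proof of Theorem \ref{thm:main-result-nilpotent}, with $1$ replaced by $\psi$ throughout. For (a) I would use the coincidence version of the Roman'kov-type product formula for tame pairs on torsion-free nilpotent groups of finite Pr\"ufer rank, as in \cite[Thm.~1.4]{FelsKlop} and \cite[Theorem 6]{polyc}. The isolated lower central series is fully invariant, so $\varphi^n,\psi^n$ induce $\varphi_k^n,\psi_k^n$ on each $G_k$. I would then argue by induction on $c$ using the central extension $1\to\overline{\gamma}_c(G)\to G\to G/\overline{\gamma}_c(G)\to 1$. Finiteness of $R(\varphi^n,\psi^n)$ makes the map on the central factor, $\varphi_c^n-\psi_c^n$, have finite cokernel; with centrality, this gives that each fibre of the projection $\mathcal R(\varphi^n,\psi^n)\to\mathcal R(\bar\varphi^n,\bar\psi^n)$ is in bijection with $\mathcal R(\varphi_c^n,\psi_c^n)=\Coker(\varphi_c^n-\psi_c^n)$. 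This yields \eqref{eq:reidemeister-coincidence-romankov}.

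For (b), fix $k$ and argue as before. By Lemma \ref{lem:prof-compl}(ii), extended to pairs on abelian groups (where $R(\varphi_k^n,\psi_k^n)=\#\Coker(\psi_k^n-\varphi_k^n)$), we get $R(\varphi_k^n,\psi_k^n)=\prod_p \#\Coker\bigl((\overline{\varphi_k})_p^n-(\overline{\psi_k})_p^n\bigr)$. Write $G_{k,\Z_p}=D\oplus A$ with $D$ divisible. Both endomorphisms preserve $D$, and $(\overline{G_k})_p\cong A\cong\Z_p^{d_k(p)}$. Simultaneous triangularisability over $\overline{\Q}$ lets one choose a common ordering, so the eigenvalues on the quotient form a common index set $I_k(p)$ for both maps. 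This gives (i), since characteristic polynomials of $\Z_p$-endomorphisms have $\Z_p$ coefficients. Then $\#\Coker = |\det(\cdot)|_p^{-1}=\prod_{i\in I_k(p)}|\xi_{k,i}^n-\eta_{k,i}^n|_p^{-1}$. The product formula applied to the rational number $\det(\varphi_{k,\Q}^n-\psi_{k,\Q}^n)\ne0$ gives the second equality in \eqref{equ:key-formula-2}.

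For (c), I would split $R(\varphi^n,\psi^n)=g(n)f(n)$ as in \eqref{eq:gif}.
\begin{itemize}
\item For $i\in S_k^*(p)$ the non-Archimedean property gives $|\xi^n-\eta^n|_p=\max\{|\xi|_p,|\eta|_p\}^n$; these factors go into $g(n)$ as a constant $b^n$.
\item For $i\in S_k(p)\setminus S_k^*(p)$, where $|\xi|_p=|\eta|_p\neq 0$, I would write $|\xi^n-\eta^n|_p=|\eta|_p^n\,|(\xi/\eta)^n-1|_p$. The factor $|\eta|_p^n$ also goes into $g(n)$, which explains the middle product in \eqref{eq:growth-coinc}.
\item The remaining factor $|(\xi/\eta)^n-1|_p$ has $|\xi/\eta|_p=1$. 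Tameness forces $\xi/\eta$ not to be a root of unity, since otherwise some $\xi^n-\eta^n=0$ and $R$ would be infinite. So Lemma \ref{thm:padic-log-geq-1/n} gives $1/(Cn)\le|(\xi/\eta)^n-1|_p\le1$, and $f(n)^{1/n}\to1$ because $\mathbf P_k$ is finite.
\end{itemize}

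The Archimedean part is $\prod|\xi^n-\eta^n|_\infty$. Expanding it as in \eqref{eq:form_of_w}, pairing complex conjugate factors and writing real factors as $\delta_1^n-\delta_2^n$ with $\delta_1=\max\{|\xi|,|\eta|\}$, gives $g(n)=\sum_j c_jw_j^n$. Since $|\xi_{k,i}|_\infty\ne|\eta_{k,i}|_\infty$, there is a unique dominant term of modulus $\prod\max\{|\xi|_\infty,|\eta|_\infty\}\cdot b\cdot\prod|\eta|_p$, and hence $g(n)^{1/n}$ converges to that modulus.

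I expect the main obstacle to be (b)(i): making the index set $I_k(p)$ common to $\xi$ and $\eta$. Simultaneous triangularisation over $\Q_p$-extensions has to be compatible with the divisible/free splitting. I would first try to show that a common flag can be refined through $D$, by triangularising on $D\otimes\overline{\Q}_p$ and on the quotient separately.
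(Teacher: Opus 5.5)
Your proposal is correct and follows essentially the same route as the paper: the Roman'kov-type product formula for (a), profinite completion with $p$-adic determinants and the product formula for (b), and the splitting $R(\varphi^n,\psi^n)=g(n)f(n)$ with Lemma~\ref{thm:padic-log-geq-1/n} and a dominant Archimedean term for (c). You are in fact more careful than the paper in checking that $\xi_{k,i}\eta_{k,i}^{-1}$ is not a root of unity (the paper only checks $\neq 1$), which is what that lemma actually requires.

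Two details to tighten:
\begin{enumerate}
\item[(1)] In (a), the fibre bijection needs more than centrality. The coincidence subgroup $\Coin(\bar\psi^n,\tau_{\bar x}\bar\varphi^n)$ of the quotient, with $\tau_{\bar x}$ conjugation by $\bar x$, must also be trivial. This follows inductively: each $\psi_k^n-\varphi_k^n$ with finite cokernel is injective on the torsion-free finite-rank factor $G_k$. The same fact is what gives finiteness of the cokernel on the central factor.
\item[(2)] For the common index set $I_k(p)$ you flag, concatenating simultaneous flags of $D\otimes\overline{\Q}_p$ and of the quotient works. Add Jordan--H\"older: the multiset of diagonal pairs $(\xi_{k,i},\eta_{k,i})$ of a simultaneous triangularisation does not depend on the chosen flag.
\end{enumerate}
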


\begin{proof}
  The equation in (a) is a generalization of the Roman'kov formula for ordinary twisted conjugacy classess
  in finitely generated nilpotent groups, see \cite[Prop.~2.1]{FelsKlop}.
  Therefore in what follows we consider the Reidemeister numbers on factor groups $G_k$.

  The profinite completions of the torsion-free abelian group $G_k$ and its
  endomorphisms decompose as direct products
  \[
    \iota \colon G_k \to \widehat{G}_k = \prod_{p \in \mathbb{P}} \widehat{G}_{k,p}
    \qquad \text{and} \qquad
    \widehat{\varphi}_k = \prod_{p \in \mathbb{P}} \widehat{\varphi}_{k,p},
    \quad \widehat{\psi}_k = \prod_{p \in \mathbb{P}} \widehat{\psi}_{k,p},
 \]
 where, for each prime $p$, the Sylow pro-$p$ subgroup of
 $\widehat{G}_k$ constitutes the pro-$p$ completion $\widehat{G}_{k,p}$
 of~$G_k$, equipped with endomorphisms
 $\widehat{\varphi}_{k,p}, \widehat{\psi}_{k,p} \colon \widehat{G}_{k,p} \to
 \widehat{G}_{k,p}$.
For the Reidemeister coincidence numbers we have a property analogous to Lemma~\ref{lem:prof-compl}
(see \cite{FelsKlop}) and as a result
 \[
 R(\varphi_k^n,\psi_k^n) = \prod_{p \in \mathbb{P}}
 R(\widehat{\varphi}_{k,p}^{\, n}, \widehat{\psi}_{k,p}^{\, n}), \qquad
 \text{for $n \in \mathbb{N}$;}
 \]
 in particular, $R(\varphi_k^n,\psi_k^n) < \infty$ implies that
 $R(\widehat{\varphi}_{k,p}^{\, n}, \widehat{\psi}_{k,p}^{\, n}) = 1$ for
 almost all $p \in \mathbb{P}$ so that the product is only formally
 infinite.

 We can basically repeat the reasoning from the Theorem \ref{thm:main-result-nilpotent} for both endomorphisms $\varphi_k, \psi_k$ simultaneously
 which leads us to the equality
 \[
  R(\widehat{\varphi}_{k,p}^{\, n},\widehat{\psi}_{k,p}^{\, n}) = \lvert
  \det(- \widehat{\varphi}_{k,p}^{\, n} + \widehat{\psi}_{k,p}^{\, n})
  \rvert_p^{\, -1} = \prod_{i \in I_k(p)} \lvert \xi_{k,i}^{\, n} -
  \eta_{k,i}^{\, n} \rvert_p^{\, -1},
\]
 which gives us (\ref{equ:key-formula-2}).

 To prove (\ref{eq:growth-coinc}) we follow the same logic as in the proof of the Theorem \ref{thm:main-result-nilpotent}
 being careful when substituting eigenvalues $\eta_{k,i}$ related to iterations of the second endomorphism $\psi$.
  For $p \in \mathbf{P}_k$ we write
  \[
    S_k(p) = \{1,\ldots,d_k\} \setminus I_k(p) \qquad \text{and} \qquad
    S^*_k(p) = \{i \in S_k(p) \mid \lvert \xi_{k,i} \rvert_p \neq \lvert
    \eta_{k,i} \rvert_p\};
  \]
  we remark right away that $\eta_{k,i} \neq 0$ for every
  $i \in S_k(p) \setminus S^*_k(p)$, because otherwise
  $\xi_{k,i} = \eta_{k,i} = 0$ and $\varphi_{k,\mathbb{Q}} - \psi_{k,\mathbb{Q}}$
  would have rank less than $d_k$.
  We set
  \[
    b := \prod_{k=1}^c \prod_{p \in \mathbf{P}_k} \, \prod_{i \in S^*_k(p)} \max \{ \lvert
    \xi_{k,i} \rvert_p, \lvert \eta_{k,i} \rvert_p \}, \qquad
    \eta := \prod_{k=1}^c \prod_{p \in \mathbf{P}_k} \, \prod_{i \in S_k(p)\setminus
      S^*_k(p)}\lvert \eta_{k,i} \rvert_p
  \]
  and, for $n \in \mathbb{N}$,
  \begin{gather*}
    g(n) := b^n\cdot\eta^n \cdot \prod_{k=1}^c \prod_{i=1}^{d_k} \lvert \xi_{k,i}^{\, n} - \eta_{k,i}^{\, n}
    \rvert_\infty \\
    f(n) := \prod_{k=1}^c \prod_{p \in \mathbf{P}_k} \, \prod_{i \in S_k(p)\setminus
      S_k^*(p)} \big\vert (\xi_{k,i} \eta_{k,i}^{\, -1})^n - 1 \big\vert_p.
  \end{gather*}
  We use (\ref{equ:key-formula-2}) along with basic properties of non-archimedean norm and we factor out $\eta$ to get
  \begin{equation}\label{eq:reidemeister-coincidence-g-f}
     R(\varphi^n, \psi^n) = g(n) \cdot f(n).
  \end{equation}

  Now we open up the product $\prod_{i=1}^{d_k} |\xi_{k,i}^{\, n} - \eta_{k,i}^{\, n}|_\infty$
  for every $k=1,\dots,c$ in $g(n)$.
  Complex eigenvalues $\xi_{k,i}, \eta_{k,i}$ in the spectra of $\varphi_{k,\Q}, \psi_{k,\Q}$ respectively,
  appear in pairs with their complex conjugates $\overline{\xi_{k,i}}, \overline{\eta_{k,i}}$ respectively.
  Such pairs can be lined up with one another in a simultaneous triangularisation as follows.
  Write $\varphi_{L_k}, \psi_{L_k}$ for the induced endomorphisms of the $L$-vector space
  $V := L \otimes_Q G \cong L^k \hookrightarrow \C^d$.
  If $v \in V$ is, at the same time, an eigenvector of $\varphi_{L_k}$ with complex eigenvalue $\xi_{k,d}$
  and an eigenvector ofr $\psi_{L_k}$ with eigenvalue $\eta_{k,d}$,
  then there is $w \in V$ such that $w$ is, at the same time, an eigenvector of $\varphi_{L_k}$
  with eigenvalue $\overline{\xi_{k,d}} \neq \xi_{k,d}$ and an eigenvector of $\psi_{L_k}$
  with eigenvalue $\overline{\eta_{k,d}}$, possibly equal to $\eta_{d,k}$.
  We can start the complete flag of $\{\varphi, \psi\}$-invariant subspaces of $V$ with
  $\{0\} \subset \langle v \rangle \subset \langle v, w \rangle$ and proceed with
  $V / \langle v, w \rangle$ by induction.
  We treat complex eigenvalues of $\psi_{L_k}$ in the same way.

  If at least one of $\xi_{k,i}, \eta_{k,i}$ is complex so that they are paired with
  $\xi_{k,j} = \overline{\xi_{k,i}}, \eta_{k,j} = \overline{\eta_{k,i}}$ for suitable $j \neq i$
  as discussed above, we see that
  \[
    |\xi_{k,i}^{\, n} - \eta_{k,i}^{\, n}|_\infty \cdot |\xi_{k,j}^{\, n} - \eta_{k,j}^{\, n}|_\infty
    = |\xi_{k,i}^{\, n} - \eta_{k,i}^{\, n}|_\infty^2
    = (\xi_{k,i}^{\, n} - \eta_{k,i}^{\, n}) \cdot (\overline{\xi_{k,j}}^{\, n} - \overline{\eta_{k,j}}^{\, n})
  \]
  If $\xi_{k,i}, \eta_{k,i}$ are both real eigenvalues, not paired up with another pair of eigenvalues,
  then we have
  \begin{gather*}
    |\xi_{k,i}^{\, n} - \eta_{k,i}^{\, n}|_\infty = \delta_{1,k,i}^n - \delta_{2,k,i}^n,\\
    \delta_{1,k,i} := \max \{|\xi_{k,i}|_\infty, |\eta_{k,i}|_\infty\}, \quad
    \delta_{2,k,i} := \frac{\xi_{k,i} \cdot \eta_{k,i}}{\delta_{1,k,i}}\\
  \end{gather*}
  We can now write the product $\prod_{i=1}^{d_k} |\xi_{k,i}^{\, n} - \eta_{k,i}^{\, n}|_\infty$
  in a manner analogous to \eqref{eq:form_of_w} which leads us to
  \[
    g(n) = \sum_{j \in J} c_j w_j^n,
  \]
  where $J$ is a finite index set, $c_j = \pm 1$ and $w_j \in \C \setminus \{0\}$.
  There is a dominant term $w_m$ for which
  \[
    |w_m|_\infty =  b \cdot \eta \cdot
    \prod_{k=1}^c \prod_{i=1}^{d_k} \max \big\{ |\xi_{k,i}|_\infty, |\eta_{k,i}|_\infty \big\}.
  \]

  We are left to show that $\lim_{n\to\infty} \sqrt[n]{f(n)} = 1$.
  First of all, note that $\xi_{k,i} \neq \eta_{k,i}$ because otherwise $\varphi_{k,\Q} - \psi_{k,\Q}$
  would have rank less than $d_k$, hence $\xi_{k,i}\cdot\eta_{k,i}^{-1} \neq 1$.
  By construction of $S_k(p) \setminus S_k^*(p)$ we have $|\xi_{k,i}|_p = |\eta_{k,i}|_p$,
  hence $|\xi_{k,i} \cdot \eta_{k,i}^{-1}|_p = 1$.
  Now we can proceed exactly as in the proof of Theorem \ref{thm:main-result-nilpotent}
  by deriving bounds analogous to \eqref{eq:fn_bounds}.

  Finally, from \eqref{eq:reidemeister-coincidence-g-f} we get
  $R^\infty(\varphi, \psi) = |w_m|_\infty$ which is (\ref{eq:growth-coinc}).
\end{proof}

\begin{remark}\label{thm:growth-rate-coincidence-reidemeister-finitely-generated}
  In the same manner as in the remark \ref{thm:growth-reidemeister-finitely-generated}
  if the group $G$ is actually finitely generated then the $p$-adic part
  in the product {\rm (\ref{eq:growth-coinc})} disappears.
  The explicit formula for the growth rate of Reidemeister coincidence numbers becomes
  \begin{equation}\label{eq:growth-rate-coincidence-reidemeister-finitely-generated}
    R^\infty(\varphi,\psi) =
    \prod_{k=1}^c \prod_{i=1}^{d} \max\{|\xi_{k,i}|_\infty, |\eta_{k,i}|_\infty\}
  \end{equation}
\end{remark}

\bigskip
\section{The Gauss congruences  for  the sequence   $\{R(\varphi^n,\psi^n)\}$ of the   Reidemeister coincidence numbers }\label{sec:congruences}

Let $\mu(n)$, $n\in\N$, be the {\em M\"obius function},
i.e.
$$
\mu(n) =
\left\{
\begin{array}{ll}
1 & \mbox{ if } n=1,  \\
(-1)^k & \mbox{ if }  n  \mbox{ is a product of } k \mbox{ distinct primes,}\\
0 & \mbox{ if }  n  \mbox{ is not square-free.}
\end{array}
\right.
$$
In number theory, the following Gauss congruence for integers holds:
$$
\sum_{d\mid n}\mu(n/d)\cdot a^{d}\equiv 0\mod n
$$
for any integer $a$ and any natural number $n$. In the case of a prime power $n=p^r$, the Gauss congruence turns into the Euler congruence. Indeed, for $n=p^r$ the M\"obius function $\mu(n/d)=\mu(p^r/d)$ is nonzero only in two cases: when $d=p^r$ and when $d=p^{r-1}$. Therefore, from the Gauss congruence we obtain the Euler congruence
$$
a^{p^r}\equiv a^{p^{r-1}}\mod p^r.
$$
When $(a,n)=1, n=p^r$, these congruences are equivalent to the following classical Euler's theorem:
$$
a^{\varphi(n)}\equiv 1\mod n.
$$
These congruences have been generalized from integers to some other mathematical invariants such as the traces of powers of all integer matrices $A$ and the Lefschetz numbers of iterations  of a map {see \cite{mp99,Z}}:
\begin{align}
\label{Gauss}
&\sum_{d\mid n}\mu(n/d)\cdot \tr(A^{d})\equiv 0\mod n,\\
\label{Euler}
&\tr(A^{p^r})\equiv\tr(A^{p^{r-1}})\mod p^r.
\end{align}
{A. Dold in \cite{Dold83} (see also \cite[Theorem 3.1.4]{JezMar}) proved by a geometric argument the following congruences \eqref{Dold} for the Lefschetz numbers of iterations of a map $f$ on a compact ANR $X$ and any natural number $n$

\begin{align}
\label{Dold}
\sum_{d\mid n}\mu(n/d)\cdot L(f^{d})\equiv 0\mod n .\tag{DL}
\end{align}
 These congruences are now called the Dold congruences.} It is also shown in \cite{mp99} (see also \cite[Theorem~9]{Z}) that the above congruences \eqref{Gauss}, \eqref{Euler} and \eqref{Dold} are equivalent.
 For more detailed discussion see \cite{ByszGraffWard}.

The following Gauss congruences  for the  Reidemeister numbers of iterations of a tame automorphism $\varphi$ of polycyclic-by-finite group are proven in \cite{polyc}
$$
\sum_{d\mid n}\mu(n/d)\cdot R(\varphi^{d})\equiv 0\mod n.
$$

It turns out that the Gauss congruences for Reidemeister coincidence numbers holds if
the Reidemeister coincidence zeta function is rational.

The \emph{Reidemeister coincidence zeta function} for a tame pair
$(\varphi, \psi)$ of  endomorphisms  of a group $G$
was defined in \cite{FelsKlop} as
\begin{equation}\label{eq:zeta-def}
  R_{\varphi, \psi}(z) := \exp \left( \sum_{k=1}^{\infty}\frac{R(\varphi^k, \psi^k)}{k} z^k \right).
\end{equation}
In this section we use the following  criterion for the rationality of the Reidemeister coincidence zeta function
for a  tame pair of  endomorphisms  of a  torsion-free nilpotent group of finite Pr\"ufer rank to
prove the Gauss congruences  for the Reidemeister coincidence numbers of iterations of these   endomorphisms.

\begin{theorem}[\protect{\cite[Theorem 1.4 (3)]{FelsKlop}}]\label{thm:reidemeister-rationality-klopsch}
  Let $\varphi, \psi$ be as in \rm{Theorem \ref{thm:growth-coinc} (c)}.
  Then the Reidemeister coincidence zeta function $R_{\varphi, \psi}(z)$ is either a rational function
  or it has a natural boundary as its radius of convergence.
  Furthermore, the latter occurs iff $|\xi_{k,i}|_p = |\eta_{k,i}|_p$ for some $k \in \left\{ 1, \dots, c \right\},
  p \in \mathbf{P}_k, i \notin I_k(p)$.
\end{theorem}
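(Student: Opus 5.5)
The proof splits according to whether the $p$-adic factor $f(n)$ from the proof of Theorem~\ref{thm:growth-coinc} is trivial. Recall the factorisation \eqref{eq:reidemeister-coincidence-g-f},
\[
R(\varphi^n,\psi^n)=g(n)\cdot f(n),\qquad g(n)=\sum_{j\in J}c_jw_j^{\,n},\quad c_j=\pm1,
\]
with
\[
f(n)=\prod_{k}\prod_{p\in\mathbf P_k}\prod_{i\in S_k(p)\setminus S_k^*(p)}\bigl|(\xi_{k,i}\eta_{k,i}^{-1})^n-1\bigr|_p .
\]
This factorisation uses two facts. For $i\in S^*_k(p)$ the non-Archimedean property \eqref{eq:padic-triangle} gives the exact equality $|\xi_{k,i}^n-\eta_{k,i}^n|_p=\max\{|\xi_{k,i}|_p,|\eta_{k,i}|_p\}^n$, which is absorbed into $g$. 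Since $\mathbf P_k$ is finite, $f$ is a finite product.

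\emph{Rational case.} Suppose $|\xi_{k,i}|_p\neq|\eta_{k,i}|_p$ for every $k$, every $p\in\mathbf P_k$ and every $i\notin I_k(p)$. Then $S_k(p)\setminus S^*_k(p)=\varnothing$ and $f\equiv1$. Substituting into \eqref{eq:zeta-def} and using $\sum_n w^nz^n/n=-\log(1-wz)$ gives
\[
R_{\varphi,\psi}(z)=\prod_{j\in J}(1-w_jz)^{-c_j},
\]
which is rational. The only thing to verify is that the expansion of $\prod_i|\xi_{k,i}^n-\eta_{k,i}^n|_\infty$ produces genuine exponentials, with signs $c_j$ independent of $n$. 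For complex pairs this is the pairing argument from the proof of Theorem~\ref{thm:growth-coinc}. For real pairs, $|\xi^n-\eta^n|_\infty=\delta_1^n-\delta_2^n$ requires the sign of $\xi^n-\eta^n$ to be fixed. I would handle this by noting that $|\xi|\neq|\eta|$ makes the sign $n$-periodic of period $2$, and then absorb the sign by replacing $\delta_2$ with $\pm\delta_2$.

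\emph{Natural boundary case.} Now some factor with $|\xi_{k,i}|_p=|\eta_{k,i}|_p$ survives. Put $x=\xi_{k,i}\eta_{k,i}^{-1}$. By the proof of Theorem~\ref{thm:growth-coinc}, $x$ is a $p$-adic unit and not a root of unity. I would use the standard structure of $|x^n-1|_p$: there is an $r$ such that $|x^n-1|_p=1$ for $r\nmid n$, and $|x^n-1|_p=\kappa\,|n|_p$ for $r\mid n$, for a suitable constant $\kappa$. This is essentially the $p$-adic logarithm computation in Lemma~\ref{thm:padic-log-geq-1/n}, done more precisely. Hence $f(n)$ is a finite combination of periodic functions times products $\prod_{p\in\mathcal S}|n|_p^{e_p}$ over a nonempty finite set $\mathcal S$ of primes.

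Next I would consider the logarithmic derivative $z\,R'_{\varphi,\psi}(z)/R_{\varphi,\psi}(z)=\sum_n R(\varphi^n,\psi^n)z^n$ and split it as $\sum_j c_j F(w_jz)$, where $F(u)=\sum_n f(n)u^n$. The terms with $|w_j|<|w_m|$ are holomorphic on a disc strictly larger than the disc of convergence $|z|<|w_m|^{-1}$. It therefore suffices to show that $F(w_mz)$ has the circle $|z|=|w_m|^{-1}$ as a natural boundary.

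The key step, and the main obstacle, is showing that $F(u)=\sum_n f(n)u^n$ is singular at every $p^a$-th root of unity $\zeta$ with $p\in\mathcal S$. These points are dense on $|u|=1$, so this gives a natural boundary. My first attempt would be to decompose $n$ by its $p$-adic valuation and write $F$ as a series $\sum_a p^{-ae}\,G_a(u^{p^a})$ of rational functions. One then shows that the radial limit toward $\zeta$ fails to exist: the exponent $a$ matching the order of $\zeta$ contributes a pole-like blow-up, and the others stay bounded. This is in the spirit of the Bell–Miles–Ward arguments. Alternatively one can invoke a Pólya–Carlson type result for series of this shape.

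Finally, I would rule out that $R_{\varphi,\psi}$ itself continues past the circle. A meromorphic continuation of $R_{\varphi,\psi}$ across an arc would give one for $zR'/R$, contradicting the natural boundary just established. Hence in this case $R_{\varphi,\psi}(z)$ has a natural boundary at its radius of convergence, which completes the dichotomy.
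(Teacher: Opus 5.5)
The paper does not prove this statement itself; it quotes \cite[Theorem 1.4(3)]{FelsKlop}. Your outline has the same structure as that source: the factorisation $R(\varphi^n,\psi^n)=g(n)f(n)$, rationality when $f\equiv1$, a unique dominant root $w_m$ of $g$, and a natural-boundary lemma of Bell--Miles--Ward type for $\sum_n f(n)w^nz^n$ when $f\not\equiv1$. Several parts are fine: the rational case, the reduction to $F(w_mz)$, and the passage from $zR'/R$ back to $R_{\varphi,\psi}$, since a dense set of singular points rules out even a meromorphic continuation across an arc. Your sign fix for real pairs is just the paper's choice $\delta_{2,k,i}=\xi_{k,i}\eta_{k,i}/\delta_{1,k,i}$. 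The classical P\'olya--Carlson theorem is not available here: the $f(n)$ are not integers, and the integer series $\sum_n R(\varphi^n,\psi^n)z^n$ has radius $1/|w_m|$, which in general is not $1$.

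The gap is in the step you call the main obstacle, and the mechanism you sketch for it is wrong.

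\emph{The model case already shows it.} Take $f(n)=|n|_p$, so $F(u)=\sum_{a\ge0}p^{-a}G(u^{p^a})$ with $G(v)=\frac{v}{1-v}-\frac{v^p}{1-v^p}$. Let $\zeta$ be a primitive $p^b$-th root of unity, $b\ge1$. Then $\zeta^{p^a}=1$ for every $a\ge b$, so every term with $a\ge b$ has a pole at $u=\zeta$, and so does the term with $a=b-1$. It is not the case that one term blows up while the others stay bounded. The radial behaviour is governed by $(1-t)F(t\zeta)\to M(\zeta):=\lim_N N^{-1}\sum_{n\le N}f(n)\zeta^n$, which aggregates infinitely many residues. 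Here $M(\zeta)=-p^{2-2b}/(p+1)\ne0$, but only because the positive contributions from $a\ge b$ are outweighed by the negative one from $a=b-1$.

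\emph{The actual $f$ is harder, and your formula for it is wrong.} The claim $|x^n-1|_p=\kappa|n|_p$ for $r\mid n$ is false in general. Take $\varphi=\times3$, $\psi=\mathrm{id}$ on $\Z[1/2]$, which satisfies all hypotheses and lies in the natural-boundary case. Then $f(n)=|3^n-1|_2$, which equals $1/2$ for odd $n$ and $|n|_2/4$ for even $n$, so no single $\kappa$ works. In general $f$ is a combination of periodic functions times $\prod_p|n|_p^{e_p}$. This involves exceptional small valuations, several factors per prime, several primes, and periods $r_q$ that may be divisible by $p$. You must show $M(\zeta)\ne0$ on a dense set of $\zeta$ despite possible cancellation among all of these.

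That non-vanishing is exactly the content of the Bell--Miles--Ward lemma on which the cited proof rests. Either invoke it explicitly for functions of this shape, or carry out the mean-value computation in that generality. As written, the natural-boundary half of the dichotomy is not established.
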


To prove Gauss congruences under assumption of rationality of the Reidemeister coincidence zeta function
we follow \cite[Theorem~2.1]{BaBo} and \cite[Theorem 3.1.23, Proposition~3.1.12]{JezMar}.

\begin{lemma}\label{thm:rational-zeta-coefficients-sum}
  $R_{\varphi, \psi}(z)$ is rational iff for every $k \in \N$ we have
  \begin{equation}\label{eq:coincidence-sum}
    R(\varphi^k, \psi^k) = \sum_{i=1}^{r} \chi_i \lambda_i^k
  \end{equation}
  where $r \in \N, \ \chi_i \in \Z$ and $\lambda_i \in \C$ are distinct algebraic integers.
\end{lemma}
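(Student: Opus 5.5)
We prove the two implications separately. Both rest on the identity $z\frac{d}{dz}\log R_{\varphi,\psi}(z)=\sum_{k\ge1}R(\varphi^k,\psi^k)z^k$, which links the coefficients of the zeta function to the logarithmic derivative of a rational function. Throughout we use that every $R(\varphi^k,\psi^k)$ is a (positive) integer, since the pair is tame.

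For ($\Leftarrow$), assume $R(\varphi^k,\psi^k)=\sum_{i=1}^r\chi_i\lambda_i^k$ with $\chi_i\in\Z$. Then
\[
\sum_{k\ge1}\frac{R(\varphi^k,\psi^k)}{k}z^k=\sum_{i=1}^r\chi_i\sum_{k\ge1}\frac{(\lambda_i z)^k}{k}=-\sum_{i=1}^r\chi_i\log(1-\lambda_i z).
\]
Exponentiating gives $R_{\varphi,\psi}(z)=\prod_{i=1}^r(1-\lambda_i z)^{-\chi_i}$. This is a rational function because the exponents $\chi_i$ are integers. The integrality of the $\lambda_i$ is not needed for this direction.

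For ($\Rightarrow$), suppose $R_{\varphi,\psi}(z)=P(z)/Q(z)$ with $P(0)=Q(0)=1$, which holds since $R_{\varphi,\psi}(0)=1$. Factor over $\C$ as $P(z)=\prod_j(1-\alpha_j z)$ and $Q(z)=\prod_l(1-\beta_l z)$, with the $\alpha_j,\beta_l$ nonzero. The function $z\frac{d}{dz}\log$ turns the quotient into $\sum_l\frac{\beta_l z}{1-\beta_l z}-\sum_j\frac{\alpha_j z}{1-\alpha_j z}$. Comparing coefficients of $z^k$ gives $R(\varphi^k,\psi^k)=\sum_l\beta_l^k-\sum_j\alpha_j^k$. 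Collecting equal values yields $\sum_i\chi_i\lambda_i^k$ with distinct $\lambda_i$ and integer multiplicities $\chi_i$; after cancelling common factors of $P$ and $Q$ we may take every $\chi_i\neq0$.

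The main obstacle is showing that the $\lambda_i$ are algebraic integers. We argue as follows. The power series $\exp(\sum_k a_kz^k/k)$ with $a_k\in\Z$ has integer coefficients. To see this, first note that the rational sequence $a_k=R(\varphi^k,\psi^k)$ satisfies the Gauss congruences: this follows from the integrality of the rational function's Newton sums, which should be checked via Dwork's/Dieudonné's lemma. Here there is a risk of circularity, since these congruences are what the paper ultimately wants to prove.

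To avoid that circularity, we instead use Fatou's lemma. A power series in $\Z[[z]]$ that represents a rational function can be written as $P/Q$ with $P,Q\in\Z[z]$ and $Q(0)=1$. We apply this to the series $z\,\frac{d}{dz}\log R_{\varphi,\psi}(z)=\sum_k a_k z^k$, which has integer coefficients and is rational (it is $zR'/R$). Reduced to lowest terms with $Q(0)=1$, Fatou's lemma makes $Q$ integral, and its reciprocal roots are exactly the poles $1/\lambda_i$ of $\sum_i\chi_i\lambda_i z/(1-\lambda_i z)$. These are all the $\lambda_i$, since $\chi_i\neq0$ and the $\lambda_i$ are distinct. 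Because $Q(z)=\prod(1-\lambda_i z)\in\Z[z]$, each $\lambda_i$ is a root of the monic integer polynomial $z^{\deg Q}Q(1/z)$, hence an algebraic integer.
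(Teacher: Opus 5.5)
Your proof is correct and follows the paper's argument. For ($\Leftarrow$) you use the same product formula, and for ($\Rightarrow$) you use the logarithmic derivative together with Fatou's lemma applied to $\sum_k R(\varphi^k,\psi^k)z^k$. Your reduction to lowest terms with every $\chi_i\neq 0$ usefully justifies the paper's ``easy to check'' claim that every $\lambda_i$ is a reciprocal root of the integral denominator.

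You should delete the abandoned paragraph on $\exp(\sum_k a_k z^k/k)$. Read literally, its opening claim is false for arbitrary integer sequences (take $a_1=1$ and $a_k=0$ for $k\geq 2$, which gives $e^z$), and your final argument does not use it.
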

\begin{proof}
  $\Leftarrow$)
  Let us split all the $\chi_i$ into two sets according to their signs.
  We have $r^+$ positive $\chi_i^+ = \chi_i$ and $r^-$ positive $\chi_i^- = -\chi_i$ where $r^+ + r^- = r$.
  Denote $\lambda_i$ as $\lambda_i^+$ or $\lambda_i^-$ with respect to the sign of corresponding $\chi_i$.
  With this notation we have
  \[
    \sum_{i=1}^{r} \chi_i \lambda_i^k \ = \ \sum_{i=1}^{r^+} \chi_i^+ {\lambda_i^+}^k \ - \ \sum_{i=1}^{r^-} \chi_i^- {\lambda_i^-}^k.
  \]
  Using this equality in (\ref{eq:coincidence-sum}), properties of $\exp$
  and the identity $\ln(1/(1-t)) = \sum_{n=1}^\infty t^n/n$ we get
  \[
    R_{\varphi, \psi}(z) =
    \frac{\prod_{i=1}^{r^+} \left( \exp \left( \sum_{k=1}^\infty \frac{(\lambda_i^+ z)^k}{k} \right) \right)^{\chi_i^+}} {\prod_{i=1}^{r^-} \left( \exp \left( \sum_{k=1}^\infty \frac{(\lambda_i^- z)^k}{k} \right) \right)^{\chi_i^-}} =
    \frac{\prod_{i=1}^{r^-} \left( 1-\lambda_i^- z \right)^{\chi_i^-}} {\prod_{i=1}^{r^+} \left( 1-\lambda_i^+ z \right)^{\chi_i^+}}
  \]
  and the zeta function is rational.

  $\Rightarrow$)
  Assume $R_{\varphi, \psi}(z)$ is rational.
  By definition $R_{\varphi, \psi}(0) = 1 \neq 0$ therefore we can write
  \[
    R_{\varphi, \psi}(z) = \frac{\prod_i (1 - \beta_i z)}{\prod_j (1 - \gamma_j z)}
  \]
  where $\beta_i, \gamma_j$ are non-zero roots of polynomials and the products are finite.
  Combine the roots into one sequence $\{\lambda_i\}$ where for $i \neq j$ there is $\lambda_i \neq \lambda_j$.
  Taking multiplication into account we can write
  \begin{equation}\label{eq:zeta-product}
    R_{\varphi, \psi}(z) = \prod_i \left( 1 - \lambda_i z \right)^{\chi_i}, \quad \chi_i \in \Z.
  \end{equation}

  Let us consider the logarithmic derivative
  \begin{equation}\label{eq:S-def}
    S(z) := \frac{d}{dz} \ln R_{\varphi, \psi}(z) = \sum_{k=1}^\infty R(\varphi^k, \psi^k) \cdot z^{k-1}.
  \end{equation}
  From (\ref{eq:zeta-product}) we have
  \begin{align}\label{eq:S=rational}
  \begin{split}
    S(z) &= \frac{d}{dz} \left( \sum_i \chi_i \ln(1-\lambda_i z) \right)
    \ = \ \sum_i \chi_i \frac{-\lambda_i}{1-\lambda_i z} \\
    &= \sum_i (-\chi_i \lambda_i) \sum_{k=0}^\infty \lambda_i^k z^k
    \ = \ \sum_{k=1}^\infty \left( \sum_i -\chi_i \lambda_i^k \right) z^{k-1}.
  \end{split}
  \end{align}
  Comparing coefficients at $z^{k-1}$ we get $R(\varphi^k, \psi^k) = \sum_i \chi_i \lambda_i^k$
  up to the signs of $\chi_i$,
  the~sum is finite, $\chi_i \in \Z$ and $\lambda_i$ are algebraic numbers.

  We are going to prove that $\lambda_i$ are actually algebraic integers.
  From (\ref{eq:S-def}) we know that $S(z)$ is given by a power series with integer coefficients
  and from (\ref{eq:S=rational}) that it is a rational function.
  Therefore $S(z) = u(z) / v(z)$, where $u, v \in \Q[z]$.
  Fatou lemma (see \cite[Lemma~3.1.31]{JezMar}) says that $u(z), v(z)$ in this setting have the forms
  \[
    u(z) = \sum_{i=0}^s a_i z^i, \quad v(z) = 1 + \sum_{j=1}^q b_j z^j
  \]
  where the coefficients $a_i, b_j \in \Z$.
  Comparing this with (\ref{eq:S=rational}) we get
  \[
    \frac{\sum_{i=0}^s a_i z^i}{1 + \sum_{j=1}^q b_j z^j} \ = \ S(z) \
    = \ \sum_i \frac{-\chi_i\lambda_i}{1-\lambda_i z}
  \]
  and it is easy to check that all $\lambda_i$ are roots of the polynomial
  $\tilde{v}(z) = z^q + \sum_{j=1}^q b_j z^{q-j}$.
  Hence they are algebraic integers.
\end{proof}

\begin{lemma}\label{thm:coefficients-sum-map-bouquet}
  There exists a map $f: X \to X$ of a compact euclidean neighborhood retract such that
  $L(f^k) = R(\varphi^k, \psi^k)$ for every $k \in \N$ iff
  the equality \rm{\eqref{eq:coincidence-sum}} holds for every $k \in \N$
\end{lemma}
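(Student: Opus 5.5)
The plan follows \cite[Theorem 3.1.23, Proposition 3.1.12]{JezMar} and \cite[Theorem~2.1]{BaBo}. The statement is really about the sequence $a_k := R(\varphi^k,\psi^k)$: a sequence of integers is realised as $\{L(f^k)\}$ for a self-map of a compact ENR iff it has the form \eqref{eq:coincidence-sum}. For the direction ``$\Rightarrow$'' we use the standard Lefschetz trace formula. Rational homology of a compact ENR is finitely generated, so
\[
L(f^k)=\sum_{q}(-1)^q \tr\bigl((f_{*q})^k\bigr),
\]
where $f_{*q}$ acts on $H_q(X;\Q)$. Moreover $f_{*q}$ preserves the image of integral homology, which is a lattice, so it is represented by an integer matrix. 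Hence $\tr((f_{*q})^k)=\sum_j \mu_{q,j}^k$, with $\mu_{q,j}$ the eigenvalues, which are algebraic integers. Collecting equal eigenvalues across degrees with signed multiplicities $\chi_i\in\Z$ gives $L(f^k)=\sum_i\chi_i\lambda_i^k$ with distinct algebraic integers $\lambda_i$. This is exactly \eqref{eq:coincidence-sum} once $L(f^k)=R(\varphi^k,\psi^k)$.

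For ``$\Leftarrow$'' we construct $X$ as a finite wedge of spheres. First group the $\lambda_i$ into full Galois orbits. Since $a_k\in\Z$ for all $k$, applying a Galois automorphism $\sigma$ to $\sum_i\chi_i\lambda_i^k$ leaves the sum unchanged. By linear independence of the distinct exponential sequences $k\mapsto\lambda^k$ (Vandermonde), $\chi_{\sigma(i)}=\chi_i$, so the coefficients are constant on Galois orbits. The sum therefore splits as $\sum_{O}\chi_O\tr(A_O^k)$. Here $O$ runs over orbits and $A_O$ is the companion matrix of the minimal polynomial of the orbit. That polynomial is monic with integer coefficients because the $\lambda_i$ are algebraic integers, so $A_O\in M_{m_O}(\Z)$.

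Now realise each term. For an integer matrix $A$ of size $m$, take the wedge of $m$ copies of $S^2$ and a cellular map inducing $A$ on $H_2\cong\Z^m$; such a map exists by choosing degrees on the wedge summands via maps $S^2\to\bigvee S^2$. Its Lefschetz number of the $k$-th iterate is $1+\tr(A^k)$. Using odd spheres $S^3$ instead gives $1-\tr(A^k)$. Take $|\chi_O|$ copies of the appropriate construction, using even spheres if $\chi_O>0$ and odd if $\chi_O<0$, and wedge all of these together, with everything basepoint-preserving. The reduced homologies add and $H_0$ contributes $1$, so
\[
L(f^k)=1+\sum_O\chi_O\tr(A_O^k)=1+a_k .
\]
To remove the extra $1$, add one more summand $S^1$ with a self-map of degree $1$. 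This contributes $-\tr(1^k)=-1$ in $H_1$, so $L(f^k)=a_k$. The finite wedge of spheres is a compact polyhedron and hence a compact ENR.

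The main obstacle is the Galois-orbit step: we must ensure that the realising matrices can be taken integral. This needs both the integrality of the $a_k$ and the independence argument. Lemma \ref{thm:rational-zeta-coefficients-sum} already supplies algebraic integrality of the $\lambda_i$, which is what makes the companion matrices integral. A secondary point is checking that wedge maps realise arbitrary integer matrices on $H_q$; this is standard via cellular approximation and Hopf degree.
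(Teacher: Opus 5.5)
Your proof is correct and follows the paper's route: Galois-invariance of the coefficients via the Vandermonde/independence argument, companion matrices of the minimal polynomials of the Galois orbits, and a wedge of spheres realising $\tr A_e^k - \tr A_o^k$, with one identity circle cancelling the $H_0$ contribution. The only differences are cosmetic: you realise the negative part on $H_3$ of a wedge of $S^3$'s rather than on $H_1$ of extra circles as in Lemma \ref{thm:integer-matrix-map-bouquet}, and in the forward direction you spell out the integral-lattice argument (and the merging of equal eigenvalues across degrees) that the paper leaves implicit.
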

\begin{proof}
  $\Rightarrow$)
  Denote by $H_i(f): H_i(X;\Q) \to H_i(X;\Q)$ the map induced on $i$-th homology space and put
  \[
    A_e := \bigoplus_{i-even} H_i(f), \quad A_o := \bigoplus_{i-odd} H_i(f).
  \]
  With this notation we can write
  \[
    R(\varphi^k, \psi^k) = L(f^k) = \tr A_e^k - \tr A_o^k.
  \]
  Let $\lambda_i^+, \lambda_j^-$ be all distinct eigenvalues of $A_e, A_o$ respectively,
  each of multiplicity $\chi_i^+, \chi_j^-$ respectively.
  Then
  \[
    R(\varphi^k, \psi^k) \ = \ \sum_{i} \chi_i^+ {\lambda_i^+}^k \ - \ \sum_j \chi_j^- {\lambda_j^-}^k,
  \]
  where $\chi_i^+, \chi_j^- \in \Z$ and all $\lambda_i^+, \lambda_j^-$ are distinct algebraic integers
  (as roots of characteristic polynomials of integer matrices).

  $\Leftarrow$)
  Assume that for every $k \in \N$ the equality $$R(\varphi^k, \psi^k) = \sum_{i=1}^{r} \chi_i \lambda_i^k$$
  where $r \in \N, \ \chi_i \in \Z$ and $\lambda_i \in \C$ are distinct algebraic integers, holds.
  It turns out that if $\lambda_i, \lambda_j$ are algebraically conjugate then $\chi_i = \chi_j$.
  Indeed, denote by $v(z)$ an irreducible polynomial roots of which are $\lambda_i, \lambda_j$.
  Denote by $\Sigma$ the field of $v(z)$ and let $\sigma$ be an element of the Galois group of $\Sigma$.
  By definition $\sigma$ is the identity function on $\Q$.
  It is well-known that $\sigma$ acts as a~permutation of the set of roots of $v(z)$,
  denote the induced permutation of indices by $\sigma$ as well,
  i.e. $\sigma(\lambda_i) = \lambda_{\sigma(i)}$.
  We get
  \[
    \sigma \left( R(\varphi^k, \psi^k) \right)
    = \sigma \left( \sum_{i=1}^r \chi_i \lambda_i^k \right)
    = \sum_{i=1}^r \chi_i \sigma(\lambda_i)^k
    = \sum_{i=1}^r \chi_i \lambda_{\sigma(i)}^k
    = \sum_{i=1}^r \chi_{\sigma^{-1}(i)} \lambda_i^k.
  \]
  On the other hand a Reidemeister coincidence number is an integer
  therefore $\sigma \left( R(\varphi^k, \psi^k) \right) = R(\varphi^k, \psi^k)$
  and for all $k \in \N$ we get
  \[
    \sum_{i=1}^r \chi_i \lambda_i^k \ = \ \sum_{i=1}^r \chi_{\sigma^{-1}(i)} \lambda_i^k.
  \]
  This equality holds especially for $k = 1, \dots r$, let us write it in a matrix form:
  \[
    \begin{pmatrix}
        \lambda_1   & \dots  & \lambda_r \\
        \vdots      & \ddots & \vdots    \\
        \lambda_1^r & \dots  & \lambda_r^r
    \end{pmatrix}
    \begin{pmatrix}
        \chi_1 \\ \vdots \\ \chi_r
    \end{pmatrix}
    =
    \begin{pmatrix}
        \lambda_1   & \dots  & \lambda_r \\
        \vdots      & \ddots & \vdots    \\
        \lambda_1^r & \dots  & \lambda_r^r
    \end{pmatrix}
    \begin{pmatrix}
        \chi_{\sigma^{-1}(1)} \\ \vdots \\ \chi_{\sigma^{-1}(r)}
    \end{pmatrix}.
  \]
  Since all $\lambda_i$ are distinct, the matrix is invertible (as the Vandermonde matrix),
  hence $\chi_i = \chi_{\sigma^{-1}(i)}$ for all $i$ and for arbitrary $\sigma$.
  We know that for every pair $\lambda_i, \lambda_j$ there is an automorphism $\sigma$
  such that  $\sigma(\lambda_i) = \lambda_j$.
  It means that $\chi_i = \chi_j$ whenever $\lambda_i, \lambda_j$ are algebraically conjugate.

  Now let $v(z) = \prod_{i=1}^r (\lambda_i - z)$ be a polynomial roots of which are all $\lambda_i$.
  Let
  \begin{align*}
    v(z) = \prod_{\alpha} v_{\alpha}(z)
    = \prod_{\alpha} \left( z^{r_\alpha} + b_1^{(\alpha)} z^{r_\alpha-1}  + \dots +
      b_{r_\alpha - 1}^{(\alpha)} z + b_{r_\alpha}^{(\alpha)} \right),
      \quad b_i^{(\alpha)} \in \Z
  \end{align*}
  be its decomposition into irreducible polynomials.
  We split the set $A = A^+ \cup A^-$ of indices $\alpha$ into two subsets according to the sign of $\chi_\alpha$.
  We can write
  \begin{align}\label{eq:R=sum-of-eigenvalues}
  \begin{split}
    R(\varphi^k, \psi^k)
    &= \sum_{\alpha \in A} \chi_\alpha \left( \sum_{i=1}^{r_\alpha} \left(\lambda^{(\alpha)}_i\right)^k \right) \\
    &= \sum_{\alpha \in A^+} \chi_\alpha \left( \sum_{i=1}^{r_\alpha} \left(\lambda^{(\alpha)}_i\right)^k \right)
      - \sum_{\alpha \in A^-} |\chi_\alpha| \left( \sum_{i=1}^{r_\alpha} \left(\lambda^{(\alpha)}_i\right)^k \right).
  \end{split}
  \end{align}
  Consider the integer matrix
  \begin{equation*}
    M_\alpha := \begin{pmatrix}
        0      & 0      & \dotsb & 0      & - b_{r_\alpha}^{(\alpha)} \\
        1      & 0      & \dotsb & 0      & - b_{r_{\alpha-1}}^{(\alpha)} \\
        \vdots & \vdots &        & \vdots & \vdots \\
        0      & 0      & \dotsb & 0      & - b_{2}^{(\alpha)} \\
        0      & 0      & \dotsb & 1      & - b_{1}^{(\alpha)} \\
    \end{pmatrix}
  \end{equation*}
  Straightforward computation using the Laplace expansion shows that $\det(zI-M_\alpha) = v_\alpha(z)$
  therefore $\{\lambda_i^{(\alpha)}\}$ are the eigenvalues of $M_\alpha$.
  Continuing (\ref{eq:R=sum-of-eigenvalues}) we write
  \begin{equation*}
     R(\varphi^k, \psi^k)
     = \sum_{\alpha \in A^+} \chi_\alpha \tr M_\alpha^k - \sum_{\alpha \in A^-} |\chi_\alpha| \tr M_\alpha^k
  \end{equation*}
  Denote
  \begin{equation*}
    A_e := \bigoplus_{\alpha \in A^+} \bigoplus_{i=1}^{\chi_\alpha} M_\alpha, \quad
    A_o := \bigoplus_{\alpha \in A^-} \bigoplus_{i=1}^{|\chi_\alpha|} M_\alpha.
  \end{equation*}
  Using this notation we finally have
  \[
    R(\varphi^k, \psi^k) = \tr A_e^k - \tr A_o^k.
  \]
  The thesis follows from the next Lemma \ref{thm:integer-matrix-map-bouquet}.
\end{proof}

The next lemma is a result of \cite[Theorem 2.1]{BaBo}.
We present a formulation more convenient for our purposes.

\begin{lemma}[\protect{\cite[Proposition 3.1.12]{JezMar}}]\label{thm:integer-matrix-map-bouquet}
  Let X be the bouquet of $n_1$ circles and $n_2$ \rm{2}-spheres, $n_1 \geq 2$.
  Then for each pair of matrices $A_e \in M_{n_2}(\Z), A_o \in M_{n_1-1}(\Z)$
  there exists a self-map $f: X \to X$ satisfying
  \[
    L(f^k) = \tr A_e^k - \tr A_o^k
  \]
  for every $k \in \N$.
\end{lemma}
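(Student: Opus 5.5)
The plan is the classical Babenko--Bogatyi construction: realize $\tr A_o^k$ on the circles and $\tr A_e^k$ on the $2$-spheres, then read the Lefschetz numbers off from homology. Write $X = \bigvee_{j=1}^{n_1} S^1_j \vee \bigvee_{l=1}^{n_2} S^2_l$ with base point $x_0$. Then $H_0(X;\Q)=\Q$, $H_1(X;\Q)\cong\Q^{n_1}$ and $H_2(X;\Q)\cong \Q^{n_2}$. For any self-map $f$ fixing $x_0$ we have
\[
  L(f^k) = 1 - \tr H_1(f)^k + \tr H_2(f)^k .
\]
The goal is therefore to build $f$ with $H_2(f)=A_e$ and $H_1(f)$ block upper triangular with diagonal blocks $A_o$ and the $1\times1$ block $(1)$. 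This gives $\tr H_1(f)^k = \tr A_o^k + 1$. The constant $1$ from $H_0$ then cancels against this extra eigenvalue $1$, and we get $L(f^k)=\tr A_e^k-\tr A_o^k$.

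\emph{On the circles.} Let $a_1,\dots,a_{n_1}$ be the free generators of $\pi_1(X,x_0)=F_{n_1}$. Put $f(S^1_{n_1})=$ identity on the last circle, so $a_{n_1}\mapsto a_{n_1}$. For $j<n_1$, let $f$ send $S^1_j$ to a loop representing a word in $a_1,\dots,a_{n_1}$ whose abelianization is the $j$-th column of $A_o$, padded by a suitable power of $a_{n_1}$. Any word will do, since a map from a circle into a connected complex is determined up to homotopy by a free homotopy class. This already produces the block-triangular matrix on $H_1$. The hypothesis $n_1\ge 2$ is not strictly needed here, but we keep it so the statement matches \cite{JezMar}; one extra circle is what the block $(1)$ requires.

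\emph{On the spheres.} Since $\pi_2(X)$ contains $\pi_2(\bigvee S^2_l)$, whose abelianization at the level of $H_2$ is $\Z^{n_2}$, we send $S^2_l$ into $\bigvee_m S^2_m$ by a map with degree $(A_e)_{ml}$ onto the $m$-th sphere. This is realized by pinching $S^2$ into a wedge of spheres and applying maps of the prescribed degrees. These maps fix $x_0$, so they glue with the circle part into a continuous $f:X\to X$. Because $f$ maps the $2$-skeleton part into the spheres, $H_2(f)=A_e$; $H_1(f)$ is as arranged above; and $H_0(f)=1$.

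Finally, observe that $H_*(f^k)=H_*(f)^k$, and compute
\[
  L(f^k) = 1-(\tr A_o^k+1)+\tr A_e^k = \tr A_e^k-\tr A_o^k .
\]
The main point needing care is the bookkeeping on $H_1$. The spare circle must carry the eigenvalue $1$ that cancels the $H_0$ contribution, and the off-diagonal entries must be placed so that this block really is triangular, so that traces of all powers split as claimed. Everything else is routine.
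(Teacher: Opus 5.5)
Your proposal is correct and is essentially the paper's proof: $f$ is the identity on a spare circle, wedged with maps realizing $A_o$ on the remaining $n_1-1$ circles and $A_e$ on the spheres, so that $L(f^k)=1-(\tr A_o^k+1)+\tr A_e^k$. The only differences are that you spell out the realization the paper calls ``the obvious way'' (based loops representing prescribed words, and pinch maps followed by degree maps on $S^2$), and that your optional padding by powers of $a_{n_1}$ adds an off-diagonal row to $H_1(f)$ which does not change the traces of its powers.
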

\begin{proof}
  Let $X_2 := \bigvee_{1}^{n_2} S^2$ be the bouquet of $n_2$ 2-dimensional spheres
  and $X_1 := \bigvee_{1}^{n_1-1} S^1$ be the bouquet of $n_1-1$ circles.
  Then $X = S^1 \vee X_1 \vee X_2$.

  Let $f_2: X_2 \to X_2, \ f_1: X_1 \to X_1$ be the maps defined in the obvious way
  by the~matrices $A_e, A_o$ respectively and let $f_0 = \Id : S^1 \to S^1$ be the identity map.
  We claim that $f := f_0 \vee f_1 \vee f_2$ is the map satisfying the hypothesis.

  We will denote a map induced by $f$ on $i$-th homology space by $H_i(f): H_i(X, \Q) \to H_i(X, \Q)$.
  Note that we have
$$      H_2(f_2) = A_e, \quad H_1(f_2) = 0, \quad H_0(f_2) = 1,$$
$$      H_1(f_1) = A_o, \quad H_0(f_1) = 1,$$
$$      H_1(f_0) = 1,   \quad H_0(f_0) = 1.$$
  From the definition of the Lefschetz number and properties of the trace of a matrix
  \begin{align*}
      L(f^k) &= \sum_{i=0}^2 (-1)^i \Tr(H_i(f^k))\\
      &= 1 - (\Tr A_o^k + 1) + \Tr A_e^k\\
      &= \Tr A_e^k - \Tr A_o^k
  \end{align*}
  and the lemma is proved.
\end{proof}

\bigskip
Finally, the main result of this paragraph is

\begin{theorem}\label{thm:rational-congruences}
  Let $\varphi, \psi$ be a tame pair of endomorphisms of a torsion-free nilpotent group of finite Pr\"ufer rank.
  If the Reidemeister coincidence zeta function $R_{\varphi, \psi}(z)$ is rational then
  for every $n \in \N$ we have the Gauss congruences
  \begin{equation}\label{eq:congruence-coincidence}
    \sum_{k|n} \mu(n/k) R(\varphi^k, \psi^k) \equiv 0 \mod n.
  \end{equation}
\end{theorem}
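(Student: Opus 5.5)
The plan is to chain together the three lemmas proved just above and then invoke Dold's congruences \eqref{Dold}.

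First, assume $R_{\varphi,\psi}(z)$ is rational. Lemma~\ref{thm:rational-zeta-coefficients-sum} then gives, for every $k\in\N$,
\[
R(\varphi^k,\psi^k)=\sum_{i=1}^r \chi_i\lambda_i^k,
\]
with $\chi_i\in\Z$ and distinct algebraic integers $\lambda_i$. Tameness of the pair makes every $R(\varphi^k,\psi^k)$ a finite integer, so the exponential generating series \eqref{eq:zeta-def} is well defined. This is the only place the group-theoretic hypotheses (torsion-free nilpotent, finite Pr\"ufer rank, tame pair) enter; they merely guarantee that the setting of the lemmas applies.

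Second, feed this representation into Lemma~\ref{thm:coefficients-sum-map-bouquet}. That lemma first uses Galois invariance to group the $\lambda_i$ into full conjugacy classes carrying equal multiplicities. It then builds integer companion matrices $M_\alpha$ and assembles the block matrices $A_e$ and $A_o$ with $R(\varphi^k,\psi^k)=\tr A_e^k-\tr A_o^k$. Finally, Lemma~\ref{thm:integer-matrix-map-bouquet} realises this by a self-map $f$ of a bouquet $X$ of $n_1$ circles and $n_2$ two-spheres. Take $n_2$ equal to the size of $A_e$ and $n_1-1$ equal to the size of $A_o$. If $A_o$ or $A_e$ is empty, pad it with a zero block (adding summands $0^k=0$), so that $n_1\ge 2$ and the sizes match. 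The result is
\[
L(f^k)=R(\varphi^k,\psi^k)\quad\text{for all }k\in\N.
\]
The space $X$ is a finite CW complex, hence a compact ENR/ANR.

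Third, apply Dold's theorem \eqref{Dold} to $f$ on the compact ANR $X$:
\[
\sum_{k\mid n}\mu(n/k)L(f^k)\equiv 0 \mod n.
\]
Substituting $L(f^k)=R(\varphi^k,\psi^k)$ gives \eqref{eq:congruence-coincidence}.

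As a cross-check one can bypass topology. By the equivalence of \eqref{Gauss} and \eqref{Dold} cited from \cite{mp99}, the Gauss congruences for $\tr A_e^k$ and for $\tr A_o^k$ each hold separately for integer matrices, and they are linear in the sequence, so they hold for the difference.

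The main obstacle is the step from rationality to integrality and Galois-stability of the data $(\chi_i,\lambda_i)$. That is where integer matrices come from, and it rests on the Fatou lemma and the Vandermonde argument. Both are already carried out in the preceding lemmas, so in the proof itself I would mainly have to check the degenerate cases (an empty $A_e$ or $A_o$, and $n_1\ge2$) and confirm the sign conventions in Lemma~\ref{thm:rational-zeta-coefficients-sum}. Those signs are harmless, since replacing $\chi_i$ by $-\chi_i$ keeps them integral.
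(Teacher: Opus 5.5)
Your proposal is correct and is the paper's proof. Rationality gives \eqref{eq:coincidence-sum} by Lemma~\ref{thm:rational-zeta-coefficients-sum}; Lemmas~\ref{thm:coefficients-sum-map-bouquet} and~\ref{thm:integer-matrix-map-bouquet} then realise $R(\varphi^k,\psi^k)$ as $L(f^k)$ for a self-map of a bouquet, and \eqref{Dold} finishes the argument. Your handling of the degenerate sizes ($n_1\ge 2$, empty $A_e$ or $A_o$) and your topology-free cross-check, applying \eqref{Gauss} to $\tr A_e^k$ and $\tr A_o^k$ separately, are both sound refinements of that argument.
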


\begin{proof}
  Lemma \ref{thm:rational-zeta-coefficients-sum} and Lemma \ref{thm:coefficients-sum-map-bouquet}
  show that $R_{\varphi, \psi}(z)$ is rational iff for every $k \in \N$ we have $L(f^k) = R(\varphi^k, \psi^k)$.
  Direct substitution in the Dold congruences \eqref{Dold} finishes the proof.
\end{proof}

\begin{example}
  Consider the group $\mathbb{Z}$ written additively and its two endomorphisms
  \[
    \varphi: x \mapsto d_\varphi x, \quad \psi: x \mapsto d_\psi x, \quad d_\varphi, d_\psi \in \mathbb{Z}.
  \]
  By definition elements $x,y \in \mathbb{Z}$ are in the same Reidemeister coincidence class
  of $(\varphi^k, \psi^k)$ if there exists $g \in \mathbb{Z}$ such that
  \begin{align*}
    y \ = \ d_\psi^k g + x - d_\varphi^k g \ = \ x + g(d_\psi^k - d_\varphi^k)
  \end{align*}
  therefore
  \begin{equation*}
    R(\varphi^k, \psi^k) = \begin{cases}
      | d_\psi^k - d_\varphi^k |, &d_\psi^k \neq d_\varphi^k,\\
      \infty, &\text{otherwise}
    \end{cases}.
  \end{equation*}
  Therefore the pair $(\varphi, \psi)$ is tame iff $|d_\varphi| \neq |d_\psi|$.
  Let the pair be tame in what follows.
  Denote $d_1 = \max\left( |d_\varphi|, |d_\psi| \right), d_2 = d_\varphi d_\psi / d_1$.
  With this notation $|d_\psi^k - d_\varphi^k| = d_1^k - d_2^k$ and it is easy to compute
  the Reidemeister coincidence zeta function
  \begin{align*}
    R_{\varphi, \psi}(z) &= \exp \left( \sum_{k=1}^\infty \frac{R(\varphi^k, \psi^k)}{k} z^k \right)
    = \exp \left( \sum_{k=1}^\infty \frac{d_1^k - d_2^k}{k} z^k \right) \\
    &= \exp \big( \ln(1-d_2z) - \ln(1-d_1z) \big) = \frac{1-d_2z}{1-d_1z}
  \end{align*}
  which turns out to be rational.
  Theorem \ref{thm:rational-congruences} states that (\ref{eq:congruence-coincidence}) holds in this case.
  It can be also easily seen directly because
  \begin{align*}
    \sum_{k|n} \mu(n/k) R(\varphi^k, \psi^k) \ = \ \sum_{k|n} \mu(n/k) d_1^k - \sum_{k|n} \mu(n/k) d_2^k
    \ \equiv \ 0 - 0 \mod n
  \end{align*}
  by the original Gauss congruences for integers $d_1, d_2$.
\end{example}

\bigskip
\section{Nielsen coincidence zeta function, the growth rate  and the Gauss congruences for  the sequence  $\{N(f^n, g^n)\}$}\label{sec:coincidence-nielsen}

Let $f, g: X \to Y$ be two maps between two compact connected manifolds.
Maps $f,g$ induce homomorphisms $f_\#, g_\#$ between fundamental groups of $X$ and $Y$.
In section \ref{sec:intro} we have defined the \emph{topological} Reidemeister coincidence number of maps $R(f,g) := R(f_\#,g_\#)$
to be the Reidemeister coincidence number of the~induced homomorphisms.
If $Y=X$ then we say that the pair of maps $f,g$ is \emph{tame} if
$R(f^n, g^n) < \infty$ for every $n \in \N$.
We can simply reword the definition of growth rate of the sequence \{$R(f^n, g^n)\}$
of the Reidemeister coincidence numbers
for the pair of maps $(f,g)$: $R^\infty(f,g) := \lim_{n \to \infty} R(f^n, g^n)^{1/n} = R^\infty(f_\#,g_\#)$.

In the same manner as with the Reidemeister numbers we define the growth rate of the sequence
\{$N(f^n, g^n)\}$  of the Nielsen coincidence numbers
(if it exists)
\[
  N^\infty(f,g) := \lim_{n \to \infty}  N(f^n, g^n)^{1/n}.
\]

In this section we are going to work with compact nilmanifolds.
Let $G$ be a Lie group and let $\G \subset G$ be its discrete subgroup.
$\G$ is a \emph{lattice} if there exists a finite $G$-invariant measure on $G/\G$.
A compact manifold $X$ of dimension $d$ is a \emph{nilmanifold} if it is a quotient space $G/\G$
of a simply-connected nilpotent Lie group $G$ by a lattice $\G$ of rank $d$ (see \cite[Definition 6.2.5]{JezMar}).

We are going to make use of the following result which shows connections between different coincidence numbers
under certain assumptions.
For a definition of the Lefschetz coincidence number $L(f,g)$ see for example \cite{Vick}.

\begin{theorem}[\protect{\cite[Theorem 2.3]{Gonc}}]\label{thm:nielsen-reidemeister}
  Let $f,g:N \to N$ be a pair of maps where $N$ is a compact nilmanifold.
  Then the following conditions are equivalent:
  \begin{enumerate}
    \item[(i)] $N(f,g) \neq 0$;
    \item[(ii)] $\Coin(f_\#, g_\#) = 1$;
    \item[(iii)] $R(f,g) < \infty$.
  \end{enumerate}
  If one of the conditions holds, then
  \[
    R(f,g) = N(f,g) = |L(f,g)|.
  \]
\end{theorem}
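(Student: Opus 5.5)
The statement is Theorem~\ref{thm:nielsen-reidemeister}. It is cited from \cite[Theorem 2.3]{Gonc}, so here I only sketch how it would be proved. I would identify the nilmanifold $N=G/\G$ with a $K(\G,1)$. The induced homomorphisms $f_\#,g_\#\colon \G\to\G$ then extend uniquely to Lie group endomorphisms $F,G'$ of the simply connected nilpotent Lie group $G$; this is Mal'cev rigidity, and it uses the fact that $\G$ is a lattice. Since $N$ is aspherical, $f$ and $g$ are homotopic to the maps $\bar F,\bar G'$ that $F,G'$ induce on $G/\G$. All three quantities $N(f,g)$, $R(f,g)$ and $L(f,g)$ are homotopy invariants, so I may assume $f,g$ are these induced maps. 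Every coincidence class is then the image of $\Coin(\gamma F, G')$ in $G$ for some $\gamma\in\G$, with one lifting class for each $(f_\#,g_\#)$-Reidemeister class.

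Next I would pass to the isolated lower central series \eqref{equ:isolated-lcs}, which has torsion-free factors $\G_k\cong\Z^{d_k}$, and induct on the nilpotency class. Each stage is a fibration of nilmanifolds $T^{d_k}\to N_k\to N_{k-1}$ with a torus fibre. On the fibre torus the pair induces linear maps given by integer matrices $A_k,B_k$, and for tori one has the classical facts:
\[
L(A_k,B_k)=\det(B_k-A_k),\qquad R(A_k,B_k)=|\det(B_k-A_k)|\ \text{ if nonzero, and } \infty \text{ otherwise.}
\]
The Roman'kov-type product formula \eqref{eq:reidemeister-coincidence-romankov} gives $R(f,g)=\prod_k R(\varphi_k,\psi_k)$. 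A Lefschetz fibration product formula, for example via the Anosov-type argument for nilmanifolds, gives $L(f,g)=\prod_k\det(B_k-A_k)$. Together these prove $R(f,g)=|L(f,g)|$ whenever $R(f,g)<\infty$.

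For the equivalences I would argue as follows.
\begin{itemize}
\item (iii)$\Rightarrow$(ii). $R<\infty$ forces every $\det(B_k-A_k)\neq0$. Then each factor of $\Coin(f_\#,g_\#)=\{x\in\G : f_\#(x)=g_\#(x)\}$ is trivial, since a nonzero kernel element of $B_k-A_k$ would survive. Induction up the series gives $\Coin(f_\#,g_\#)=1$.
\item (ii)$\Rightarrow$(iii). Triviality of $\Coin(f_\#,g_\#)$ implies the Lie-algebra map $dG'-dF$ is injective. Rationality then gives $\det(B_k-A_k)\neq0$ on every factor, hence $R<\infty$.
\item (iii)$\Rightarrow$(i) and the equality $N=R$. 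Each coincidence class $\Coin(\gamma F,G')$ is then a single nondegenerate point, because $G'-\gamma F$ has invertible differential. Its index is $\pm\operatorname{sign}\det(B-A)$, and all classes share the same sign. Hence every class is essential, so $N(f,g)=R(f,g)=|L(f,g)|\neq0$.
\item (i)$\Rightarrow$(iii). If $R=\infty$, some factor has $\det(B_k-A_k)=0$. One can then homotope within that fibre direction to remove all coincidences, or note that $L(f,g)=0$ and that all classes have equal index on nilmanifolds. Either way $N(f,g)=0$.
\end{itemize}

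I expect the main obstacle to be the fibrewise bookkeeping. Specifically, one must show that all coincidence classes carry the same index, and that the Lefschetz coincidence number multiplies along the fibration even though the maps on the base and fibre are twisted by $\gamma$. I would handle this first on the Lie algebra, where $\det(dG'-d(\gamma F))=\det(dG'-dF)$ because $\gamma F$ and $F$ differ only by an inner automorphism acting unipotently.
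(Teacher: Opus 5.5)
The paper does not prove this statement. It is quoted from \cite{Gonc} and used as a black box in Section~\ref{sec:coincidence-nielsen}, so your sketch has to stand on its own. It does not yet: the hard direction is missing.

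\textbf{The gap is (i)$\Rightarrow$(iii).} This direction, $R(f,g)=\infty\Rightarrow N(f,g)=0$, is the real topological content of the theorem, and you give two assertions rather than an argument.
\begin{itemize}
\item ``All classes have equal index on nilmanifolds'' is a result of the same depth as what you are proving. In the degenerate case the nonempty sets $\Coin(\gamma F,G')$ are positive-dimensional, so your index computation does not apply to them. Even with all indices zero, the homotopy-theoretic definition of essentiality used in this paper would still need a removal argument.
\item ``Homotope within that fibre direction'' is the right idea, but two ingredients are missing. For $k<c$ the factor $\sqrt[\Gamma]{\gamma_k(\Gamma)}/\sqrt[\Gamma]{\gamma_{k+1}(\Gamma)}$ is not central, so the fibre equation is not affine on $N$ itself. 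And a small push can only empty every class if the coincidences over the base are isolated.
\end{itemize}

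\textbf{A way to close it.} Take $k$ \emph{minimal} with $\det(B_k-A_k)=0$. Pass to the nilmanifold $N_k$ with fundamental group $\Gamma/\sqrt[\Gamma]{\gamma_{k+1}(\Gamma)}$.
\begin{itemize}
\item On $N_k$ the $k$-th factor is a central torus $\mathfrak z/\Lambda$ with $\mathfrak z\cong\R^{d_k}$. By minimality the base pair on $N_{k-1}$ has finite Reidemeister number. So, by your nondegenerate argument, it has finitely many coincidence classes, each a single point.
\item Compose the map induced by $g$ with translation by $\exp(tw)$, $w\in\mathfrak z$. Over a base class with representative $(x_0,\gamma_0)$, the coincidences correspond to solutions $z\in\mathfrak z$, $\lambda\in\Lambda$ of $(B_k-A_k)z=c+\lambda-tw$ for a fixed $c\in\mathfrak z$.
\item Choose $w\notin V:=\mathrm{Im}_\R(B_k-A_k)$. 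Since $V$ is a proper rational subspace, $V+\Lambda$ is closed with identity component $V$. Hence the set of $t$ with $c-tw\in V+\Lambda$ is discrete.
\item A small $t\neq 0$ avoiding these finitely many discrete sets gives a coincidence-free pair on $N_k$.
\item Lift this homotopy through the bundle $N\to N_k$, or translate on $N$ by a lift of $\exp(tw)$. This gives a coincidence-free pair homotopic to $(f,g)$, so $N(f,g)=0$ under any definition of essential.
\end{itemize}
This is exactly the fibrewise bookkeeping you flagged as the obstacle, and it must be written out.

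\textbf{Two smaller repairs elsewhere.}
\begin{itemize}
\item In (iii)$\Rightarrow$(i), an invertible differential only makes coincidences isolated and nondegenerate. That each $\Coin(\gamma F,G')$ is \emph{exactly one} point, in particular nonempty, is what makes all $R(f,g)$ classes essential. This comes from solving the coincidence equation level by level along the real lower central series, where at level $j$ the equation is affine with linear part $B_j-A_j$. This index count also gives $|L(f,g)|=R(f,g)$ directly, so the Lefschetz fibration formula is unnecessary.
\item In (ii)$\Rightarrow$(iii), rationality enters at the first step, not the second. $\Coin(F,G')=\exp\Ker(dF-dG')$, and $\Ker(dF-dG')$ is defined over $\Q$ with respect to $\Q\log\Gamma$. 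So if it is nonzero it contains a rational $X\neq0$, and $\exp(mX)\in\Gamma$ is a nontrivial element of $\Coin(f_\#,g_\#)$ for suitable $m$. Block-triangularity then turns $\det(dG'-dF)\neq0$ into $\prod_k\det(B_k-A_k)\neq0$.
\end{itemize}
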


The theorem above along with other results of this paper motivate us to define and study
the \emph{Nielsen coincidence zeta function} of a pair $f,g: N \to N$ of maps
of a compact polyhedron to itself
\begin{equation}\label{eq:nielsen-zeta-def}
  N_{f,g}(z) := \exp \left( \sum_{n=1}^{\infty} \frac{N(f^n, g^n)}{n} z^n \right).
\end{equation}
Because of finitness of Nielsen coincidence numbers,
the Nielsen coincidence zeta function is always well-defined.
If the pair $f,g$ is not tame then the Reidemeister coincidence zeta function is not defined
therefore the Nielsen coincidence zeta function is a different object in general.

We state our main result regarding Nielsen coincidence zeta function.
\begin{theorem}\label{thm:main-result-nielsen-coincidence}
  Let $f,g:N \to N$ be a tame pair of maps of a compact nilmanifold to itself.
  Denote by $f_\#, g_\#:G \to G$ endomorphisms induced on the fundamental group of $N$.
  $G$ is a finitely generated torsion-free nilpotent group.
  Let $c$ denote the nilpotency class of $G$ and for $1 \leq k \leq c$ let
  ${f_\#}_k, {g_\#}_k: G_k \to G_k$ denote the tame pairs of induced endomorphisms
  of the finitely generated torsion-free abelian factor groups
  \[
    G_k = \sqrt[G]{\gamma_k(G)} / \sqrt[G]{\gamma_{k+1}(G)} \cong \Z^{d_k}
  \]
  for some $d_k \in \N$, that arise from the isolated lower central series of $G$.
  Every $G_k$ is a fundamental group of the $d_k$-dimensional torus $\T^{d_k}$ which is also a compact nilmanifold.
  Therefore every endomorphism ${f_\#}_k, {g_\#}_k$ can be realized by a selfmap
  $f_k, g_k: \T^{d_k} \to \T^{d_k}$.

  Then the following hold.
  \begin{enumerate}
    \item[(i)] For each $n \in \N$
      \begin{equation}\label{eq:nielsen-romankov}
        N(f^n, g^n) = \prod_{k=1}^c N(f_k^n, g_k^n)
      \end{equation}
    \item[(ii)] For $1 \leq k \leq c$ let
      \[
        {f_\#}_{k,\Q}, {g_\#}_{k,\Q}: G_{k,\Q} \to G_{k,\Q}
      \]
      denote the extensions of ${f_\#}_k, {g_\#}_k$ to the divisible hull
      $G_{k,\Q} = \Q \otimes G_k \cong \Q^{d_k}$.
      Suppose each pair of endomorphisms ${f_\#}_{k,\Q}, {g_\#}_{k,\Q}$ is simultaneously triangularizable.
      Let $\xi_{k,1}, \ldots, \xi_{k,d_k}$ and
      $\eta_{k,1}, \ldots, \eta_{k,d_k}$ be the eigenvalues of
      ${f_\#}_{k,\Q}$ and ${g_\#}_{k,\Q}$ in the field $\C$, including
      multiplicities, ordered so that, for $n \in \mathbb{N}$, the
      eigenvalues of ${f_\#}_{k,\Q}^{\, n} - {g_\#}_{k,\Q}^{\, n}$ are
      $\xi_{k,1}^{\, n} - \eta_{k,1}^{\, n}, \ldots, \xi_{k,d_k}^{\, n} - \eta_{k,d_k}^{\,n}$.
      Then for each $n \in \mathbb{N}$
       \begin{equation} \label{eq:nielsen-product-of-eigenvalues}
         N(f_k^n,g_k^n)
         = \prod_{i=1}^{d_k }\lvert \xi_{k,i}^{\, n} - \eta_{k,i}^{\, n} \rvert
       \end{equation}
     \item[(iii)] Suppose that $|\xi_{k,i}| \neq |\eta_{k,i}|$
       for $1 \leq k \leq c, \ 1 \leq i \leq d_k$.
       Then the Nielsen coincidence zeta function $N_{f,g}(z)$ is rational.
     \item[(iv)] With the same assumption $|\xi_{k,i}| \neq |\eta_{k,i}|$ for $1 \leq k \leq c, \ 1 \leq i \leq d_k$
       the growth rate of Nielsen coincidence numbers exists and can be computed using the explicit formula
       \begin{equation}\label{eq:nielsen-growth-formula}
         N^\infty(f,g) =
         \prod_{k=1}^c \prod_{i=1}^{d_k} \max\{|\xi_{k,i}|, |\eta_{k,i}|\}.
       \end{equation}
     \item[(v)] With the same assumption $|\xi_{k,i}| \neq |\eta_{k,i}|$ for $1 \leq k \leq c, \ 1 \leq i \leq d_k$
       we have the Gauss congruences
       \begin{equation}\label{eq:nielsen-gauss-congruences}
         \sum_{k|n} \mu(n/k)\cdot N(f^k, g^k) \equiv 0 \mod n
       \end{equation}
       for all $n \in \N$.
  \end{enumerate}
\end{theorem}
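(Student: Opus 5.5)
The whole argument rests on Theorem \ref{thm:nielsen-reidemeister}: it turns Nielsen coincidence numbers of a tame pair on a nilmanifold into Reidemeister coincidence numbers, so the results of Sections \ref{sec:coincidence-reidemeister} and \ref{sec:congruences} can be used directly.

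\emph{Part (i).} Since the pair $(f,g)$ is tame, $R(f^n,g^n)<\infty$ for every $n$. Theorem \ref{thm:nielsen-reidemeister} then gives
\[
N(f^n,g^n)=R(f^n,g^n)=R(f_\#^n,g_\#^n).
\]
The Roman'kov-type formula \eqref{eq:reidemeister-coincidence-romankov} of Theorem \ref{thm:growth-coinc}(a) factors this as $\prod_k R({f_\#}_k^n,{g_\#}_k^n)$. Each factor is finite, so each pair $(f_k,g_k)$ on the torus $\T^{d_k}$ is tame. Applying Theorem \ref{thm:nielsen-reidemeister} again on each torus gives $R({f_\#}_k^n,{g_\#}_k^n)=N(f_k^n,g_k^n)$, which proves (i).

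\emph{Part (ii).} The group $G_k\cong\Z^{d_k}$ is finitely generated. By Remark \ref{thm:growth-rate-coincidence-reidemeister-finitely-generated}, every $I_k(p)$ is the full index set $\{1,\dots,d_k\}$. Hence the $p$-adic factors in \eqref{equ:key-formula-2} disappear and
\[
R({f_\#}_k^n,{g_\#}_k^n)=\prod_i|\xi_{k,i}^n-\eta_{k,i}^n|,
\]
which is \eqref{eq:nielsen-product-of-eigenvalues}. One can also see this directly: for the torus, $R=|\det({g_\#}_k^n-{f_\#}_k^n)|$, and simultaneous triangularisability gives the eigenvalues of the difference.

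\emph{Part (iii), the main step.} I would expand the product in (ii) as an integer-coefficient exponential sum, as in \eqref{eq:form_of_w}.
\begin{itemize}
\item For a real pair $(\xi,\eta)$ with $|\xi|\ne|\eta|$, write $|\xi^n-\eta^n|=\delta_1^n-\delta_2^n$ with $\delta_1=\max\{|\xi|,|\eta|\}$ and $\delta_2=\xi\eta/\delta_1$. The sign is constant in $n$ precisely because $|\xi|\neq|\eta|$.
\item For complex pairs, use the pairing via the flag construction of the proof of Theorem \ref{thm:growth-coinc}. This gives $|\xi^n-\eta^n|^2=(\xi^n-\eta^n)(\bar\xi^n-\bar\eta^n)$, which needs no absolute value and so no sign issue.
\end{itemize}
Multiplying over $k$ and $i$ gives $N(f^n,g^n)=\sum_j c_jw_j^n$ with $c_j=\pm1$. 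Collecting equal $w_j$ gives integer multiplicities. The argument of Lemma \ref{thm:rational-zeta-coefficients-sum}($\Leftarrow$) then shows that $N_{f,g}(z)=\prod_j(1-w_jz)^{-c_j}$ is rational.

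The delicate points are these:
\begin{itemize}
\item Checking that the sign of each real factor is stable for all $n$, not just eventually. Since $\delta_1>|\delta_2|$ this holds for every $n\ge1$.
\item Ensuring the pairing of complex eigenvalues is consistent. This is the step I expect to require the most care, and I would handle it exactly as in the proof of Theorem \ref{thm:growth-coinc}.
\end{itemize}

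\emph{Part (iv).} In the expansion above there is a dominant term $w_m=\prod\delta_{1,k,i}$ with $|w_m|>|w_j|$ for all $j\ne m$, because each $|\delta_{2,k,i}|<\delta_{1,k,i}$. Therefore $N(f^n,g^n)^{1/n}\to|w_m|$, which gives \eqref{eq:nielsen-growth-formula}. This is also Remark \ref{thm:growth-rate-coincidence-reidemeister-finitely-generated} combined with $N=R$.

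\emph{Part (v).} $N(f^n,g^n)=R(f_\#^n,g_\#^n)$ and the zeta function is rational by (iii). Theorem \ref{thm:rational-congruences} therefore applies to the pair $(f_\#,g_\#)$ and yields \eqref{eq:nielsen-gauss-congruences}. Alternatively, combine Lemmas \ref{thm:rational-zeta-coefficients-sum}, \ref{thm:coefficients-sum-map-bouquet} and \ref{thm:integer-matrix-map-bouquet} with the Dold congruences \eqref{Dold}.
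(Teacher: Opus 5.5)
Your proposal is correct and follows the paper's proof essentially step by step. You reduce to Reidemeister coincidence numbers via Theorem \ref{thm:nielsen-reidemeister}, then use the Roman'kov-type product formula for (i), the determinant/eigenvalue computation on the torus factors for (ii), the exponential-sum expansion with a unique dominant term for (iii)--(iv), and Theorem \ref{thm:rational-congruences} for (v). Your small additions are correct details the paper leaves implicit: that each factor pair is tame because the product of the factors is finite, and that the sign in $|\xi^n-\eta^n|=\delta_1^n-\delta_2^n$ is stable for every $n\ge 1$.
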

\begin{proof}
  Assumption of tameness of the pair $(f,g)$ gives us by definition
  the condition (iii) in Theorem \ref{thm:nielsen-reidemeister} for all iterations $f^n, g^n$.
  Therefore for all $n \in \N$ we have
  \begin{gather}\label{eq:nielsen=reidemeister}
  \begin{gathered}
    N(f^n, g^n) = R(f^n, g^n) = R(f_\#^n, g_\#^n), \\
    N(f_k^n, g_k^n) = R(f_k^n, g_k^n) = R({f_\#}_k^n, {g_\#}_k^n).
  \end{gathered}
  \end{gather}
  From now on we can follow arguments from the proof of Theorem \ref{thm:growth-coinc}.

  To prove (i) it is sufficient to make use of the Roman'kov formula (\ref{eq:reidemeister-coincidence-romankov})
  for endomorphisms $f_\#, g_\#$. We get
  \[
    N(f^n, g^n) = R(f_\#^n, g_\#^n)
    = \prod_{k=1}^c R({f_\#}_k^n, {g_\#}_k^n) = \prod_{k=1}^c N(f_k^n, g_k^n).
  \]

  To prove (ii) observe that the endomorphisms ${f_\#}_k, {g_\#}_k: G_k \to G_k \cong \Z^{d_k}$
  are represented by integer matrices $A_k, B_k \in M_{d_k}(\Z)$ associated to them respectively.
  There are unimodular matrices $M_{n,k}, N_{n,k}$ and a diagonal integer matrix
  $C_{n,k} = \diag(c_{n,1},\dots,c_{n,d_k})$
  such that $C_{n,k} = M_{n,k}(A_k^n - B_k^n)N_{n,k}$ and $\det C_{n,k} = \det(A_k^n - B_k^n)$.
  The order of the cokernel of ${f_\#}_k^n - {g_\#}_k^n$ is the order of the group
  $\Z/c_{n,1}\Z \oplus \dots \oplus \Z/c_{n,d_k}\Z$ therefore we can write
  \begin{align}
  \begin{split}
    R({f_\#}_k^n, {g_\#}_k^n) &= |\Coker({f_\#}_k^n - {g_\#}_k^n)| = |c_{n,1} \dots c_{n,d_k}| \\
    &= |\det({f_\#}_k^n - {g_\#}_k^n)| = |\det({f_\#}_{k,\Q}^n - {g_\#}_{k,\Q}^n)| \\
    &= \prod_{i=1}^{d_k} \lvert \xi_{k,i}^{\, n} - \eta_{k,i}^{\, n} \rvert \\
  \end{split}
  \end{align}
  Now using (\ref{eq:nielsen=reidemeister}) we get (\ref{eq:nielsen-product-of-eigenvalues}).

  To prove (iii) we need to open up the modules in (\ref{eq:nielsen-product-of-eigenvalues}).
  Complex eigenvalues $\xi_{k,i}, \eta_{k,i}$ in the spectrum of ${f_\#}_{k,\mathbb{Q}}, {g_\#}_{k,\mathbb{Q}}$
  appear in pairs with their complex conjugate $\ov{\xi_{k,i}}, \ov{\eta_{k,i}}$ respectively.
  If at least one of $\xi_{k,i}, \eta_{k,i}$ is complex (has nonzero imaginary part) we have
  \[
    |\xi_{k,i}^n - \eta_{k,i}^n| \cdot |\ov{\xi_{k,i}}^n - \ov{\eta_{k,i}}^n|
    = |\xi_{k,i}^n - \eta_{k,i}^n|^2
    = (\xi_{k,i}^n - \eta_{k,i}^n) \cdot (\ov{\xi_{k,i}}^n - \ov{\eta_{k,i}}^n).
  \]
  If $\xi_{k,i}, \eta_{k,i}$ are both real we have
  $|\xi_{k,i}^n - \eta_{k,i}^n| = \delta_{1,k,i}^{\, n} - \delta_{2,k,i}^{\, n}$, where
  $\delta_{1,k,i} = \max\{| \xi_{k,i} |, |\eta_{k,i}|\} $ and
  $\delta_{2,k,i}=\frac{\xi_{k,i} \, \cdot \, \eta_{k,i}}{\delta_{1,k,i}}$.
  Reorder the eigenvalues so that for $i=1,\dots,s$ at least one of $\xi_{k,i}, \eta_{k,i}$
  has nonzero imaginary part, the complex conjugates have subsequent indices
  and for $i=s+1,\dots,d_k$ both eigenvalues are real. Then
  \begin{equation}
      N(f_k^n, g_k^n)
      = \prod_{i=1}^{d_k} | \xi_{k,i}^n - \eta_{k,i}^n|
      = \sum \pm (\mu_1 \nu_1 \cdots \mu_{s/2} \nu_{s/2}
      \delta_{\epsilon(1),k,s+1} \cdots \delta_{\epsilon(T),k,d_k})^n,
  \end{equation}
  where $\mu_i$ is $\xi_{k,i}$ or $\eta_{k,i}$, $\nu_i$ is $\ov{\xi_{k,i}}$ or $\ov{\eta_{k,i}}$,
  $\epsilon(i)$ is $1$ or $2$.
  Substituting this for every $k=1,\dots,c$ to (\ref{eq:nielsen-romankov}) we get
  \begin{equation}\label{eq:nielsen-sum}
      N(f^n, g^n) = \sum_{j \in J} c_jw_j^{\, n},
  \end{equation}
  where $J$ is a finite index set, $c_j =\pm 1$ and $ w_j \in \mathbb{C} \setminus \{0\}$, $j\in J$.
  Consequently, the Nielsen coincidence zeta function
  \begin{align*}
    N_{f,g}(z) &= \exp \left( \sum_{i=1}^\infty \frac{N(f^n, g^n)}{n} z^n \right)
    = \exp \left( \sum_{j \in J} c_j \sum_{n=1}^\infty \frac{(w_jz)^n}{n} \right) \\
    &= \prod_{j \in J} (1 - w_jz)^{-c_j}
  \end{align*}
  is rational.

  To prove (iv) note that in (\ref{eq:nielsen-sum}) there is a dominant term $w_m$ for which
  $|w_m| = \prod_{k=1}^c \prod_{i=1}^{d_k} \max \{|\xi_{k,i}|, |\eta_{k,i}|\} > |w_j|$ for $j \neq m$.
  We get $\lim_{n \to \infty} \sqrt[n]{N(f^n, g^n)} = |w_m|$ which is \eqref{eq:nielsen-growth-formula}.

  To prove (v) observe that the assumption of tameness gives us equality of zeta functions
  $N_{f,g}(z) \equiv R_{f_\#,g_\#}(z)$.
  By (iii) these zeta functions are rational.
  $R_{f_\#,g_\#}(z)$ fulfills assumptions of Theorem \ref{thm:rational-congruences}
  and the congruences (\ref{eq:nielsen-gauss-congruences}) follow.
\end{proof}

\bigskip
\section{Asymptotic behavior of the sequences $\{R(\varphi^k, \psi^k)\}$
    and $\{N(f^k, g^k)\}$}\label{sec:asymptotic}

Assume that Reidemeister coincidence zeta function is rational.
Then the equation (\ref{eq:coincidence-sum}) $R(\varphi^k, \psi^k) = \sum_{i=1}^{r} \chi_i \lambda_i^k$
holds and from the proof of Lemma \ref{thm:rational-zeta-coefficients-sum} we know that $\lambda_i^{-1}$
are zeros and poles of the zeta function.
Define
\begin{equation}\label{eq:lambda-def}
    \lambda(\varphi, \psi) := \max_i \left\{ |\lambda_i| \right\}, \quad
    n(\varphi, \psi) := \#\{ \lambda_i \mid |\lambda_i| = \lambda(\varphi, \psi) \}.
\end{equation}
In theorem \ref{thm:main-result-nilpotent} we proved a formula for growth of Reidemeister coincidence numbers.
Providing $R_{\varphi, \psi}(z)$ is rational we can say more about the structure of the set of limit points
of the sequence $\left\{ R(\varphi^k, \psi^k) \ / \ \lambda(\varphi, \psi)^k \right\}_{k=1}^\infty$.
Babenko and Bogatyi stated the following theorem.

\begin{theorem}[\protect{\cite[Theorem 2.6]{BaBo}, see also \cite[Theorem 3.1.53]{JezMar}}]\label{thm:BaBo-trichotomy}
  If $f$ is a map of a compact ANR $X$ to itself and $r(f)$ is the reduced (or essential) spectral radius
  of the map $f_*$ induced on the homology group, then one of the following possibilities holds:
  \begin{enumerate}
      \item[(i)] $L(f^k) = 0, \ k=1,2,\dots$ .
      \item[(ii)] The set of limit points of the sequence $\left\{ L(f^k) \ / \ r(f)^k \right\}_{k=1}^\infty$
        contains an interval.
      \item[(iii)] The sequence $\left\{ L(f^k) \ / \ r(f)^k \right\}_{k=1}^\infty$ has the same limit points
        as the periodic sequence $\left\{ \sum_{\epsilon_i^m=1} A_i \epsilon_i^k \right\}_{k=1}^\infty$,
        where the $A_i$ are integers which do not vanish simultaneously.
  \end{enumerate}
\end{theorem}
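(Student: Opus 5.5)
The plan is to reduce everything to an exponential sum with algebraic-integer bases, and then read off the limit points from the closure of a monothetic subgroup of a torus. Let $A_e := \bigoplus_{i \text{ even}} H_i(f)$ and $A_o := \bigoplus_{i \text{ odd}} H_i(f)$ be the maps induced on rational homology. By the Lefschetz trace formula, exactly as in the proof of Lemma \ref{thm:coefficients-sum-map-bouquet},
\[
L(f^k) = \tr A_e^k - \tr A_o^k = \sum_{i=1}^r \chi_i \lambda_i^k, \qquad k \in \N .
\]
Here the $\lambda_i$ are the distinct nonzero eigenvalues. After cancelling eigenvalues common to $A_e$ and $A_o$, we keep only those with $\chi_i \in \Z\setminus\{0\}$. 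This cancellation is how I interpret the reduced (essential) spectral radius: $r(f) = \max_i |\lambda_i|$. If no $\lambda_i$ survives, then $L(f^k)=0$ for all $k$, which is case (i). By the Vandermonde argument used in Lemma \ref{thm:coefficients-sum-map-bouquet}, the converse also holds: if some $\chi_i \neq 0$, the sequence is not identically zero. So from now on $r := r(f) > 0$.

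Split the sum according to whether $|\lambda_i| = r$ or $|\lambda_i| < r$:
\[
\frac{L(f^k)}{r^k} = \sum_{|\lambda_i| = r} \chi_i \zeta_i^k + o(1), \qquad \zeta_i := \lambda_i / r \in S^1 .
\]
The limit points of $L(f^k)/r^k$ are therefore those of $F(g^k)$, where:
\begin{itemize}
\item $g = (\zeta_1,\dots,\zeta_m) \in \T^m$;
\item $F(x) = \sum_{i} \chi_i x_i$ is continuous on $\T^m$;
\item $F(g^k)$ is real for every $k$, since $L(f^k)$ is.
\end{itemize}
Let $H$ be the closure of $\{g^k\}_{k \ge 1}$ in $\T^m$. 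This is a compact monothetic group, and every $h \in H$ is a limit of $g^{k_j}$ with $k_j \to \infty$, because the tail $\{g^k\}_{k\ge K}$ is also dense in $H$. Hence the set of limit points is exactly $F(H)$, and $F$ is real-valued on $H$ by continuity.

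Now split into two cases according to whether every $\zeta_i$ is a root of unity.
\begin{itemize}
\item If every $\zeta_i$ is a root of unity, then $H$ is finite and $F(g^k)$ is periodic. Each $\zeta_i = \epsilon_i$ satisfies $\epsilon_i^{M}=1$ for some common $M$, and the coefficients $A_i = \chi_i$ are nonzero integers. This is case (iii).
\item Otherwise, let $H_0$ be the identity component of $H$. Restricting the coordinate characters $x \mapsto x_i$ to $H$ gives distinct characters $\chi^{(i)}$ of $H$, distinct because they already differ at the generator $g$. So $F|_H = \sum_i \chi_i \chi^{(i)}$ is a Fourier expansion on $H$ with nonzero coefficients at distinct characters.
\end{itemize}

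The key step, and the expected main obstacle, is to show in the second case that $F$ is not constant on some coset $hH_0$. Suppose $F$ were constant on every coset of $H_0$. Then $F$ would factor through the finite group $H/H_0$. By uniqueness of Fourier coefficients on the compact abelian group $H$, every character in the support of $F$ would then be trivial on $H_0$, i.e.\ would have finite order. In particular $\zeta_i = \chi^{(i)}(g)$ would be a root of unity for every $i$, contradicting the case assumption.

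So there is a coset $hH_0$ on which $F$ is nonconstant. This coset is connected, and $F$ is real-valued and continuous on it. Hence $F(hH_0)$ is a nondegenerate interval contained in the set of limit points, which is case (ii).

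I also need to check that the three cases are exhaustive, which the argument shows, and that in case (iii) the $A_i$ do not all vanish. The latter follows again from the Vandermonde/character independence, since the $\chi_i$ with $|\lambda_i| = r$ are nonzero by the choice of $r$.
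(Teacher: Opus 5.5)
Your proof is correct. The paper does not prove this statement; it quotes it from Babenko--Bogatyi and Jezierski--Marzantowicz. The closest argument in the paper is its proof of the analogue, Theorem~\ref{thm:coincidence-reidemeister-asymptotic-sequence}, and your route differs from it essentially.

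\textbf{The paper's route.} The top angles $\theta_i$ are split into rational and irrational ones, and the irrational ones are assumed to be $\Q$-linearly independent together with $1$. Kronecker's theorem then gives density of $(k\theta_{i_1},\dots,k\theta_{i_s})$. Finally, an interval $[0,m_{\mathcal S}]$ is read off from $\bigl|\sum\chi_i\exp(2\pi\iota\xi_i)\bigr|$ on a coordinate subtorus, with the remaining coordinates set to $0$.

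\textbf{Your route.} You identify the limit set exactly as $F(H)$, where $H$ is the closure of $\{g^k\}_{k\ge 1}$. You then use uniqueness of Fourier coefficients on $H$ to show that $F$ is nonconstant on some coset $hH_0$ unless every $\zeta_i$ is a root of unity.

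\textbf{What this buys.} No independence hypothesis is needed. In fact none could hold for all irrational angles at once: a non-real top eigenvalue $\lambda_i$ forces $\overline{\lambda_i}$ to appear with the same coefficient, so $\theta_i$ and $-\theta_i$ occur together. Rational and irrational angles are handled simultaneously by working on cosets of $H_0$, instead of zeroing coordinates. The interval you obtain consists of actual signed limit values, rather than an interval $[0,m_{\mathcal S}]$ for an absolute value whose left endpoint $0$ need not be attained. The Kronecker approach is more explicit when independence genuinely holds, but yours covers the general case.

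\textbf{One point to tighten.} The fact that $F(g^k)\in\R$ does not follow from $L(f^k)\in\R$ alone. Either note that eigenvalues of the rational matrices $A_e,A_o$ come in conjugate pairs with equal net multiplicities, so splitting by modulus respects conjugation. Or simply observe that each value $F(h)$, $h\in H$, is a limit of the real numbers $L(f^{k_j})/r(f)^{k_j}$.
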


Fel'shtyn and Lee proved an analogous result for Nielsen numbers of a map of an infra-solvmanifold of type $(R)$ \cite[Theorem 4.1]{FelsLee}.
We are going to prove similar result for Reidemeister coincidence numbers of endomorphisms
of a torsion-free nilpotent group of finite Pr\"ufer rank.
Denote the radius of convergence of $R_{\varphi, \psi}(z)$ by $\mathcal{R}$.

\begin{lemma}\label{thm:lambda-inverse-reidemeister}
  $\lambda(\varphi, \psi) = 1 / \mathcal{R}$.
\end{lemma}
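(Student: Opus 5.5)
The plan is to work with the logarithmic derivative $S(z)=\sum_{k\ge1}R(\varphi^k,\psi^k)z^{k-1}$ from the proof of Lemma \ref{thm:rational-zeta-coefficients-sum}. Since $R_{\varphi,\psi}(z)=\exp\bigl(\int_0^z S\bigr)$ with $S(0)$ harmless, the series defining $\log R_{\varphi,\psi}$ and the series $S$ have the same radius of convergence. This common radius is $\mathcal{R}$: the zeta function is rational, and its zeros and poles are exactly the points $\lambda_i^{-1}$. A point with $\chi_i\neq0$ is a genuine zero or pole, because the $\lambda_i$ are distinct and we may discard the terms with $\chi_i=0$. So the exponential power series converges up to the nearest such singularity, which for a rational function means $\mathcal{R}=\min_i|\lambda_i|^{-1}=1/\lambda(\varphi,\psi)$. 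In that sense the lemma is essentially immediate, but I will argue through the coefficients to be safe about radii of $\exp$ versus $\log$.

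The concrete steps are as follows. First, by (\ref{eq:S=rational}), $S(z)=\sum_i \chi_i\lambda_i/(1-\lambda_i z)$ up to sign, with distinct $\lambda_i$ and nonzero $\chi_i$. This is a rational function whose poles are exactly the $\lambda_i^{-1}$, all simple with nonzero residues, since distinct $\lambda_i$ give distinct poles that cannot cancel. Second, the radius of convergence of the Taylor series of a rational function at $0$ equals the distance to its nearest pole. Hence the radius of $S$, equivalently of $\sum R(\varphi^k,\psi^k)z^k/k$, is $1/\max_i|\lambda_i|=1/\lambda(\varphi,\psi)$. Alternatively, Cauchy--Hadamard gives this via $\limsup_k |R(\varphi^k,\psi^k)|^{1/k}=\lambda(\varphi,\psi)$, which follows from the nonvanishing of a Vandermonde-type determinant for the top-modulus terms.

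Third, I pass to $R_{\varphi,\psi}(z)$ itself. Inside the disk $|z|<1/\lambda$ the exponential of a convergent series converges, so $\mathcal{R}\ge 1/\lambda$. Conversely, $R_{\varphi,\psi}(z)=\prod_i(1-\lambda_i z)^{\mp\chi_i}$, and any $\lambda_i$ of maximal modulus contributes a zero or a pole on the circle $|z|=1/\lambda$. If it is a pole, the power series cannot converge beyond it. If it is a zero, note that the coefficients $R(\varphi^k,\psi^k)\ge1$ are positive. Then $\log R_{\varphi,\psi}$ has nonnegative coefficients, so by Pringsheim's theorem $z=1/\lambda$ itself is a singularity of $\log R_{\varphi,\psi}$. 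Since $\log$ of a holomorphic nonvanishing function is holomorphic, $R_{\varphi,\psi}$ must have a zero or pole at the real point $1/\lambda$. A zero there is impossible, because $R_{\varphi,\psi}(x)=\exp(\text{positive})>0$ for $0<x<1/\lambda$ and the product form gives it a limit, which would have to be $\infty$ rather than $0$ by monotonicity. So it is a pole, and $\mathcal{R}\le1/\lambda$.

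The main obstacle I expect is exactly this last point: excluding the situation in which every maximal-modulus $\lambda_i$ yields a zero of the zeta function, which would allow $\exp$ to extend analytically past the circle. The argument I will try first is the positivity and Pringsheim argument sketched in the third step. The fallback is to note that a zero of $R_{\varphi,\psi}$ at $1/\lambda$ forces $\sum_k R(\varphi^k,\psi^k)x^k/k\to-\infty$ as $x\to1/\lambda^-$, which contradicts positivity of the coefficients.
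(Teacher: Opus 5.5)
Your argument is correct, and it takes a more careful route than the paper's proof. The paper applies Cauchy--Hadamard in the form $1/\mathcal{R}=\limsup_k R(\varphi^k,\psi^k)^{1/k}$, factors out $\lambda(\varphi,\psi)$, and asserts that the remaining $k$-th root tends to $1$ because the quantity under it is bounded. That proof has two soft spots, and your proof repairs both.

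First, the $R(\varphi^k,\psi^k)$ are (up to index shift) the Taylor coefficients of $S(z)$, or, up to the factor $1/k$, of $\log R_{\varphi,\psi}(z)$. They are not the coefficients of $R_{\varphi,\psi}(z)$ itself. So the paper tacitly identifies the radius of the zeta function with that of its logarithm. This is not automatic: $\exp\bigl(\chi\log(1-\lambda_i z)\bigr)=(1-\lambda_i z)^{\chi}$ is a polynomial for $\chi>0$, so a priori every top-modulus point $\lambda_i^{-1}$ could be a zero of $R_{\varphi,\psi}$. Your Step 3 closes exactly this gap. Positivity of $R(\varphi^k,\psi^k)$ plus Pringsheim forces $\lambda(\varphi,\psi)$ itself to be one of the $\lambda_i$. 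Monotonicity on $(0,1/\lambda)$ then makes the corresponding factor a pole.

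Second, boundedness only gives $\limsup\le\lambda(\varphi,\psi)$. The reverse inequality needs the top-modulus part $\sum_{|\lambda_i|=\lambda}\chi_iC_i^k$ not to tend to $0$. Your Vandermonde remark, or equivalently your pole-structure description of $S$, supplies this.

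The paper's version is shorter and exhibits $\lambda(\varphi,\psi)$ directly as $\limsup_k R(\varphi^k,\psi^k)^{1/k}$, i.e. as a growth rate. Yours proves the statement as written, for the radius of $R_{\varphi,\psi}$ itself. That is what the later interpretation of $\lambda(f,g)$ as the inverse radius of $N_{f,g}(z)$ relies on.

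One caveat: your opening heuristic, that the series converges up to the nearest zero or pole, is false as stated, since zeros are not singularities. You flag and fix this yourself in Step 3, so the final argument does not depend on it.
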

\begin{proof}
  From the Cauchy-Hadamard formula there is
  \[
    \frac{1}{\mathcal{R}} = \limsup_{k \to \infty} \sqrt[k]{|R(\varphi^k, \psi^k)|}.
  \]
  Taking into account the equality (\ref{eq:coincidence-sum}) there is
  \begin{align*}
    \sqrt[k]{R(\varphi^k, \psi^k)} &= \sqrt[k]{\sum_{|\lambda_i| = \lambda(\varphi, \psi)} \chi_i \lambda_i^k
        + \sum_{|\lambda_i| < \lambda(\varphi, \psi)} \chi_i \lambda_i^k } \\
     &= \lambda(\varphi, \psi) \cdot \sqrt[k]{\sum_{|\lambda_i| = \lambda(\varphi, \psi)} \chi_i \cdot C_i^k
         + \sum_{|\lambda_i| < \lambda(\varphi, \psi)} \chi_i \left( \frac{\lambda_i}{\lambda(\varphi, \psi)} \right)^k },
  \end{align*}
  where $C_i \in \C$ are constants such that $|C_i| = 1$ for all $i$.
  The module of the sum under the root is bounded,
  therefore the module of the whole root goes to $1$ when $k \to \infty$.
  The lemma is proved.
\end{proof}

\begin{remark}
  Because directly from the definition there is $R(\varphi^k, \psi^k) \geq 1$,
  the case of Theorem \ref{thm:BaBo-trichotomy} (a) for Reidemeister coincidence numbers is impossible.
  Moreover based on the lemma above we have $\lambda(\varphi, \psi) \geq 1$.
\end{remark}

\begin{theorem}\label{thm:coincidence-reidemeister-asymptotic-sequence}
  Let $\varphi, \psi: G \to G$ be a tame pair of endomorphisms of a torsion-free nilpotent group $G$
  of finite Pr\"ufer rank and let the Reidemeister coincidence zeta function $R_{\varphi, \psi}(z)$ be rational.
  One of the two possibilities holds:
  \begin{enumerate}
    \item[(i)] The sequence $\{ R(\varphi^k, \psi^k) \ / \ \lambda(\varphi, \psi)^k \}_{k=1}^\infty$ has the same
      limit points set as a periodic sequence $\{ \sum_{j=1}^{n(\varphi, \psi)} \alpha_j \epsilon_j^k \}_{k=1}^\infty$
      where $\alpha_j \in \Z, \ \epsilon_j \in \C$ and $\epsilon_j^q = 1$ for some integer $q>0$.
    \item[(ii)] The set of limit points of the sequence $\{ R(\varphi^k, \psi^k) \ / \ \lambda(\varphi, \psi)^k \}_{k=1}^\infty$
      contains an interval.
  \end{enumerate}
\end{theorem}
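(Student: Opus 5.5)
Since $R_{\varphi,\psi}(z)$ is rational, Lemma \ref{thm:rational-zeta-coefficients-sum} gives $R(\varphi^k,\psi^k)=\sum_{i=1}^r\chi_i\lambda_i^k$ with distinct algebraic integers $\lambda_i$ and $\chi_i\in\Z$. By Lemma \ref{thm:lambda-inverse-reidemeister} and the subsequent remark, $\lambda:=\lambda(\varphi,\psi)\ge 1$. The plan is to split
\[
\frac{R(\varphi^k,\psi^k)}{\lambda^k}=\sum_{|\lambda_i|=\lambda}\chi_i\Big(\frac{\lambda_i}{\lambda}\Big)^k+\sum_{|\lambda_i|<\lambda}\chi_i\Big(\frac{\lambda_i}{\lambda}\Big)^k .
\]
The second sum tends to $0$, so the limit point set of the sequence equals that of $a_k:=\sum_{j=1}^{n(\varphi,\psi)}\alpha_j\epsilon_j^k$. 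Here $\epsilon_j=\lambda_{i_j}/\lambda$ are the $n(\varphi,\psi)$ distinct points of modulus one, and $\alpha_j=\chi_{i_j}$. We may drop any $\epsilon_j$ with $\alpha_j=0$. If all $\alpha_j$ vanish, the sequence tends to $0$, which is case (i) with the zero periodic sequence. This matches the Babenko--Bogatyi argument in Theorem \ref{thm:BaBo-trichotomy}, so alternatively one can realize $R(\varphi^k,\psi^k)$ as $L(f^k)$ via Lemmas \ref{thm:coefficients-sum-map-bouquet} and \ref{thm:integer-matrix-map-bouquet} and import that trichotomy. I prefer the direct route, because it identifies $\lambda$ and $n(\varphi,\psi)$ directly.

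Next, write $\epsilon_j=e^{2\pi\iota\theta_j}$ and split into two cases.

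\emph{Case (i):} all $\theta_j$ are rational. Then there is $q>0$ with $\epsilon_j^q=1$ for all $j$. Hence $a_k$ is periodic and its limit points are its values. Since $\alpha_j\in\Z$, this is exactly alternative (i).

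\emph{Case (ii):} some $\theta_j$ with $\alpha_j\neq0$ is irrational. Apply Kronecker--Weyl to the orbit $k\mapsto(k\theta_1,\dots,k\theta_n)$ mod $\Z^n$. Its closure is a finite union of translates of a subtorus $T$ of positive dimension. Along a suitable residue class $k\equiv k_0 \pmod q$, the orbit is dense in $x_0+T$. Therefore the limit points of $a_k$ contain the image of the continuous map $F(t)=\sum_j\alpha_j e^{2\pi\iota(x_{0,j}+t_j)}$ on the connected set $T$. The values $a_k$ are real, because $R(\varphi^k,\psi^k)$ and $\lambda^k$ are real and the tail tends to $0$. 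So $F$ is real-valued on this connected set, and its image is an interval.

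The main obstacle is showing that $F$ is non-constant on $T$, so that the interval is nondegenerate. I would argue as follows. Group the $\epsilon_j$ by the coset of the torsion subgroup in the multiplicative group they generate. On the residue class $k\equiv k_0 \pmod q$, $F$ becomes a trigonometric polynomial in the torus coordinates whose characters are distinct for distinct cosets. The distinctness of the $\lambda_i$ guarantees that the coefficients combine without total cancellation; I would choose $k_0$ so that the coefficient of a non-trivial character is nonzero. A nonzero non-constant trigonometric polynomial on a torus is non-constant, and its range is a nondegenerate interval.

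If this cancellation analysis becomes delicate, I would fall back on Theorem \ref{thm:BaBo-trichotomy} via the bouquet realization. There, case (i) of that theorem is excluded because $R\ge1$ (see the remark above). It then remains only to check that the reduced spectral radius $r(f)$ equals $\lambda(\varphi,\psi)$. This holds because the matrices $A_e,A_o$ built in Lemma \ref{thm:coefficients-sum-map-bouquet} have exactly the $\lambda_i$ with $\chi_i\ne0$ as their essential eigenvalues, with no cancellation between the even and odd parts.
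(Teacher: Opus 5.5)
Your argument is correct. It shares the paper's skeleton: discard the $\lambda_i$ of non-maximal modulus, then split on whether all angles $\theta_j$ are rational. In the irrational case, however, you use a genuinely different mechanism, and yours is the one that works in general.

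The paper's irrational case has three problems. It asserts that irrationality of the $\theta_i$, $i\in\mathcal{S}$, means they are rationally independent together with $1$, and applies Kronecker on the full torus $\mathbb{T}^s$. It puts the rational coordinates equal to $0$. It then claims the limit values of $h=|\sum\chi_i e^{2\pi\iota\xi_i}|$ fill $[0,m_{\mathcal S}]$. But relations such as $\theta_{i_2}=2\theta_{i_1}$ or $\theta_{i_2}=\theta_{i_1}+\tfrac12$ are not excluded. The coordinates $k\theta_j$, $j\notin\mathcal S$, vanish only along a residue class of $k$. And the minimum of $h_{\mathcal S}$ need not be $0$: the sequence $3+\epsilon^k+\ov{\epsilon}^k$ has limit set $[1,5]$.

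Your route needs no independence hypothesis: the orbit closure $H$ is a finite union of cosets of its identity component $T$; along each residue class mod $q=[H:T]$ the orbit is dense in a coset; and a continuous real function on a connected set has an interval as image. What it costs is the non-constancy check, which you only sketch. Write it out as follows:
\begin{itemize}
\item The character group of $H$ is $\Lambda=\langle\epsilon_1,\dots,\epsilon_n\rangle$, via $\chi\mapsto\chi(\theta)$.
\item The characters trivial on $T$ are exactly the torsion of $\Lambda$, so that torsion subgroup has order $q$. Hence $t\mapsto e^{2\pi\iota t_j}$ and $t\mapsto e^{2\pi\iota t_{j'}}$ agree on $T$ iff $\epsilon_j/\epsilon_{j'}$ is a root of unity.
\item Take a class $C$ for this relation whose members are not roots of unity. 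It exists because some $\epsilon_j$ with $\alpha_j\neq0$ is not a root of unity. Write $\epsilon_j=\epsilon_{j_0}\zeta_j$ with distinct $q$-th roots of unity $\zeta_j$.
\item The coefficient of the character attached to $C$ in $F$ on $k_0\theta+T$ is $\epsilon_{j_0}^{k_0}\sum_{j\in C}\alpha_j\zeta_j^{k_0}$. For $0\le k_0<q$ these cannot all vanish, by Vandermonde.
\item Linear independence of distinct characters on $T$ then makes $F$ non-constant on $k_0\theta+T$ for a suitable $k_0$.
\end{itemize}

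For real-valuedness, it is cleaner to say that every point of $F(k_0\theta+T)$ is a limit point of the real sequence $R(\varphi^k,\psi^k)/\lambda^k$, hence real. As written, your justification only gives $\operatorname{Im}a_k\to0$; that also suffices, by continuity and density of late orbit points.

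Your Babenko--Bogatyi fallback is also valid, and it is the route the paper mentions but does not use. The spectra of $A_e$ and $A_o$ are disjoint, since they come from disjoint sets of irreducible factors. The eigenvalue $1$ from $H_0$ cancels against the extra circle. So $r(f)=\lambda(\varphi,\psi)$, and case (i) of Theorem \ref{thm:BaBo-trichotomy} is excluded by $R\ge1$. This trades the explicit torus analysis for a citation.
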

\begin{proof}
  We factor every $\lambda_i$ for which $|\lambda_i| = \lambda(\varphi, \psi)$
  in \eqref{eq:coincidence-sum} as
  $$\lambda_i = \lambda(\varphi, \psi) \exp(2\pi\iota\theta_i),$$
  where $\iota^2 = -1$ is the imaginary unit.
  Then
  \[
    R(\varphi^k, \psi^k) = \lambda(\varphi, \psi)^k
      \left( \sum_{|\lambda_i| = \lambda(\varphi, \psi)} \chi_i \exp(2\pi\iota k \theta_i) \right)
      + \sum_{|\lambda_i| < \lambda(\varphi, \psi)} \chi_i \lambda_i^k
  \]
  and
  \[
    \left| \frac{R(\varphi^k, \psi^k)}{\lambda(\varphi, \psi)^k}
      - \sum_{|\lambda_i| = \lambda(\varphi, \psi)} \chi_i \exp(2\pi\iota k \theta_i) \right|
    \leq \sum_{|\lambda_i| < \lambda(\varphi, \psi)} |\chi_i|
      \cdot \left| \frac{\lambda_i}{\lambda(\varphi, \psi)} \right|^k.
  \]
  Right hand side goes to $0$ when $k \to \infty$ therefore the sequences
  \[
    \left\{ R(\varphi^k, \psi^k) \ / \ \lambda(\varphi, \psi)^k \right\}_{k=1}^\infty
    \mbox{\quad and \quad }
    \left\{ \sum_{|\lambda_i| = \lambda(\varphi, \psi)} \chi_i \exp(2\pi\iota k \theta_i) \right\}_{k=1}^\infty
  \]
  have the same asymptotic behavior.
  Now we claim that the structure of the limit points set of the latter sequence
  depends on rationality of all $\theta_i$.

  Assume that for all $i = 1, \dots, n(\varphi, \psi)$ we can write $\theta_i = p_i / q_i$ where $p_i, q_i \in \Z$.
  Put $q := \operatorname{lcm}(q_1, \dots, q_{n(\varphi, \psi)})$.
  Whenever $k$ is a multiplicity of $q$, all $\exp(2\pi\iota k \theta_i) = 1$.
  This is the case (i).

  Now assume that some of $\theta_i$ are irrational and denote the subset of their indices
  by $\mathcal{S} := \left\{ i_1, \dots, i_s \right\} \subset \{1, \dots, n(\varphi, \psi)\}$.
  We can express the assumption as linear independence
  \[
    \left( \sum_{i \in \mathcal{S}} \alpha_i \theta_i = \alpha, \quad \alpha_i, \alpha \in \Z \right) \qquad
    \Longrightarrow \qquad \forall_i \ \alpha_i = \alpha = 0.
  \]
  Then the Kronecker theorem (see \cite[Theorem VIII.6]{Cha}) states that the sequence \linebreak
  $\left\{ (k\theta_{i_1}, \dots, k\theta_{i_s}) \right\}_{k=1}^\infty$ is dense
  in the $s$-dimensional torus $\mathbb{T}^s$.

  Consider the continuous function
  \[
    h: \mathbb{T}^{n(\varphi, \psi)} \to [0, \infty), \qquad
    h(\xi_1, \dots, \xi_{n(\varphi, \psi)}) = \left| \sum_{i=1}^{n(\varphi, \psi)} \chi_i \exp(2\pi\iota \xi_i) \right|.
  \]
  We can consider the subtorus
  \[
    \mathbb{T}_{\mathcal{S}} := \left\{ (\xi_1, \dots, \xi_{n(\varphi, \psi)})
      \in \mathbb{T}^{n(\varphi, \psi)} \mid \xi_j = 0, \forall j \notin \mathcal{S} \right\}
  \]
  For $\mathbb{T}_{\mathcal{S}}$ is compact and the restriction
  $h_\mathcal{S} := \left. h \right|_{\mathbb{T}_\mathcal{S}}$
  is continuous, there exists the maximum $m_\mathcal{S} := \max_{\mathbb{T}_{\mathcal{S}}} h_\mathcal{S}(\xi)$.
  Observe that the closure of the image of the dense sequence contains the interval
  \[
    \overline{h_\mathcal{S}\big( \left\{ k\theta_{i_1}, \dots, k\theta_{i_s} \right\}_k \big)}
    \ \supset \ [0, m_\mathcal{S}].
  \]
  This is the case (ii).
\end{proof}

Let $f,g: N \to N$ be a tame pair of maps of a compact nilmanifold to itself
as discussed in Chapter \ref{sec:coincidence-nielsen}.
Theorem \ref{thm:nielsen-reidemeister} establishes the~equality
$R(f_\#, g_\#) = N(f,g)$ of Reidemeister coincidence numbers of the tame pair
of endomoprhism $f_\#, g_\#$ induced on the fundamental group of~$N$ and Nielsen coincidence numbers.
Using \eqref{eq:lambda-def}, if the Reidemeister coincidence zeta function $R_{f_\#,g_\#}(z)$
is rational we can define
\begin{align}\label{eq:lambda-nielsen}
  \lambda(f,g) := \lambda(f_\#, g_\#)
\end{align}
for the tame pair $f,g$.
Basing on Lemma \ref{thm:lambda-inverse-reidemeister}
we can interpret $\lambda(f,g)$ as the inverse of the radius of convergence
of~the~Nielsen coincidence zeta function $N_{f,g}(z)$.

Results of Theorem \ref{thm:main-result-nielsen-coincidence}
and Theorem \ref{thm:coincidence-reidemeister-asymptotic-sequence} let us formulate the following
\begin{theorem}
  Let $f,g: N \to N$ be a tame pair of maps of a compact nilmanifold to itself.
  Under assumptions of {\rm Theorem \ref{thm:main-result-nielsen-coincidence}
  (iii)} one of the two possibilities holds:
  \begin{enumerate}
    \item[(i)] The sequence $\{ N(f^k,g^k) \ / \ \lambda(f, g)^k \}_{k=1}^\infty$ has the same
      limit points set as a periodic sequence $\{ \sum_{j=1}^{n(\varphi, \psi)} \alpha_j \epsilon_j^k \}_{k=1}^\infty$
      where $\alpha_j \in \Z, \ \epsilon_j \in \C$ and $\epsilon_j^q = 1$ for some integer $q>0$.
    \item[(ii)] The set of limit points of the sequence $\{ N(f^k, g^k) \ / \ \lambda(f, g)^k \}_{k=1}^\infty$
      contains an interval.
  \end{enumerate}
\end{theorem}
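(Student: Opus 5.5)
The plan is to reduce the statement to Theorem \ref{thm:coincidence-reidemeister-asymptotic-sequence}, applied to the pair of induced endomorphisms $f_\#, g_\#$ of the fundamental group $G$ of $N$. By the hypotheses, $G$ is a finitely generated torsion-free nilpotent group, so in particular it has finite Pr\"ufer rank.

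\textbf{Step 1: tameness and equality of the sequences.} Tameness of $(f,g)$ means $R(f_\#^k, g_\#^k)<\infty$ for all $k$, so $(f_\#,g_\#)$ is a tame pair of endomorphisms. Theorem \ref{thm:nielsen-reidemeister}, applied to each iterate pair $(f^k,g^k)$, gives the equalities \eqref{eq:nielsen=reidemeister}:
\[
N(f^k,g^k)=R(f_\#^k,g_\#^k) \quad \text{for every } k .
\]
Hence the two sequences coincide term by term, and $N_{f,g}(z)\equiv R_{f_\#,g_\#}(z)$.

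\textbf{Step 2: rationality.} Under the assumptions of Theorem \ref{thm:main-result-nielsen-coincidence}(iii) (simultaneous triangularizability and $|\xi_{k,i}|\neq|\eta_{k,i}|$), part (iii) of that theorem shows $N_{f,g}(z)$ is rational. By Step 1, $R_{f_\#,g_\#}(z)$ is therefore rational as well. Consequently $\lambda(f,g)=\lambda(f_\#,g_\#)$ is well defined via \eqref{eq:lambda-def} and \eqref{eq:lambda-nielsen}, and the representation \eqref{eq:coincidence-sum} holds by Lemma \ref{thm:rational-zeta-coefficients-sum}.

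\textbf{Step 3: conclusion.} Theorem \ref{thm:coincidence-reidemeister-asymptotic-sequence}, applied to $(f_\#,g_\#)$, gives the dichotomy for the sequence $\{R(f_\#^k,g_\#^k)/\lambda(f_\#,g_\#)^k\}$. By Steps 1–2 this sequence is exactly $\{N(f^k,g^k)/\lambda(f,g)^k\}$, which yields (i) or (ii). The integer $n(\varphi,\psi)$ in (i) is read as $n(f_\#,g_\#)$.

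\textbf{Main obstacle.} The only delicate point is checking that rationality, which is proved in Theorem \ref{thm:main-result-nielsen-coincidence}(iii) for the Nielsen zeta function, legitimately transfers to the Reidemeister zeta function. This transfer requires the termwise equality $N(f^k,g^k)=R(f^k,g^k)$ for \emph{all} $k$, which is exactly where tameness enters through condition (iii) of Theorem \ref{thm:nielsen-reidemeister}. Everything else is a direct substitution.
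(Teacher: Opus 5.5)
Your proposal is correct and follows the paper's proof essentially verbatim: tameness gives $N(f^k,g^k)=R(f_\#^k,g_\#^k)$ for all $k$ via Theorem~\ref{thm:nielsen-reidemeister}, so $N_{f,g}(z)\equiv R_{f_\#,g_\#}(z)$, which is rational by Theorem~\ref{thm:main-result-nielsen-coincidence}(iii), and Theorem~\ref{thm:coincidence-reidemeister-asymptotic-sequence} applied to $(f_\#,g_\#)$ then yields the dichotomy for the identical sequence. Your extra remarks, that finite generation implies finite Pr\"ufer rank and that $n(\varphi,\psi)$ should be read as $n(f_\#,g_\#)$, make explicit details the paper leaves implicit.
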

\begin{proof}
  Because the pair $f,g$ is tame, we have the equality $N(f^k,g^k) = R(f_\#^k,g_\#^k)$ for all iterations $k \in \N$
  and equality of zeta functions $N_{f,g}(z) \equiv R_{f_\#,g_\#}(z)$ as well.
  Under assumptions of {\rm Theorem \ref{thm:main-result-nielsen-coincidence}
  (iii)} both zeta functions are rational.
  We can apply Theorem~\ref{thm:coincidence-reidemeister-asymptotic-sequence}
  to the sequence $\{ R(f_\#^k,g_\#^k) \ / \ \lambda(f_\#,g_\#)^k\}_{k=1}^\infty$
  which in our case is identical to the sequence $\{ N(f^k,g^k) \ / \ \lambda(f, g)^k \}_{k=1}^\infty$.
\end{proof}

\begin{remark}
  Consider a pair $f,g$ such that $N(f^k,g^k)=0$ for all iterations $k \in \N$.
  From {\rm Theorem \ref{thm:nielsen-reidemeister}} we can see that the pair is not tame
  and the Reidemeister coincidence zeta function $R_{f_\#,g_\#}(z)$ is not defined.
  On the other hand, the Nielsen coincidence zeta function $N_{f,g}(z) \equiv 1$ is constant, hence rational.
  The radius of convergence is infinite therefore it makes sense to set $\lambda(f,g)=0$
  according to our intepretation of \eqref{eq:lambda-nielsen} as the inverse of the radius of convergence.
  This situation is then an analogue of the third possibility for Lefschetz numbers
  in {\rm Theorem \ref{thm:BaBo-trichotomy} (i)} and in {\rm \cite[Theorem 4.1 (1)]{FelsLee}}
  for Nielsen numbers of maps of infra-solvmanifolds of type {\rm (R)}.
\end{remark}

\end{document}